\title{Ordered structures and large conjugacy classes}
\theoremstyle{plain}
\newtheorem{theorem}{Theorem}[section]
\newtheorem{lemma}[theorem]{Lemma}
\newtheorem{corollary}[theorem]{\bf Corollary}
\newtheorem{remark}[theorem]{\bf Remark}
\newtheorem{example}[theorem]{\bf Example}
\newtheorem{proposition}[theorem]{\bf Proposition}
\newcommand \SB{\mathcal{SB}}
\newcommand{\fra}{Fra\"\i ss\'e }
\newcommand \Aut{ \mbox{Aut}}
\newcommand \Cone{ \mbox{Cone}}
\newcommand \SI{ S_\infty}
\newcommand \dom{{\rm{ dom}}}
\newcommand \rng{{\rm{ rng}}}
\newcommand \age{{\rm{ Age}}}
\newcommand \aut{{\rm{ Aut}}}
\newcommand \meet{{\rm{ meet}}}
\newcommand \htt{{\rm{ ht}}}
\def\f{\mathcal{F}}
\def\p{\mathcal{P}}
\def\rest{\restriction}
\newcommand \supp{{\rm{ supp}}}
\newcommand \zdef{{\rm{ def}}}
\newcommand \diam{{\rm{ diam}}}
\newcommand \qftp{{\rm{ qftp}}}
\def\f{\mathcal{F}}
\def\rest{\restriction}
\newcommand \NN{\mathbb{N}}
\newcommand \ZZ{\mathbb{Z}}
\newcommand \OOc{\mathcal{O}}
\newcommand \KK{\mathcal{K}}
\def\f{\mathcal{F}}
\def\rest{\restriction}
\newcommand\bP{\mathcal{P}}
\author[A. Kwiatkowska]{Aleksandra Kwiatkowska}
\address{Institut f\"{u}r Mathematische Logik und Grundlagenforschung, Universit\"{a}t  M\"{u}nster,  
Einsteinstrasse 62,
48149  M\"{u}nster,
Germany {\bf{and}} 
Instytut Matematyczny, Uniwersytet Wroc{\l}awski,  pl. Grunwaldzki 2/4, 50-384 Wroc{\l}aw, Poland}
\email{kwiatkoa@uni-muenster.de}
\author[M. Malicki]{Maciej Malicki}
\address{Department of Mathematics and Mathematical Economics, Warsaw School of Economics, al. Niepodleg{\l}o\'sci 162, 02-554, Warsaw, Poland}
\email{mamalicki@gmail.com}
\thanks{The first 
named author was supported by Narodowe Centrum Nauki grant 2016/23/D/ST1/01097.}
\keywords{Polish non-archimedean groups, ample generics, extreme amenability}
\subjclass[2010]{03E15, 54H11}
\begin{document}
	
\maketitle

\begin{abstract}
This article is a contribution to the following problem: does there exist a Polish non-archimedean group (equivalently: automorphism group of a \fra limit) that is extremely amenable, and has ample generics. As \fra limits whose  automorphism groups  are extremely amenable must be ordered, i.e., equipped with a linear ordering, we focus on ordered \fra limits.  We prove that automorphism groups of the universal ordered boron tree, and the universal ordered poset have a comeager conjugacy class but no comeager $2$-dimensional diagonal conjugacy class. We also provide general conditions implying that there is no comeager conjugacy class, 
comeager $2$-dimensional diagonal conjugacy class or non-meager $2$-dimensional topological similarity class in the automorphism group of an ordered \fra limit. We provide a number of applications of these results.
\end{abstract}

\section{Introduction}
This article is a contribution to the following question: does there exist a  Polish non-archimedean group, i.e., a Polish group with a neighborhood basis at the identity consisting of open subgroups, that simultaneously satisfies two frequently studied properties: it is extremely amenable, and it has ample generics.

A Polish group $G$ is {\em extremely amenable} if every continuous action of $G$ on a compact space has a fixed point. The group $G$ has {\em ample generics} if, for every $n \geq 1$, there exists an \emph{$n$-dimensional diagonal conjugacy class} in $G$, i.e., a set of the form
\[ \{ (gg_1g^{-1}, \ldots,gg_ng^{-1}) \in G^n\colon g \in G \}, \]
for some $g_1, \ldots, g_n \in G$, which is comeager in $G^n$. Such a group admits only one Polish group topology, and all of its (abstract) homomorphisms into separable groups are continuous (Kechris-Rosendal \cite{KR}.) In particular, \emph{every} action by homeomorphisms of an extremely amenable group with ample generics on a compact separable space has a fixed point.

It is known that there exist Polish groups sharing both of these features. Pestov-Schneider \cite{PS} proved that, for any Polish group $G$, the group $L_0(G)$, i.e., the group of measurable functions with values in $G$, is extremely amenable, provided that $G$ is amenable, and Ka\"{i}chouh-Le Ma\^{i}tre \cite{KLM} proved that $L_0(G)$ has ample generics whenever $G$ has. As $\SI$, i.e., the group of all permutations of natural numbers, is amenable, and has ample generics, $L_0(\SI)$ is extremely amenable and it has ample generics. However, it is still an open problem whether there are such groups in the non-archimedean realm.

Let $M$ be a first-order countable structure.
It is well known that every Polish non-archimedean group is isomorphic to the automorphism group $\Aut(M)$ of a structure $M$ (i.e., a set equipped with relations and functions) equal to the \fra limit of a \fra class $\f$ of finite structures  (see the next section for precise definitions of notions used in the introduction.) 
The group $\Aut(M)$ naturally acts 
on the compact space of linear orderings of  $M$, viewed as a subspace of   $\{0,1\}^{M\times M}$. 
This implies that when $\aut(M)$ is
extremely amenable, then there is a linear ordering of $M$ preserved by $\Aut(M)$, see also \cite{KPT}.
Therefore if $\Aut(M)$ is extremely amenable, we can actually  assume that $\f$ is an  order class, i.e., that each structure in $\f$ is  equipped with a linear ordering $<$  of its elements. Thus, we can pose a more general question: does there exist a \fra limit $M$ of an order class $\f$ such that the automorphism group $\Aut(M)$ has ample generics. This article gives some partial answers as to when such a situation cannot happen.

Curiously enough, there are no known examples of Polish groups that do not have ample generics but they have a comeager diagonal conjugacy class for some $n \geq 2$. Thus, our article can be also viewed as a study of the question whether comeager diagonal conjugacy classes resemble weak mixing in topological dynamics, which implies weak mixing of all orders (see \cite{F}.)

One of our main tools is a theorem of Kechris-Rosendal, connecting the structure of diagonal conjugacy classes in the automorphism group of the \fra limit $M$ of a \fra class $\f$ with the joint embedding property (JEP), and the weak amalgamation property (WAP) in classes $\f_n$ of $n$-tuples of partial automorphisms of elements of $\f$.
 They prove (see also \cite{I}) that $\Aut(M)$ has a comeager $n$-dimensional diagonal conjugacy class if and only if $\f_n$ has JEP and WAP. Thus, showing that $\Aut(M)$ does not have a comeager $n$-dimensional diagonal conjugacy class reduces to verifying that $\f_n$ has no JEP or WAP.

First, we study the one-dimensional case.  A class of structures $\f$ has the {\em $1$-Hrushovski property} if every partial automorphism of an $A\in\f$ can be extended to an automorphism of some $B\in\f$. Clearly, if $\f$ is an order class of finite structures, then $\f$ does not have the $1$-Hrushovski property because in this case non-trivial orbits are necessarily infinite. We introduce the notions of strong splitting and always strong splitting in a \fra class, which capture the idea of `flexible' amalgamation. Then we prove (Theorem \ref{th:NoComeager1}) that $\f_1$ has no WAP, provided that one of following holds: $\f$ is a \fra class that does not have the $1$-Hrushovski property, and it always strongly splits, or $\f$ is a full order expansion of $\mathcal{K}$ (i.e., $\f$ is the class of \emph{all} linear orderings on elements of $\KK$), where $\mathcal{K}$ is a \fra class that strongly splits. On the other hand, we show (Theorem \ref{boron}) that the class $\SB_1$ of partial automorphisms of ordered boron trees, and (Theorem \ref{wapposet}) the class $\mathcal{P}_1$ of partial automorphisms of ordered partial orders, have CAP, so, in particular, they have WAP. It seems that these are, except for $\Aut(\mathbbm{Q})$ (see Truss  \cite{T}), the only known  order classes such that the automorphism group of the limit has a comeager conjugacy class.
We also give (Theorem \ref{th:QU}) a short and elementary proof of a result of Slutsky \cite{Sl} who showed that the class $(\mathcal{QU}_\prec)_1$ of partial automorphisms of ordered metric spaces with rational distances has no WAP.   

Next, we turn to the two-dimensional case. For a \fra class $\f$, we formulate a simple but efficient condition (Proposition \ref{nowap}) implying that $\f_2$ has no WAP, and we verify it for a number of cases such as 
precompact Ramsey expansions of ultrahomogeneous directed graphs, in particular for $\mathcal{P}_2$.   Using a similar approach, we also show (Theorem \ref{nowapbor}) that $\SB_2$ does not have WAP. Then we investigate topological similarity classes. For a Polish group $G$, $n \geq 1$, and an $n$-tuple $(f_1, \ldots, f_n)$ in $G$, the \emph{$n$-dimensional topological similarity class} of $(f_1, \ldots, f_n)$ is the family of all $n$-tuples $(g_1, \ldots, g_n)$ in $G$ such that the mapping $f_i \mapsto g_i$ (uniquely) extends to a topological group isomorphism. Clearly, this is a generalization of the notion of the diagonal conjugacy class, and it is still not known whether there exists a Polish group $G$ such that for some $n \geq 2$ there is  a non-meager $n$-dimensional topological similarity class, but all $n$-dimensional diagonal conjugacy classes are meager. Generalizing methods and results of Slutsky \cite{Sl}, we show (Theorem \ref{th:MeagerSim}) that if $M$ is the \fra limit of a \fra class $\f$ that is a full order expansion and that satisfies certain additional conditions, then all $2$-dimensional topological similarity classes in $\Aut(M)$ are meager. In particular, this is true if $\mathcal{K}$ is a class with free amalgamation, or the class of ordered tournaments (Theorem \ref{th:MeagerSimFree}.)   
 
\section{Definitions}

A topological group is {\em Polish} if its topology is separable, and completely metrizable. A Polish group is {\em non-archimedean} if it has a neighborhood basis at the identity consisting of open subgroups, or, equivalently, it is topologically isomorphic to the automorphism group $\Aut(M)$ of a countable structure, equipped with the product topology (i.e., $\Aut(M) \subseteq M^M$, where $M$ is regarded as a discrete space.)
 
By a structure we always mean a relational structure (i.e., a set equipped with relations), and we consider only classes of finite structures.  Let $A$ be a structure, and let $B,C \subseteq A$. By $\qftp_A(B/C)$, we denote the quantifier-free type of $B$ over $C$ in $A$. Let $p$ be a partial automorphism of~$A$. We write $\zdef(p)=\dom(p) \cup \rng(p)$, and $\supp(p)=\{x \in \zdef(p)\colon p(x) \neq x \}$. An \emph{orbit} of $p$ is a maximal subset $\OOc \subseteq A$ that can be enumerated into $\{a_0, \ldots, a_m\}$ so that $p(a_i)=a_{i+1}$ for $i<m$. If $p(a_m)=a_0$, we say that $\OOc$ is a \emph{cyclic orbit}. 
An orbit is {\em trivial} if it consists of a single element.

Let $\f$ be a class of structures in a given signature. We say that $\f$ has \emph{JEP} (the joint embedding property) if any two $A,B \in \f$ can be embedded in a single $C \in \f$. We say that $\f$ has {\em AP}  (the amalgamation property) if for every $A, B,C \in\f$ and embeddings $\alpha\colon A\to B$ and $\beta\colon A\to C$ there is $D\in\f$ and embeddings $\gamma\colon B\to D$, $\delta\colon C\to D$ such that $\gamma\circ\alpha=\delta\circ\beta$. In that case, we say that $B$ and $C$ {\em amalgamate } over $A$. We say that $\f$ has {\em SAP}  (the strong amalgamation property) if, additionally, $\gamma[B] \cap \delta[C]=\gamma\circ\alpha[A]$. We say that $\f$ has CAP (the cofinal amalgamation property) if there is a cofinal (with respect to inclusion) subclass of $\f$ with AP. 
We say that $\f$ has {\em WAP} (the weak amalgamation property) if for every $A\in\f$ there is $A'\in\f$ and an embedding $\phi\colon A\to A'$ such that for every $B,C\in\f$ and embeddings $\alpha\colon A'\to B$, $\beta\colon A'\to C$ there is $D\in\f$ and embeddings $\gamma\colon B\to D$, $\delta\colon C\to D$ such that $\gamma\circ\alpha\circ\phi=\delta\circ\beta\circ\phi$. Clearly, if $\f$ has CAP, then it has 
WAP. Actually, in the definition of AP (CAP and WAP), it suffices to consider $B,C$ such that $B \cap C=A$ ($B \cap C=A'$), and only trivial embeddings, i.e., inclusions.  
 
A class of finite structures $\f$ is  a \emph{\fra class}, if it is countable (up to isomorphism), closed under isomorphism, closed under taking substructures, and has JEP and AP.  
A countable  first-order structure $M$ is {\em ultrahomogeneous} if every isomorphism between finite substructures of $M$ can be extended to an automorphism of the whole $M$. Then  $\rm{Age}(M)$ -- the class  of all finite substructures embeddable in $M$ -- is a {\em \fra\ class}. Conversly, by the classical theorem due to Fra\"\i ss\'e,  for every \fra\ class  $\mathcal{F}$ of finite structures, there is a unique up to isomorphism countable ultrahomogeneous structure $M$ such that  $\mathcal{F}=\age(M)$. 
We call this  $M$ the  {\em Fra\"{i}ss\'{e} limit} of $\mathcal{F}$.

A \fra class $\f$ is called an \emph{order class} if its signature includes a binary relation defining a linear ordering on each element of $\f$. If $\f$ is an order class, $\f^-$ denotes the reduct of  $\f$ obtained by removing the order relation $<$ from the signature of $\f$. We call $\f$ a \emph{full order expansion} if $\f=\f^-* \mathcal{LO}$, i.e., it is a class of elements of the form $(A,<)$, where $A \in \f^-$, and $<$ is \emph{any} linear ordering of $A$. We will frequently use the observation that if $M$ is the \fra limit of a full order expansion with SAP, then for any finite $A \subseteq B \subset M$, and any $<$-interval $I$ in $M$, there exists $C \subseteq I$ that is isomorphic to $B$ via an isomorphism that pointwise fixes $A$. 


We will be mostly interested in classes of tuples of partial automorphisms of structures coming from a given class $\f$. Formally, for $n \geq 1$, denote
\[ \f_n=\{(A,p_1,\ldots,p_n)\colon A\in\f, \ p_i \text{ is a partial automorphism of } A, \ i \leq n \}.\]
Often, we will think of elements of $\f_n$ simply as tuples of partial automorphisms. Then $(p_1,\ldots,p_n)$ is identified with $(\bigcup_i \zdef(p_i),p_1, \ldots, p_n)$. A map  $\phi\colon (A,p_1,\ldots,p_n)\to (B,q_1,\ldots,q_n)$ will be called an {\em embedding} if 
it is an embedding of $A$ into $B$, and  $\phi\circ p_i =   q_i \circ\phi$ for $i \leq n$. Using this notion of embedding,  we can also define properties JEP, AP, CAP, and WAP for classes $\f_n$. Then we have:

\begin{theorem}[Kechris-Rosendal \cite{KR}]\label{kechros}
Let $\f$ be a \fra class, and let $M$ be the \fra limit of $\f$.
\begin{enumerate}
\item There exists a dense diagonal $n$-conjugacy class in $\aut(M)$ iff $\f_n$ has JEP, 
\item there exists a comeager diagonal $n$-conjugacy class in $\aut(M)$ iff $\f_n$ has JEP and WAP.
\end{enumerate}
\end{theorem}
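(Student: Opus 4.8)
The plan is to translate the topological dynamics of the diagonal conjugation action of $G := \Aut(M)$ on $X := \Aut(M)^n$ into the combinatorics of $\f_n$, and then apply the standard characterizations of dense and comeager orbits. To set up the dictionary, for a tuple $\bar p = (p_1,\dots,p_n)$ of finite partial automorphisms of $M$ write $A_{\bar p} = \bigcup_i \zdef(p_i)$ and $U_{\bar p} = \{\bar g \in X : g_i \supseteq p_i \text{ for all } i\le n\}$. The sets $U_{\bar p}$ form a basis of $X$ consisting of nonempty sets, and $U_{\bar q}\subseteq U_{\bar p}$ iff $p_i \subseteq q_i$ for all $i$, so the poset of basic open sets is anti-isomorphic to the poset of conditions $(A_{\bar p},\bar p)$ under extension. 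Since $\f = \age(M)$ and $M$ is ultrahomogeneous, every element of $\f_n$ is realized as some $(A_{\bar p},\bar p)$ with $A_{\bar p}\subseteq M$, any two realizations of the same isomorphism type are related by an element of $G$, and, crucially, ultrahomogeneity lets us extend each finite partial automorphism $p_i$ to a full automorphism $\hat p_i\in G$, so every condition is the restriction of an actual point of $X$. Finally the action respects the dictionary: $g\cdot U_{\bar p} = U_{g\cdot\bar p}$, where $g\cdot\bar p = (gp_1g^{-1},\dots,gp_ng^{-1})$ is a conjugate realization of the same type.

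Next I would prove (1). Since $X$ is Polish, the set of points with dense orbit equals $\bigcap_m \bigcup_{g\in G} \{\bar x : g\cdot\bar x\in U_{\bar p^{(m)}}\}$ for a countable list $\{U_{\bar p^{(m)}}\}$ of basic sets, hence is $G_\delta$, and it is comeager (so in particular some orbit is dense) exactly when the action is topologically transitive, i.e. when for all $\bar p,\bar q$ there is $g\in G$ with $g\cdot U_{\bar p}\cap U_{\bar q}\ne\emptyset$. Unwinding the dictionary, $g\cdot U_{\bar p}\cap U_{\bar q}\ne\emptyset$ says precisely that $g\cdot\bar p$ and $\bar q$ have a common extension to a point of $X$, which, restricting that point to a large enough finite substructure, is the same as saying that $(A_{\bar p},\bar p)$ and $(A_{\bar q},\bar q)$ embed into a single member of $\f_n$. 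Conversely a joint embedding, realized in $M$ and extended to global automorphisms via ultrahomogeneity, produces the required $g$. Hence topological transitivity is equivalent to JEP of $\f_n$, proving (1).

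For (2) I would argue both implications by a back-and-forth tied to WAP. For $(\Leftarrow)$, assume JEP and WAP. JEP already gives a comeager $G_\delta$ set $D$ of points with dense orbit; it remains to find a single comeager orbit, for which it suffices to produce a comeager $D^*\subseteq D$ any two of whose points are conjugate, since then $D^*$ lies in one orbit. Given two generic points $\bar g,\bar g'$, I would build an automorphism $h = \bigcup_k s_k$ of $M$ by a back-and-forth over an enumeration of $M$, keeping each $s_k$ a finite partial isomorphism that is an isomorphism of conditions $(A_k,\bar g\restriction A_k)\to(A_k',\bar g'\restriction A_k')$ in $\f_n$; then $hg_i = g_i'h$, so $h$ conjugates $\bar g$ to $\bar g'$. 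The extension step, enlarging $\dom s_k$ to contain the next point of $M$, is exactly an amalgamation problem over the current condition, and this is where WAP enters: it guarantees that after replacing the current condition by the prepared extension $A'$ it provides, any two further extensions amalgamate, while genericity of $\bar g,\bar g'$ ensures the construction passes through such prepared conditions cofinally, so each step succeeds. For $(\Rightarrow)$, let $\Orb$ be the comeager orbit; it is dense, so JEP holds by (1). To extract WAP, fix $(A,\bar p)\in\f_n$ realized in $M$; since $\Orb$ meets $U_{\bar p}$, a sufficiently generic $\bar g\in\Orb\cap U_{\bar p}$ furnishes a finite $A'$ with $A\subseteq A'$ and witness $\bar p' := \bar g\restriction A'$. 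For any two extensions $(B,\bar q_1),(C,\bar q_2)$ of $(A',\bar p')$, the nonempty open sets $U_{\bar q_1},U_{\bar q_2}$ meet the comeager orbit; choosing $h_1,h_2\in G$ conjugating $\bar g$ into them, the element $h_2h_1^{-1}$ witnesses a common extension of $\bar q_1,\bar q_2$, i.e. an amalgam of $(B,\bar q_1)$ and $(C,\bar q_2)$, which by the choice of $A'$ fixes the image of $A$ — precisely WAP.

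The main obstacle I expect is the $(\Leftarrow)$ direction of (2): converting WAP, which only guarantees amalgamation over the small structure $A$ after passing through the prepared extension $A'$, into the comeagerness (not merely density) of a single orbit. This requires interleaving the WAP preparation step correctly with the enumeration of $M$, verifying that the set of generic points for which the construction succeeds is genuinely comeager, and checking that the finite partial isomorphisms built are honest isomorphisms of $n$-tuple conditions so that their union is a global automorphism intertwining the whole tuples. Dually, in $(\Rightarrow)$ the delicate point is ensuring that the amalgam produced from comeagerness of $\Orb$ restricts to the identity on the copy of $A$ rather than only on $\emptyset$, i.e. that one really recovers WAP and not just JEP; this is exactly what pins down the correct choice of the witness $A'$, and it will require using that $\Orb\cap U_{\bar p'}$ is comeager in $U_{\bar p'}$ together with homogeneity to arrange the conjugating elements to fix $A$ pointwise.
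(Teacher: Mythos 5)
The first thing to note is that the paper contains no proof of Theorem \ref{kechros}: it is quoted from Kechris--Rosendal \cite{KR} (with Ivanov \cite{I} also cited), so your proposal can only be judged against the standard arguments from those sources. Measured that way, your dictionary between basic open sets $U_{\bar p}$ of $\Aut(M)^n$ and conditions in $\f_n$, and your proof of part (1) via topological transitivity, are correct and essentially complete. Part (2), however, has genuine gaps in both directions, located exactly at the two points you flag as obstacles; flagging them does not fill them, and in the $(\Rightarrow)$ case the repair you suggest would not work. Concretely: from $\bar g\in\Orb\cap U_{\bar p}$ and two extensions $(B,\bar q_1),(C,\bar q_2)$ of the hoped-for witness, comeagerness of $\Orb$ gives $h_1,h_2\in G$ with $h_i\bar g h_i^{-1}\in U_{\bar q_i}$, and $h_2h_1^{-1}$ then yields a common extension of $(B,\bar q_1)$ and $(C,\bar q_2)$ --- but over nothing: there is no reason for $h_2h_1^{-1}$ to fix $A$ pointwise, so what you recover is a ``free'' joint-embedding statement, not WAP. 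Your proposed fix (that $\Orb\cap U_{\bar p'}$ is comeager in $U_{\bar p'}$, ``together with homogeneity'') does not help, because local comeagerness of the full $G$-orbit still only produces conjugators that are arbitrary elements of $G$. The missing idea is to pass to the pointwise stabilizer $G_A\leq G$: it is an open subgroup, hence of countable index, so writing $\Orb$ as a countable union of translates of $G_A\cdot\bar g$ shows that $G_A\cdot\bar g$ is non-meager; being analytic it has the Baire property, hence is comeager in some basic open $U_{\bar s}$, and $U_{\bar s}\subseteq U_{\bar p}$ follows since $G_A\cdot\bar g\subseteq U_{\bar p}$. Taking the WAP witness to contain $\bar s$, the conjugators $h_1,h_2$ can then be chosen inside $G_A$, and $h_2h_1^{-1}\in G_A$ fixes $A$ pointwise --- this, and nothing weaker, is what makes the amalgamating embeddings agree on $A$.

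In $(\Leftarrow)$ your plan (a comeager set $D^*$ of pairwise conjugate tuples) is the standard one, but its mathematical content is deferred rather than supplied. You never define $D^*$, and ``passes through prepared conditions cofinally'' is not a sufficient genericity notion: one needs both (i) for every finite $A\subseteq M$ there is a finite $B\supseteq A$ such that any two extensions of $(B,\bar g\rest B)$ in $\f_n$ amalgamate by embeddings agreeing pointwise on $A$, and (ii) for every such pair $(A,B)$, every extension of $(B,\bar g\rest B)$ in $\f_n$ embeds into $(M,\bar g)$ by an embedding fixing $A$ pointwise. Showing that (i) and (ii) define comeager sets is where WAP is actually consumed: density of (i) uses that a witness obtained by applying WAP to the whole condition $(\zdef(\bar p),\bar p)$ is in particular a witness over the subset $A$, and that any extension of a witness is again a witness; density of (ii) uses amalgamation over prepared conditions plus ultrahomogeneity to plant a copy of the given extension inside an arbitrary $U_{\bar q}\subseteq U_{\bar t}$. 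Finally, in the back-and-forth the embeddings provided by (ii) fix only the base $A$, not the witness $B$, so the partial conjugating maps $s_k$ stabilize only one level down the nested sequence $A_0\subseteq B_0\subseteq A_1\subseteq B_1\subseteq\cdots$; without this bookkeeping the union $\bigcup_k s_k$ need not even be a well-defined map. These verifications are not routine add-ons --- they are the proof of the hard direction, and they are absent from the proposal.
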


In particular, it follows that if $\f_n$ has JEP but no WAP, then $\Aut(M)$ has meager $n$-dimensional diagonal conjugacy classes.

Let $F_n= F_n(s_1,\ldots,s_n)$ denote the free group on $n$ generators $s_1,\ldots,s_n$. For a word
$w\in  F_n$, and an $n$-tuple $\bar{f}=(f_1,\ldots, f_n)$ in $G$, the evaluation $w(\bar{f})$ denotes the element of $G$ obtained from $w$
by substituting $f_i$ for $s_i$, and performing the group operations on the resulting sequence.
By a word, we will always mean a reduced word.

\section{The one-dimensional case. Conjugacy classes}

\subsection{A condition that implies the failure of WAP}\label{dim1nowap}
Recall that a family $\f$  of finite structures in a given signature has the {\em  Hrushovski property} if for every 
$n\in\mathbb{N}$, $A\in\f$ and a tuple $(f_1,\ldots,f_n)$ of partial automorphisms of~$A$, there exists $B\in \f$ such that $A\subseteq B$, and every $f_i$ can be extended to an automorphism of $B$. 
We say that $\f$ has the {\em $n$-Hrushovski property} if the above holds for a given $n$.

In \cite[Theorem 4.7]{KwMa}, we proved the following trichotomy.

\begin{theorem}\label{47}
	\label{th:Hrushovski}
	Let $M$ be a \fra limit of a \fra family $\f$ such that algebraic closures of finite subsets of $M$ are finite. Then one of the following holds:
	\begin{enumerate}
		\item $\f$ has the Hrushovski property,
		\item $\f$ does not have the $1$-Hrushovski property,
		\item there exists $n$ such that none of $n$-dimensional topological similarity classes in $\Aut(M)$ is comeager. In particular, $\Aut(M)$ does not have ample generics.
	\end{enumerate}
\end{theorem}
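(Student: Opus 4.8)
The plan is to prove the contrapositive of the only nontrivial implication: assuming that (2) fails, i.e.\ that $\f$ has the $1$-Hrushovski property, and that (1) fails, i.e.\ that $\f$ does not have the full Hrushovski property, I would exhibit a witnessing dimension $n$ for which no $n$-dimensional topological similarity class in $\Aut(M)$ is comeager. First I would fix $n\ge 2$ minimal such that the $n$-Hrushovski property fails; minimality is not cosmetic, as it guarantees that the $(n-1)$-Hrushovski property still holds, so that \emph{every} proper subtuple of the obstruction can be completed inside a finite structure. Then I would fix a witness $(A;p_1,\dots,p_n)$, with $A\in\f$ and $p_1,\dots,p_n$ partial automorphisms of $A$ that admit no simultaneous extension to automorphisms of a single finite $B\in\f$ with $A\subseteq B$, while any $n-1$ of them do.

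Next I would transport this configuration into the limit. Using the $1$-Hrushovski property together with ultrahomogeneity of $M$, each $p_i$ extends to a full automorphism of $M$, so the set $U\subseteq\Aut(M)^n$ of tuples $\bar f=(f_1,\dots,f_n)$ with each $f_i$ extending $p_i$ is a nonempty open set (a product of cosets of pointwise stabilizers). The crucial elementary observation is that for every $\bar f\in U$ the orbit $\gen{f_1,\dots,f_n}\cdot A$ is \emph{infinite}: were it finite it would be a finite invariant substructure of $M$, hence an element of $\f$ on which each $f_i$ restricts to an automorphism extending $p_i$, i.e.\ exactly the forbidden joint completion. Since any topological similarity class that is comeager in $\Aut(M)^n$ is comeager in the open set $U$, and since at most one class can be comeager, it suffices to exhibit two disjoint topological similarity classes, each non-meager in $U$: the comeager class (if any) could equal at most one of them, leaving the other disjoint from a comeager set yet non-meager, a contradiction.

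Accordingly, the technical heart is to define a topological-similarity invariant $I$ on $U$ detecting the combinatorial shape of the forced infinite orbit — encoded in the $F_n$-action on the coset tree of $\overline{\gen{f_1,\dots,f_n}}$ by open subgroups — and to run a Baire-category argument showing $I$ is non-constant on a non-meager set. Here I would use the rigid backbone supplied by the $(n-1)$-Hrushovski property: realize $p_1,\dots,p_{n-1}$ through a finite completion, so that on a controlled finite region they have finite orbits, and then analyze the ways the last generator $p_n$ can be threaded through this backbone without ever closing up. A back-and-forth genericity argument, using amalgamation in $\f$ to perturb a generic $\bar f\in U$ within $U$, should show that at least two values of $I$ (ideally a perfect set of values) are each realized on a non-meager set. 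Since $I$ is constant on topological similarity classes, no such class is comeager in $U$, hence none is comeager in $\Aut(M)^n$. Finally, since a comeager diagonal conjugacy class at dimension $n$ would be contained in a comeager $n$-dimensional topological similarity class, this rules out ample generics, giving the ``in particular'' clause.

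I expect the main obstacle to be the invariant $I$ itself. One must verify that it is invariant under \emph{topological similarity}, which is strictly coarser than conjugacy, so $I$ may not refer to the action on $M$ directly but only to the isomorphism type of the coset tree with its marked generators; and one must show, through a delicate amalgamation-driven perturbation that stays inside $U$, that distinct values of $I$ occupy non-meager sets. Translating the purely combinatorial statement ``no joint finite completion'' into a category-stable, genuinely topological invariant, while keeping the shape-changing perturbations available densely, is where the real work lies.
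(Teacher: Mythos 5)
Note first that the paper does not actually prove this statement: it is quoted verbatim from \cite[Theorem 4.7]{KwMa} (``In [KwMa, Theorem 4.7] we proved the following trichotomy''), so your proposal has to be measured against that proof and against what the present paper imports from \cite{KwMa}.

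Your opening moves are correct and coincide with the real argument: pass to the minimal $n\geq 2$ at which the $n$-Hrushovski property fails, fix a witness $(A;p_1,\ldots,p_n)$, observe that the set $U$ of tuples of automorphisms extending $(p_1,\ldots,p_n)$ is nonempty open (ultrahomogeneity alone gives this; the $1$-Hrushovski property is not needed for it), and note that every $\bar f\in U$ generates a group with an infinite orbit, since a finite invariant set would be a finite substructure in $\f$ giving the forbidden joint completion. But after this point your text contains no proof: the invariant $I$ is never defined, its invariance under topological similarity is never verified, and the non-meagerness of two of its fibers is never established --- you explicitly defer exactly this part (``where the real work lies''), and that deferred part \emph{is} the theorem. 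A symptom of the problem is that your argument never uses the hypothesis that algebraic closures of finite subsets of $M$ are finite; everything you actually carry out is valid without it, so the entire content of the statement is concentrated in the step you left open.

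Moreover, the plan for that step is misdirected, in two ways. First, the ``forced infinite orbit'' cannot by itself drive the conclusion: in $\Aut(\mathbb{Q})$ (where acl is trivial) a fixed-point-free automorphism has an infinite orbit and generates a discrete copy of $\mathbb{Z}$, yet its $1$-dimensional topological similarity class is \emph{comeager}, since it contains Truss's generic automorphism. So infinite orbits are perfectly compatible with comeager similarity classes; what separates your setting from $\Aut(\mathbb{Q})$ is precisely the $1$-Hrushovski property, which your plan never exploits beyond constructing $U$. Second, the invariant you propose --- the marked topological-isomorphism type of $\overline{\langle f_1,\ldots,f_n\rangle}$, i.e.\ the coset tree with marked generators --- has fibers that are exactly the topological similarity classes, so your plan amounts to exhibiting two similarity classes that are non-meager in $U$; and no such classes exist. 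The proof in \cite{KwMa} runs in the opposite direction: using the $1$-Hrushovski property one shows that the \emph{generic} tuple in $U$ generates a non-discrete group (extend a condition so that its first coordinate becomes an automorphism of a finite $B\supseteq F$, whence a suitable power of the first generator is the identity on $F$; finite algebraic closures are what allow one to also force that power to be nontrivial globally); every tuple in $U$ is non-precompact by your orbit observation; and by the criterion of \cite[Theorem 4.4]{KwMa} --- the very result this paper invokes in the proof of Theorem \ref{th:MeagerSim} --- a tuple generating a non-discrete, non-precompact group in a structure with finite algebraic closures has \emph{meager} similarity class. Consequently every similarity class meets $U$ in a meager set (classes of tuples generating discrete groups lie inside the meager set of such tuples, discreteness being a similarity invariant; all other classes meeting $U$ are meager outright), so a comeager class, which would have to meet the comeager subset of $U$ of non-discrete tuples, would equal the meager class of one of them --- a contradiction. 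If you want to salvage your write-up, replace the hunt for non-meager fibers by this meagerness argument: prove generic non-discreteness in $U$ (this is where the $1$-Hrushovski property and finite acl must both be used) and combine it with the quoted meagerness criterion.
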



It is well known that if a \fra class $\f$ has the Hrushovski property, and sufficiently free amalgamation, then the automorphism group $\Aut(M)$ of its limit $M$ has ample generics. 
By the above trichotomy, if $\f$ does not have the Hrushovski property, ample generics may be present in $\Aut(M)$ only if $\f$ does not even have the $1$-Hrushovski property -- which is true, in particular, for order classes. In this section, we prove (in Theorem \ref{th:NoComeager1}) that such situations always presuppose a very rigid form of amalgamation in $\f$. In order to specify what `rigid' is about in this context, let us introduce two definitions that capture what `flexible' amalgamation means for us.

In \cite[Definition 2.4]{Pa}, Panagiotopolus studies extensions of automorphisms of generic substructures of a given structure. He introduces the notion of splitting in a \fra class $\f$. An element $C \in \f$ \emph{splits} $\f$ if for every $D \in \f$ with $C \subseteq D$ there exist $D_1,D_2 \in \f$ with $D \subsetneq D_1,D_2$, and a bijection $f\colon D_1 \rightarrow D_2$ such that 

\begin{enumerate}
\item $f$ pointwise fixes $D$,
\item $f \rest (D_1 \setminus C)$ is an isomorphism between $D_1 \setminus C$ and $D_2 \setminus C$,
\item $f$ is not an isomorphism between $D_1$ and $D_2$.
\end{enumerate}

Analogously, we will say that $C \in \f$  \emph{strongly splits} $\f$ if for all $D,D_1 \in \f$ with $C \subseteq D \subsetneq D_1$ there exists $D_2 \in \f$ with $D \subsetneq D_2$, and a bijection $f\colon D_1 \rightarrow D_2$ such that Conditions (1)-(3) above hold. We will say that $\f$ \emph{strongly splits} if there exists $C \in \f$ that strongly splits $\f$, and that $\f$ \emph{always strongly splits} if every $C \in \f$ strongly splits $\f$.

We can think of $C$ in the above definitions as one `ear' of an amalgamation diagram $U \subsetneq V,W$, i.e., $C=V \setminus U$ and $D=V$. Then $C$ strongly splits if for any other `ear' $W \setminus U$ (i.e., $D_1 \setminus D$), there are at least two non-equivalent ways in which we can define relations involving elements from the `ears' $V \setminus U$ and $W \setminus U$ to form an amalgam of $V$ and $W$ over $U$: one represented by $D_1$ (where $W=D_1 \setminus C$), the other one by $D_2$ (where $D_2 \setminus C$ is an isomorphic copy of $W$.)

In particular, the property of always strong splitting can be also expressed as a variant of the amalgamation property: $\f$ always strongly splits if amalgamation in $\f$ is not too rigid, that is, if there is always more than one way of amalgamating structures. To be more precise, fix $C$, $D$ and $D_1$ as above, and think of $D_1$ as an amalgam of $D$ and $D_1 \setminus C$ over $D\setminus C$. Then any other non-isomorphic amalgam with the same underlying sets gives a required $D_2$. In other words, a class $\f$ always strongly splits if for any $A,U,V \in \f$ with $A< U,V$ there exist two non-isomorphic amalgams $W_1, W_2$ of $U$ and $V$ over $A$, with $U \cup V$ as the underlying set, and such that $U$ and $V$ embed in both $W_1$ and $W_2$ by the identity mapping.  

Proving the next two lemmas is straightforward, and left to the reader.

\begin{lemma}
\label{le:ReasonExp}
If $\f$ is a full order expansion of a class that (always) strongly splits, then $\f$ also (always) strongly splits.
\end{lemma}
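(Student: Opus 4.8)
The plan is to handle both the existential (\emph{strongly splits}) and the universal (\emph{always strongly splits}) versions at once, since they differ only in whether the conclusion is needed for one well-chosen $C\in\f$ or for every $C\in\f$; in either case the witness comes from a single lifting construction. Starting from a strong-splitting diagram in the reduct class $\f^-$, I would transport the linear order along the splitting bijection to produce a strong-splitting diagram in $\f$.

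Concretely, fix $C=(C^-,<_C)\in\f$ whose reduct $C^-$ strongly splits $\f^-$ (in the universal version this holds for every $C$; in the existential version I fix one ordering $<_C$ on a witnessing $C^-$). Let $D=(D^-,<_D)$ and $D_1=(D_1^-,<_{D_1})$ be given in $\f$ with $C\subseteq D\subsetneq D_1$. Passing to reducts yields $C^-\subseteq D^-\subsetneq D_1^-$ in $\f^-$, so strong splitting of $\f^-$ provides $D_2^-\in\f^-$ with $D^-\subsetneq D_2^-$ and a bijection $f\colon D_1^-\to D_2^-$ satisfying the reduct versions of (1)--(3): $f$ pointwise fixes $D^-$, the map $f\rest(D_1^-\setminus C^-)$ is an isomorphism onto $D_2^-\setminus C^-$, and $f$ is not an isomorphism of $D_1^-$ onto $D_2^-$.

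Next I would equip $D_2^-$ with the order $<_{D_2}$ transported from $D_1$ along $f$, i.e.\ I declare $x<_{D_2}y$ iff $f^{-1}(x)<_{D_1}f^{-1}(y)$, so that $f$ becomes by construction an order isomorphism $(D_1^-,<_{D_1})\to(D_2^-,<_{D_2})$. Since $\f$ is a full order expansion and $D_2^-\in\f^-$, the ordered structure $D_2=(D_2^-,<_{D_2})$ lies in $\f$. The single compatibility point that requires any care is that this order extends $<_D$: because $f$ fixes $D^-$ pointwise, for $x,y\in D^-$ we have $f^{-1}(x)=x$ and $f^{-1}(y)=y$, whence $x<_{D_2}y$ iff $x<_{D_1}y$ iff $x<_D y$; thus $<_{D_2}\rest D^-=\;<_D$ and $D\subsetneq D_2$ as ordered structures.

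Finally I would verify (1)--(3) for the ordered structures, viewing the set-level bijection $f$ now as a map $D_1\to D_2$. Condition (1) is immediate, since $f$ still fixes $D$ pointwise. For (2), the map $f$ is a global order isomorphism while $f\rest(D_1^-\setminus C^-)$ is already a reduct isomorphism onto $D_2^-\setminus C^-$, so $f\rest(D_1\setminus C)$ is an isomorphism of ordered structures onto $D_2\setminus C$. For (3), the reduct map $f$ already fails to preserve some relation of the signature of $\f^-$; since $f$ regarded as a map of ordered structures induces exactly that reduct map, it cannot be an isomorphism $D_1\to D_2$. I expect no genuine obstacle here: the content is bookkeeping, and the only substantive observation is that transporting the order along the splitting bijection is consistent with the inclusion $D\subseteq D_2$ precisely because that bijection fixes $D$ pointwise.
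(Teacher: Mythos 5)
Your proof is correct: transporting the linear order from $D_1$ to $D_2^-$ along the splitting bijection $f$ makes $f$ order-preserving, so conditions (1)--(3) lift verbatim (non-isomorphism survives because $f$ still fails some relation of the reduct signature), and the pointwise fixing of $D$ is exactly what guarantees $D\subseteq D_2$ as ordered structures. The paper offers no written proof --- it declares the lemma straightforward and leaves it to the reader --- and your argument is precisely the intended one, handling both the existential and universal versions uniformly.
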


\begin{lemma}
\label{le:FlexExt}
Let $p$ be a partial automorphism of a structure $A$, and let $x \in \rng(p)\setminus \dom(p)$. Suppose that $y,y' \in A$ are such that $p \cup (x,y)$ is a partial automorphism, and $\qftp_A(y/\rng(p))=\qftp_A(y'/\rng(p))$. Then $p \cup (x,y')$ is also a partial automorphism of~$A$.
\end{lemma}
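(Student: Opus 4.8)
The plan is to verify directly that $q' := p \cup \{(x,y')\}$ is injective and preserves all quantifier-free relations on its domain, which is exactly what it means to be a partial automorphism of $A$. Write $q := p \cup \{(x,y)\}$, so that $q$ is a partial automorphism by hypothesis, with $\dom(q) = \dom(p) \cup \{x\}$ and $\rng(q) = \rng(p) \cup \{y\}$. First I would settle injectivity. Since $q$ is a partial automorphism and $x \notin \dom(p)$, the element $y = q(x)$ cannot already lie in $\rng(p)$; hence for every $c \in \rng(p)$ the formula $v \neq c$ belongs to $\qftp_A(y/\rng(p))$. By the assumption $\qftp_A(y/\rng(p)) = \qftp_A(y'/\rng(p))$, the same holds of $y'$, so $y' \notin \rng(p)$ and $q'$ is a bijection from $\dom(p) \cup \{x\}$ onto $\rng(p) \cup \{y'\}$.

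For relation preservation, fix a relation symbol $R$ of arity $k$ and a tuple $(z_1, \ldots, z_k) \in \dom(q')^k = \dom(q)^k$. If every $z_i \in \dom(p)$, the equivalence $R^A(z_1, \ldots, z_k) \Leftrightarrow R^A(q'(z_1), \ldots, q'(z_k))$ holds because $p$ is a partial automorphism and $q'$ agrees with $p$ on these arguments. Otherwise some $z_i = x$. The crucial observation is that the source tuple $(z_1, \ldots, z_k)$ is the same for $q$ and $q'$ (it involves only elements of $\dom(p) \cup \{x\}$ and does not depend on $y$ or $y'$), while the image tuples $(q(z_1), \ldots, q(z_k))$ and $(q'(z_1), \ldots, q'(z_k))$ lie in $\rng(p) \cup \{y\}$ and $\rng(p) \cup \{y'\}$ respectively, differing only by replacing each occurrence of $y$ with $y'$. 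These are precisely the relations recorded in $\qftp_A(y/\rng(p))$ and $\qftp_A(y'/\rng(p))$, so the equality of types yields $R^A(q(z_1), \ldots, q(z_k)) \Leftrightarrow R^A(q'(z_1), \ldots, q'(z_k))$. Chaining with the fact that $q$ is a partial automorphism gives $R^A(z_1, \ldots, z_k) \Leftrightarrow R^A(q(z_1), \ldots, q(z_k)) \Leftrightarrow R^A(q'(z_1), \ldots, q'(z_k))$, as required.

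The only point demanding care—and the nearest thing to an obstacle in an otherwise routine verification—is that $x$ itself lies in $\rng(p)$, so $x$ occurs among the parameters of the types over $\rng(p)$; one must be sure that an occurrence of $x$ on the image side (namely when some $z_j \in \dom(p)$ satisfies $p(z_j) = x$) is handled correctly. This is automatic, since such an occurrence sits inside $\rng(p)$, and the equality of quantifier-free types over $\rng(p)$ controls every relation between $y$ (respectively $y'$) and arbitrary tuples drawn from $\rng(p)$, including those containing $x$.
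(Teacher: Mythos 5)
Your proof is correct: the paper itself leaves this lemma to the reader as "straightforward," and your direct verification (injectivity via $y,y'\notin\rng(p)$, then transferring each atomic relation on the image tuple through the equality of quantifier-free types over $\rng(p)$) is precisely the intended argument. The remark about $x\in\rng(p)$ appearing as a parameter on the image side is handled correctly, since such occurrences are absorbed into the parameters of the type.
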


\begin{lemma}
\label{le:SepOrb}
Let $\f$ be a full order expansion with SAP. Then for every $p \in \f_1$, there exists $q \in \f_1$ extending $p$ such that for every $r \in \f_1$ extending $q$, distinct orbits of $p$ are contained in distinct orbits of $r$.
\end{lemma}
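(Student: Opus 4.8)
The plan is to reduce the statement to a separation problem about \emph{monotone} orbits, and then to separate each dangerous pair either by forcing the two orbits to interleave in the order, or by exhibiting an order-theoretic barrier already present.

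First I would record three consequences of order-preservation. Since $p$ preserves $<$, every non-trivial orbit of $p$ is strictly monotone (increasing or decreasing), and every cyclic orbit is a single fixed point, because an order-preserving permutation of a finite linear order is the identity. Next, in any $r\in\f_1$ extending $q$, each orbit $O$ of $p$ sits inside its $r$-orbit as a \emph{consecutive} monotone run, since $r$ agrees with $p$ on $O$. Two facts follow: an increasing and a decreasing orbit of $p$ can never lie in one $r$-orbit (a monotone chain cannot contain both an increasing and a decreasing run), and two orbits of the same direction can lie in one $r$-orbit only if they are \emph{order-separated}, i.e.\ one convex hull lies entirely below the other. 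Trivial orbits are fixed by every $r$, hence automatically separated, and interleaved same-direction orbits cannot merge. Thus it suffices to separate every order-separated pair of increasing orbits $A$, $B$ with $x=\max A<\min B=y$; the decreasing case is symmetric.

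The core is a dichotomy obtained by greedily extending $A$ forward. At a current top $t\notin\dom(q)$ one may set $q(t)$ to a fresh point just below $\min\{q(z):z\in\dom(q),\ z>t\}$; this is possible exactly when $\min\{q(z):z>t\}>t$, and since an increasing edge has $q(z)>z$, the only possible obstruction is a \emph{decreasing} edge $w\to q(w)$ with $w>t$ and $q(w)\le t$. Following this greedy climb the top strictly increases, and being bounded by the finitely many values $q(z)$ it halts after finitely many steps: either (i) the top exceeds $y$, so $A$ (thus extended) and $B$ interleave and can never merge; or (ii) it halts at some $t^\ast<y$, forced by a decreasing edge $w\to q(w)$ with $w>t^\ast$ and $q(w)\le t^\ast<y$. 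In case (ii) this edge is itself a barrier: for every $r\supseteq q$, any $e<w$ gives $r(e)<r(w)=q(w)\le t^\ast<w$, so inductively every forward iterate of $x$ stays below $w$ and its image stays below $q(w)<y$; hence the $r$-orbit of $x$ never reaches $\min B$. Either way $A$ and $B$ are separated. To realize the fresh climbing points I would invoke the standing observation about full order expansions with SAP: each point can be placed in the prescribed $<$-interval (the image gap) while carrying the reduct quantifier-free type needed for $q$ to remain a partial automorphism.

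Finally I would assemble a single finite $q$ by climbing every increasing orbit upward past $\max\{\min B:A\prec B\}$ (as far as the greedy climb allows) and every decreasing orbit downward, always using fresh points and never connecting distinct orbits, so that the orbits of $p$ remain distinct orbits of $q$. The step I expect to be the real obstacle is showing that these simultaneous extensions do not interfere destructively, and here the dichotomy is self-correcting: climbing an increasing orbit adds only increasing edges, which never obstruct another increasing climb, whereas climbing a decreasing orbit may add a decreasing edge that blocks some increasing climb—but any such new obstruction is exactly of type (ii) and therefore \emph{auto-blocks} the affected pair rather than merging it. Thus every obstruction the construction introduces can only create a barrier, so in the final $q$ each same-direction order-separated pair is interleaved or auto-blocked; by the reductions above, distinct orbits of $p$ then lie in distinct orbits of every $r\in\f_1$ extending $q$. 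I would organize the climb in finitely many rounds (or as one amalgamation diagram realized via SAP) to keep $q$ finite and to make the benign-interference argument precise.
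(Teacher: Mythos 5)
Your reduction to non-trivial, same-direction, order-separated pairs is correct, and the ``barrier'' idea is genuinely the right second half of a dichotomy; the gap is that the dichotomy itself is misstated, and as written the case analysis collapses. You claim the climbing step at the current top $t$ of the increasing orbit $A$ can fail, the obstruction being a decreasing edge $w\mapsto q(w)$ with $w>t$ and $q(w)\le t$. No such edge can exist: $A$ is a non-trivial increasing orbit, so $t\in\rng(q)$ and $q^{-1}(t)<t$, whence for every $w\in\dom(q)$ with $w>t$ order-preservation gives $q(w)>q(q^{-1}(t))=t$. Thus every single climbing step succeeds, your case (ii) is vacuous, and your argument, read literally, proves that every order-separated pair can be interleaved --- which is false. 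The true obstruction is a decreasing-or-fixed edge whose \emph{image} lies in $(\max A,\min B]$: if $q(w)\le w$ and $\max A<q(w)\le\min B$, then in every extension $r$ and for every $e<w$ one has $r(e)<r(w)=q(w)\le w$, so all forward iterates of $\max A$ remain below $q(w)\le\min B$, and your greedy climb stalls forever in the gap below $q(w)$ with no edge of your stated form in sight. The correct dichotomy is: either no edge $w\mapsto q(w)$ with $q(w)\le w$ has image in $(\max A,\min B]$, and then the climb does pass $\min B$ (each value $R=\min\{q(z)\colon z>t\}$ that is $\le\min B$ is realized by an increasing edge, which falls below the new top, so finitely many steps suffice); or such an edge exists and is itself a permanent barrier. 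Your case-(ii) computation is exactly this barrier argument, but run with inequalities ($q(w)\le t^*<w$) that cannot occur; rerun with $t^*<q(w)\le w$ it works, and the ``self-correcting assembly'' must be restated accordingly: a climb is blocked only by an edge whose image lies below the relevant target, and such an edge separates that very pair.

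Two further points. First, with the paper's notion of orbit, a trivial orbit need not be a fixed point: a point of $A$ outside $\zdef(p)$ is a (non-cyclic) trivial orbit, it is not fixed by extensions, and it can be merged with other orbits; your dismissal of trivial orbits covers only fixed points, so these must be treated as well (for them, incidentally, your stated obstruction $q(w)\le t$ is the relevant one, since there is no preimage forcing $q(w)>t$). Second, for contrast: the paper's proof needs no climbing dichotomy at all. For each pair that is joinable in some extension, it takes an extension $q''$ that is one move $q''(x)=y$ away from joining and, using fullness with SAP and Lemma \ref{le:FlexExt}, redirects that move to a fresh $y'>y$ with $\qftp(y'/\rng(q''))=\qftp(y/\rng(q''))$; then $x<y<q(x)=y'$ traps $y$ strictly between consecutive elements of the other orbit, and monotonicity of orbits keeps the pair separated in all further extensions. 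That argument is shorter, handles all orbits (including trivial ones) uniformly, and sidesteps precisely the analysis your proposal gets wrong.
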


\begin{proof}
Fix $p \in \f_1$, and let $\OOc_0, \ldots, \OOc_n$ be orbits of $p$. 
Fix $i<j \leq n$, and suppose that there exists an extension $q'$ of $p$ such that $\OOc_i$ and $\OOc_j$ are in the same orbit of $q'$. Then there must exist a partial automorphism $q''$ extending $p$, and $x, y \in \rng(q'') \triangle \dom(q'')$, $x$ in the orbit of $q''$ determined by $\OOc_i$, $y$ in the orbit of $q''$ determined by $\OOc_j$, and we can extend $q''$ by putting $q''(x)=y$ or $(q'')^{-1}(x)=y$.
Without loss of generality, we can assume that $x<y$, and $q'' \cup (x,y)$ extends $q''$. Since $\f$ is a full order expansion with SAP, there exists $C \in \f$ with $\zdef(q'') \subseteq C$, and $y' \in C \setminus \zdef(q'')$ with $y'>y$, and such that $\qftp_C(y/\rng(q''))=\qftp_C(y'/\rng(q''))$. But then, by Lemma \ref{le:FlexExt}, $q=q'' \cup (x,y')$ also extends $q''$, and $x<y<q(x)$. Clearly, $\OOc_i$ and $\OOc_j$ stay distinct in any extension of $q$. By iterating this construction, we can find $q$ that works for all $i<j \leq n$.
\end{proof}

\begin{theorem}
\label{th:NoComeager1}
Let $\f$ be a \fra class. Suppose that

\begin{enumerate}
\item $\f$ does not have the $1$-Hrushovski property, and it always strongly splits, or 
\item $\f$ is a full order expansion with SAP, and it strongly splits.
\end{enumerate}
Then $\f_1$ has no WAP.
\end{theorem}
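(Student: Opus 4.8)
The plan is to refute WAP directly: I will produce a single $A=(A_0,p_0)\in\f_1$ for which no amalgamation base exists, i.e.\ such that for every inclusion $\phi\colon A\hookrightarrow A'$, where $A'=(A',p')$ with $p'\supseteq p_0$ (by the remark after the definition of WAP we may restrict to inclusions), there are $B,C\in\f_1$ extending $A'$ admitting no amalgam over $A$. In case (1) I take $(A_0,p_0)$ witnessing the failure of the $1$-Hrushovski property, so that $p_0$ extends to no automorphism of any member of $\f$. In case (2), since $\f$ is an order class every non-trivial orbit is automatically non-cyclic; fixing $S\in\f$ that strongly splits $\f$ and writing $n=|S|$, I let $p_0$ have $n$ non-trivial orbits and, using Lemma~\ref{le:SepOrb}, replace it by a separated extension, so that these $n$ orbits remain in distinct orbits in every $A'$.

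Everything rests on a rigidity property of any putative amalgam. Suppose $D=(D,p_D)\in\f_1$ amalgamates $B,C$ over $A$ via embeddings $\gamma\colon B\to D$, $\delta\colon C\to D$ with $\gamma\rest A=\delta\rest A$. Write $\widehat A$ for the union of all $p'$-orbits meeting $A$; every element of $\widehat A$ has the form $(p')^{j}(a)$ for some $a\in A$ and integer $j$ (where defined). Because $\gamma,\delta$ commute with the partial automorphisms, agree on $A$, and $p_D$ is injective, so that $j$-fold images and preimages are unique where they exist, one computes $\gamma((p')^{j}a)=(p_D)^{j}\gamma(a)=(p_D)^{j}\delta(a)=\delta((p')^{j}a)$; hence $\gamma$ and $\delta$ agree on all of $\widehat A$, not merely on $A$. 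The sources of the non-cyclic orbits meeting $A$ are exactly the elements of $\widehat A\setminus\rng(p')$: these positions are \emph{protected}, lying where $\gamma,\delta$ are forced to agree yet unconstrained by the type of any forward image we might add.

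Given $A'$, I first locate a non-cyclic orbit $\OOc$ meeting $A$: in case (1) this is where non-extendability enters, for if every orbit meeting $A_0$ were cyclic then their union $U\subseteq A'$ would be a substructure on which $p'\rest U$ is an automorphism extending $p_0$, a contradiction; in case (2) any non-trivial orbit serves. Let $a^*$ be the forward endpoint of $\OOc$, so $a^*\notin\dom(p')$. Using ultrahomogeneity of the limit, equivalently the interval observation in case (2), I extend $p'$ one step forward, $z:=p_B(a^*)$, so that $\qftp(z/\rng(p'))$ is forced while $z$ is otherwise free. I then invoke strong splitting to manufacture a second \emph{valid} image $z'$ differing from $z$ only over protected positions. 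In case (1), ``always strongly splits'' lets me use the singleton splitting structure $\{s\}$, where $s$ is the source of $\OOc$: I obtain $z'$ with $\qftp(z/A'\setminus\{s\})=\qftp(z'/A'\setminus\{s\})$ but $\qftp(z/\{s\})\neq\qftp(z'/\{s\})$, and since $s\notin\rng(p')$ both images are valid by Lemma~\ref{le:FlexExt}. Setting $B=(A'\cup\{z\},\,p'\cup\{(a^*,z)\})$ and $C=(A'\cup\{z'\},\,p'\cup\{(a^*,z')\})$, the contradiction is then immediate: in $D$ one has $w:=\gamma(z)=p_D(\gamma(a^*))=p_D(\delta(a^*))=\delta(z')$, because $a^*\in\widehat A$, so the quantifier-free type of $w$ over $\gamma(\widehat A)=\delta(\widehat A)$ equals both $\qftp(z/\widehat A)$ and $\qftp(z'/\widehat A)$, which differ at $s$.

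The main obstacle is exactly this localisation: both images must agree over $\rng(p')$ to be valid, yet must disagree somewhere in $\widehat A$ to block amalgamation, so the discrepancy must be pushed into $\widehat A\setminus\rng(p')$. In case (1) a singleton at a source achieves this at once. In case (2) the splitting structure $S$ may have several points, and the real work, carried out through Lemma~\ref{le:SepOrb} and the interval observation for full order expansions with SAP, is to first enlarge $A'$ to a common $A''$ by extending the $n$ still-distinct orbits meeting $A$ backward so that their sources carry a copy $\bar S\cong S$ with $\bar S\subseteq\widehat A\setminus\rng(p')$. Since $A''\setminus A'\subseteq\widehat A$, the maps $\gamma$ and $\delta$ continue to agree on all of $A''\supseteq\bar S$; applying strong splitting of $S$ over $D=A''$ then yields $z,z'$ agreeing over $A''\setminus\bar S\supseteq\rng(p')$ but differing over $\bar S$, and the computation above again produces the contradiction. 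Hence in both cases the required amalgam fails to exist, and $\f_1$ has no WAP.
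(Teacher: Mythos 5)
Your proposal follows the paper's route quite closely --- same witnesses (a failure of the $1$-Hrushovski property in case (1); a partial automorphism tied to the splitting structure, separated via Lemma \ref{le:SepOrb}, in case (2)), the same two one-point forward extensions at the endpoint of a non-cyclic orbit meeting $A$, validity of the second extension via Lemma \ref{le:FlexExt}, and a blocking argument through the orbit closure $\widehat A$ (your explicit observation that $\gamma$ and $\delta$ must agree on all of $\widehat A$ is the right formulation of what the paper states more tersely). The genuine gap is in your resolution of what you yourself call the main obstacle. Strong splitting of $C=\{s\}$ over $D=A'$, $D_1=A'\cup\{z\}$ yields $z'$ with $\qftp(z/A'\setminus\{s\})=\qftp(z'/A'\setminus\{s\})$ and $\qftp(z/A')\neq\qftp(z'/A')$; it does \emph{not} yield $\qftp(z/\{s\})\neq\qftp(z'/\{s\})$, and a fortiori not $\qftp(z/\widehat A)\neq\qftp(z'/\widehat A)$. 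The definition only guarantees that every tuple witnessing the failure of isomorphism contains $s$ and $z$; if the signature has a relation of arity at least $3$, such a tuple may also contain points of $A'\setminus\widehat A$, on which $\gamma$ and $\delta$ need not agree, and then $\qftp(z/\widehat A)=\qftp(z'/\widehat A)$ is entirely possible, so your final computation produces no contradiction. (For purely binary signatures the witnessing tuple is forced to be the pair $\{z,s\}$, which lies in $\widehat A\cup\{z\}$, and your argument closes; but the theorem concerns arbitrary \fra classes.) The same overstatement occurs in case (2), where you claim the types ``differ over $\bar S$''. The paper handles this junction differently: it splits over $C=\dom(q)\setminus\rng(q)$ with $D=\zdef(q)$, obtains the disagreement over $\zdef(q)$, and then argues that embeddings agreeing on $\zdef(p)$ must agree on $\zdef(q)$.

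Case (2) has a second gap: the assertion that the $n$ still-distinct orbits can be extended backward ``so that their sources carry a copy $\bar S\cong S$''. This is not mere bookkeeping. Relations between backward extensions of distinct orbits are largely forced: commutation with the partial automorphism makes the relation between two new sources equal to the relation between suitable forward images of them, and those images lie in the adversarially given $A'$; in addition, the linear order constrains where a source can sit (below its whole orbit if the orbit is increasing, above it if decreasing), so the relative order of the new sources cannot always be prescribed when orbits interleave. Realizing a prescribed ordered structure $S$ on the sources therefore requires a real argument and may fail for unfavourable $A'$. The paper sidesteps exactly this by building the copy of the splitting structure into the witness from the start, as $A=\dom(p)\setminus\rng(p)$, and then using Lemma \ref{le:SepOrb} to keep a copy of it among the sources of every extension.
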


\begin{proof}

Assume that Condition (1) holds. Fix $p \in \f_1$ witnessing that $\f$ does not have the $1$-Hrushovski property. We show that $p$ also witnesses that $\f_1$ does not have WAP. 

Fix $q \in \f_1$ that extends $p$. Clearly, there must exist an orbit $\mathcal{O}$ of $q$ intersecting $\dom(p)$ that is non-cyclic -- otherwise the union of such orbits would be a structure in $\f$ invariant under $q$. Let $\OOc=\{o_0, \ldots, o_n\}$ with $q(o_i)=o_{i+1}$ for $i<n$. As $\OOc$ is not cyclic, $q$ is not defined on $o_n$. Fix $y_0 \not \in \zdef(q)$ such that $q_0=q \cup (o_n,y_0)$ is a partial automorphism of $D_1=\zdef(q) \cup \{y_0\}$. 
Since $\f$ always strongly splits, by putting $C=\dom(q) \setminus \rng(q)$ (note that $o_0 \in C$, so $C \neq\emptyset$), $D=\zdef(q)$ (and $D_1=\zdef(q) \cup \{y_0\}$), we can find $y_1$ such that $D_2= \zdef(q) \cup \{y_1\}$ witnesses that $C$ strongly splits. However, this means that 
\begin{equation}
\label{eq:00}
\qftp_C(y_0/ \rng(q))=\qftp_C(y_1/\rng(q))
\end{equation}
but
\begin{equation}
\label{eq:0}
\qftp_C(y_0/(\zdef(q))) \neq \qftp_C(y_1/ (\zdef(q))).
\end{equation}
Then (\ref{eq:00}) together with Lemma \ref{le:FlexExt} implies that $q_1=q \cup (o_n,y_1)$ is also a partial automorphism. On the other hand, for every $r \in \f_1$ such that $q_0$ and $q_1$ can be embedded into $r$ by embeddings $e_0$ and $e_1$, respectively, that agree on $\zdef(q)$, we must have $e_0(y_0)=e_1(y_1)$, and this is impossible because of (\ref{eq:0}). Moreover, the same argument can be applied to any extension of $p$, so, in fact, if $e_0$, $e_1$ agreed on $\zdef(p)$, they would agree on $\zdef(q)$ as well. Thus, $q_0$, $q_1$ cannot be amalgamated over $p$. As $q$ was  arbitrary, $p$ witnesses that $\f_1$ does not have WAP. 

Assume now that Condition (2) holds. Fix $A \in \f$ witnessing that $\f$ strongly splits. Fix $p \in \f_1$ such that $A=\dom(p) \setminus \rng(p)$ (this can be easily done using the assumption that $\f$ is is a full order expansion with SAP), and extend $p$ to a partial automorphism $q$ as in Lemma \ref{le:SepOrb}. Then for every extension $r \in \f_1$ of $q$, there exists $A' \subseteq \dom(r) \setminus \rng(r)$ that is isomorphic with $A$, and so $A'$ also witnesses that $\f$ strongly splits. Now we proceed as in the proof of  (1). Fix a (non-cyclic) orbit $\OOc=\{o_0, \ldots, o_n \}$ of $r$ intersecting $\dom(p)$, find $y_0$ such that $r_0=r \cup \{(o_n,y_0)\}$ is a partial automorphism, and put $C=A'$, $D=\zdef(r)$, $D_1=\zdef(r) \cup \{y_0\}$ to obtain $y_1$ such that $r_0$ and $r_1=r \cup (o_n,y_1)$ cannot be amalgamated over $p$. 
\end{proof}


\begin{proposition}
\label{le:TourGraphSplit}
The classes of ordered graphs, and ordered tournaments always strongly split. 
\end{proposition}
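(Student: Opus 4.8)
The plan is to verify the definition of always strong splitting directly, by exhibiting, for each (nonempty) $C \in \f$ and each pair $D \subsetneq D_1$ with $C \subseteq D$, a single ``local'' modification of $D_1$ that produces the required $D_2$; here $\f$ denotes either the class of ordered graphs or the class of ordered tournaments. First I would fix a nonempty $C \in \f$ and $D, D_1 \in \f$ with $C \subseteq D \subsetneq D_1$, and set $E = D_1 \setminus D$. Since $D \subsetneq D_1$ the set $E$ is nonempty, so I can choose $e \in E$, and since $C$ is nonempty I can choose $c \in C$. The key observation is that, because $C \subseteq D$, the element $e$ lies in $D_1 \setminus C$ while $c$ does not; thus the unordered pair $\{e,c\}$ has exactly one endpoint in $C$ and one endpoint in $E \subseteq D_1 \setminus C$.

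I would then take $D_2$ to have the same underlying set, the same linear order, and all the same relations as $D_1$, \emph{except} for the single relation tying $e$ to $c$: in the graph case I flip the edge $\{e,c\}$ (adding it if absent, deleting it if present), and in the tournament case I reverse the arc between $e$ and $c$. In both cases $D_2$ is again a legitimate member of $\f$ (a finite ordered graph, resp.\ a finite ordered tournament), and since the altered pair involves $e \notin D$, the structure induced on $D$ is untouched, so $D \subsetneq D_2$. Taking $f = \Id \colon D_1 \to D_2$, Condition~(1) is immediate. For Condition~(2), the only place where $D_1$ and $D_2$ disagree is on the pair $\{e,c\}$, and since $c \in C$ this disagreement does not occur within $D_1 \setminus C$; hence $f \rest (D_1 \setminus C)$ is the identity isomorphism onto $D_2 \setminus C$. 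For Condition~(3), $f$ fixes both $e$ and $c$ yet the relation between them differs in $D_1$ and $D_2$, so $f$ is not an isomorphism $D_1 \to D_2$. As $C$, $D$, $D_1$ were arbitrary, this shows that $\f$ always strongly splits.

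The only real obstacle is the bookkeeping about where the modification is permitted to occur: the edge/arc I change must meet $C$ in exactly one endpoint, so that the change is invisible to the substructure on $D_1 \setminus C$ (securing Condition~(2)) while still breaking the global isomorphism (securing Condition~(3)). This is precisely the point at which I use that $C$ is nonempty; for $C = \emptyset$ Conditions~(2) and~(3) are contradictory and strong splitting fails outright, so ``every $C$'' should be read over nonempty structures, exactly as $C \neq \emptyset$ is arranged in the proof of Theorem~\ref{th:NoComeager1}. Finally, I note that one may instead route the argument through Lemma~\ref{le:ReasonExp}: the identical edge/arc flip shows that the (unordered) classes of graphs and of tournaments always strongly split, and since ordered graphs and ordered tournaments are the full order expansions of these classes, Lemma~\ref{le:ReasonExp} delivers the proposition.
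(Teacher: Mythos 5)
Your proof is correct and uses exactly the construction in the paper: flip the single edge (resp.\ reverse the single arc) between a point $c \in C$ and a point of $D_1 \setminus D$, and take $f = \Id$. Your additional verifications of Conditions (1)--(3), the remark that $C$ must be nonempty, and the alternative route via Lemma~\ref{le:ReasonExp} are all sound elaborations of the same argument.
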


\begin{proof}
Let $\f$ be either the class of ordered graphs or ordered tournaments. Fix $C, D, D_1 \in \f$ with $C \subseteq D \subsetneq D_1$. Fix $c \in C$ and $d_1 \in D_1 \setminus D$. For graphs, define $D_2$ to be the graph that differs from $D_1$ only in that $\{ c,d_1\}$ is an edge in $D_2$ if and only if $\{ c,d_1\}$ is not an edge in $D_1$. Similarly, for ordered tournaments, define $D_2$ to be the ordered tournament that differs from $D_1$ only in that $(d_1,c)$ is an arrow in $D_2$ if and only if $(c,d_1)$ is an arrow in $D_1$.

\end{proof}

\begin{corollary}
\begin{enumerate}
\item The class of partial automorphisms of finite ordered tournaments has no WAP.
\item (Slutsky) The class of partial automorphisms of finite ordered graphs has no WAP.
\end{enumerate}
\end{corollary}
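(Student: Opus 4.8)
The plan is to obtain both parts as immediate consequences of Theorem \ref{th:NoComeager1}(1), feeding in the splitting property established in Proposition \ref{le:TourGraphSplit}. Write $\f$ for the class of finite ordered tournaments in part (1), and for the class of finite ordered graphs in part (2); note that in either case ``the class of partial automorphisms'' of the structures in question is exactly $\f_1$ in the notation of the paper, so the corollary asserts that this $\f_1$ has no WAP.

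First I would check that each such $\f$ is a \fra class. Both the class of finite graphs and the class of finite tournaments are well-known \fra classes, and passing to the full order expansion preserves this: JEP and AP for the ordered version follow from JEP and AP for the unordered version, since once an unordered amalgam is fixed, any two compatible finite linear orders can themselves be amalgamated. Thus $\f = \f^- * \mathcal{LO}$ is a \fra class, which is the standing hypothesis needed to invoke Theorem \ref{th:NoComeager1}.

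Next I would verify the two hypotheses of Condition (1) of that theorem. Since $\f$ is an order class, it fails the $1$-Hrushovski property: as observed in the introduction, any order-preserving bijection that moves a point generates an infinite orbit, so a partial automorphism with a non-trivial orbit cannot be extended to a total automorphism inside a \emph{finite} ordered structure. The second hypothesis, that $\f$ always strongly splits, is precisely the content of Proposition \ref{le:TourGraphSplit}. With both hypotheses in hand, Theorem \ref{th:NoComeager1}(1) yields that $\f_1$ has no WAP, which is exactly the assertion of the corollary.

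I do not expect a genuine obstacle here, as the statement is a direct corollary; the only point requiring care is the bookkeeping, namely confirming that ``the class of partial automorphisms of finite ordered graphs/tournaments'' is literally $\f_1$ and that the relevant $\f$ meets the \fra-class assumption so that Theorem \ref{th:NoComeager1} is applicable. One could alternatively route part (2) through Condition (2) of the theorem, using that ordered graphs form a full order expansion with SAP that strongly splits; but Condition (1) is the cleaner path, since it sidesteps verifying SAP and uses the stronger ``always strongly splits'' conclusion of Proposition \ref{le:TourGraphSplit} directly.
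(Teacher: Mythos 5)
Your proof is correct and takes essentially the same route as the paper: both arguments feed Proposition \ref{le:TourGraphSplit} into Theorem \ref{th:NoComeager1}, after noting the classes in question are \fra classes. The only (negligible) difference is that the paper invokes clause (2) of that theorem (full order expansion with SAP, strongly splits) while you invoke clause (1) (failure of the $1$-Hrushovski property, always strongly splits); the same facts verify both clauses, and you point out the clause (2) alternative yourself.
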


\begin{proof}
Finite ordered graphs and finite ordered tournaments are full order expansions with SAP. By Proposition \ref{le:TourGraphSplit}, both of them always strongly split, so, by Theorem \ref{th:NoComeager1}, they have no WAP.
\end{proof}

On the other hand, it is easy to see that both of these classes have JEP, which means (by Theorem \ref{kechros}), that automorphism groups of the universal ordered tournament and the universal ordered graph (i.e., \fra limits of the above classes) have meager conjugacy classes.

Slutsky \cite{Sl} also proved that the automorphism group $\Aut(\mathbbm{QU}_{\prec})$ of the ordered rational Urysohn space $\mathbbm{QU}_{\prec}$, i.e., the full order expansion of finite ordered metric spaces with rational distances, has meager conjugacy classes. One of the ingredients of his proof is a deep theorem of Solecki saying that the class of finite metric spaces has the Hrushovski property. Theorem \ref{th:NoComeager1} cannot be used to recover Slutsky's result because the class of ordered finite metric spaces with rational distances does not strongly split. However, a similar approach gives rise to a more elementary argument. We sketch it below. Note that $(\mathcal{QU}_{\prec})_1$ has JEP,
therefore it will suffice to prove that it has no WAP.

\begin{theorem}[Slutsky]
\label{th:QU}
The class $(\mathcal{QU}_{\prec})_1$ has no WAP.
\end{theorem}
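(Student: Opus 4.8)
The plan is to invoke the criterion of Theorem \ref{kechros}: since $(\mathcal{QU}_{\prec})_1$ plainly has JEP, it suffices to show that it has no WAP. Following the strategy of Theorem \ref{th:NoComeager1}, I would fix a partial isometry $p$ that moves a point, say $p(a)=b$ with $a\neq b$, and try to make $p$ witness the failure of WAP: for every $q\in(\mathcal{QU}_{\prec})_1$ extending $p$ I would produce $q_0,q_1\in(\mathcal{QU}_{\prec})_1$ extending $q$ that do not amalgamate over $p$. The relevant structure is the orbit $\OOc=\{o_0,\dots,o_n\}$ of $a$ under $q$. Since $q(a)=b\neq a$ and a non-trivial finite cycle cannot preserve a linear ordering, $\OOc$ is non-cyclic, so $o_0\in\dom(q)\setminus\rng(q)$ and $o_n\in\rng(q)\setminus\dom(q)$.

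The engine of the argument is orbit propagation. If $e_0,e_1$ embed $q_0,q_1$ into a common $r$ and agree on $\zdef(p)=\{a,b\}$, then applying the partial isometry of $r$ forward and backward along $\OOc$ forces $e_0$ and $e_1$ to agree on all of $\OOc$, in particular on $o_0$ and $o_n$. Hence, if I extend the map at the end of the orbit in two ways, $q_0=q\cup(o_n,y_0)$ and $q_1=q\cup(o_n,y_1)$, with $\qftp(y_0/\rng(q))=\qftp(y_1/\rng(q))$, then agreement on $o_n$ forces $e_0(y_0)=e_1(y_1)$, while agreement on $o_0$ gives $d(y_0,o_0)=d(e_0(y_0),e_0(o_0))=d(e_1(y_1),e_1(o_0))=d(y_1,o_0)$ and likewise equality of the order relations to $o_0$. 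So it is enough to arrange $\qftp(y_0/\zdef(q))\neq\qftp(y_1/\zdef(q))$, the two types differing already over the single point $o_0$. By Lemma \ref{le:FlexExt}, having the same type over $\rng(q)$ automatically makes both $q_0,q_1$ legitimate partial isometries, so the whole problem reduces to producing such $y_0,y_1$.

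Here is where $\mathcal{QU}_{\prec}$ must be used in place of strong splitting, which this class lacks. The type of $y$ over $\rng(q)$ is entirely determined: the distances $d(y,q(w))=d(o_n,w)$ are fixed by the isometry condition, and order preservation pins $y$ into a single $\rng(q)$-interval $J$. What stays free is the relation of $y$ to the domain-only point $o_0$, since $o_0\notin\rng(q)$. I would realize two admissible values by a one-point (Kat\v{e}tov-type) metric extension: the feasible set for $d(y,o_0)$ is the interval $[\,\max_{v\in\rng(q)}|d(o_n,q^{-1}(v))-d(o_0,v)|,\ \min_{v\in\rng(q)}(d(o_n,q^{-1}(v))+d(o_0,v))\,]$, and if this interval is nondegenerate I pick two distinct rationals in it; otherwise I place $y_0,y_1$ on opposite sides of $o_0$ inside $J$, which is admissible precisely because $\mathcal{QU}_{\prec}$ is a full order expansion (metric and order being independent).

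The step I expect to be the main obstacle is guaranteeing that at least one of these two perturbations is available for \emph{every} extension $q$. The triangle inequality, reinforced by the isometry constraints, can collapse the displayed interval to a single point, and the linear order can be simultaneously arranged so that $o_0$ lies strictly outside $J$; in fact a restriction of a translation of the integer line realizes exactly such a rigid $q$, showing that a single two-point orbit is not enough. To overcome this I would take $p$ with a richer orbit structure --- several moving points in distinct orbits with prescribed cross-distances that cannot be jointly embedded in a flat, translation-like configuration --- and perturb the image of the end of one orbit against the domain-only start of another. Both targets still lie in the propagated (guaranteed-agreement) part of the configuration, but the metric independence of the orbits, together with the strong amalgamation of $\mathcal{QU}_{\prec}$, should prevent the adversary from rigidifying every such cross-pair at once. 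Verifying that metric or order slack always survives on the propagated part is the crux, and is exactly where this elementary argument does by hand the work that Slutsky's original proof extracted from Solecki's theorem on the Hrushovski property of finite metric spaces.
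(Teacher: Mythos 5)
Your skeleton matches the mechanism that actually underlies the paper's proof: reduce to showing no WAP, use orbit propagation to see that amalgamating embeddings agreeing on $\zdef(p)$ must agree on the whole orbit (including the new endpoint images), and then try to realize two admissible values of the distance from the new orbit point back to a propagated point. But the proposal stalls exactly at what you yourself call the crux: you never prove that for \emph{every} candidate WAP-witness $q$ extending $p$, some further extension of the orbit admits genuine metric (or order) slack. Your fallback plan --- take $p$ with several orbits and prescribed cross-distances and hope that strong amalgamation prevents simultaneous rigidification --- is left entirely as a hope, so the argument is incomplete. There is also a concrete error along the way: the integer translation is \emph{not} rigid. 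For $q(i)=i+1$ on $\{0,\dots,n\}$ with $d(i,j)=|i-j|$, the isometry constraints force $d(y,i)=n+1-i$ for $i\in\rng(q)$, but the admissible interval for $d(y,0)$ is then $[n-1,n+1]$ (in $\mathbbm{QU}$ nothing forces $y$ to stay on the line), so the single-step perturbation already succeeds in that example. What can genuinely happen is that a new orbit point gets metrically pinned \emph{between} existing points; that is the case your proposal does not handle.

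The paper closes this gap with two ingredients absent from your proposal. First, a triangle-inequality analysis: if $\qftp_A(y/(A\setminus\{x\}))$ determines $d(x,y)$, then there must exist $a,a'\in A$ with $d(y,x)=d(y,a)+d(a,x)$ and $d(a',x)=d(a',y)+d(y,x)$, and hence $d(x,y)\le\diam(A)$; so rigidity can persist only while the orbit stays within the diameter of the configuration. Second, the known fact (see Section 3.1 of \cite{DoMa}) that every partial isometry of $\mathbbm{QU}$ with no cyclic orbits extends to an isometry of $\mathbbm{QU}$ with \emph{unbounded} orbits; since $\mathcal{QU}_{\prec}$ is a full order expansion, the same holds for ordered partial automorphisms. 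Combining these: if $q$ were a WAP witness over $p$, propagation would force every distance from a new orbit point to the propagated part to be determined, hence by the first ingredient all orbits of points of $\zdef(p)$ under any automorphism extending $q$ would be bounded by $\diam(\zdef(q))$ --- contradicting the second ingredient. In particular, no enrichment of $p$ is needed (any $p$ with a non-trivial, automatically non-cyclic, orbit works): unboundedness of orbits, not multiplicity of orbits, is the missing idea, and it is precisely this that replaces Solecki's theorem in making the argument elementary.
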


\begin{proof}
First fix $A \in \mathcal{QU}$, and $x \in A$. Without loss of generality, we can assume that $A \subseteq \mathbbm{QU}$. Suppose that $y \in \mathbbm{QU}$ is such that the type $\qftp_A(y/(A\setminus \{x\}))$ determines $d(x,y)$. By the triangle inequality, this is possible only when there are $a,a' \in A$ such that
\[ d(y,x)=d(y,a)+d(a,x), \]
\[ d(a',x)=d(a',y)+d(y,x). \]
But then, in particular, $d(x,y) \leq \diam(A)$. Thus, if for some partial automorphism $p$ there was an automorphism $q$ extending $p$ such that any two extensions $r_0$, $r_1$ of $q$ could be amalgamated over $p$, then for every automorphism $\phi$ of $\mathbbm{QU}_{\prec}$ extending $q$, orbits of $\phi$ determined by $p$ would be bounded by $\diam(\zdef(q))$. But it is well known (see, e.g., Section 3.1 in \cite{DoMa}) that every partial isometry $q$ of $\mathbbm{QU}$ with no cyclic orbits can be extended to an isometry of $\mathbbm{QU}$ with unbounded orbits. And since $\mathcal{QU}_{\prec}$ is a full order expansion of $\mathcal{QU}$, every partial automorphism of a finite subspace of $\mathbbm{UQ}_{\prec}$ also can be extended to an automorphism of $\mathbbm{QU}_{\prec}$ with unbounded orbits.
\end{proof}

\begin{remark}
Using a similar approach, and a construction as in the proof of Lemma \ref{le:FreeAm}, one can also prove that the class of partial automorphisms of ordered $K_n$-free graphs does not have WAP, for every $n \geq 3$.
It is not hard to see that this class does not strongly split.
\end{remark}

It is known that the class of partial automorphisms of finite linear orderings has JEP and WAP. In the next two sections, we present two others such order classes: ordered boron trees, and ordered posets.

\subsection{Ordered boron trees - a comeager conjugacy class}
In this section, we  prove that the automorphism group of the universal ordered boron tree 
has a comeager conjugacy class. The class of boron trees was introduced by Cameron \cite{Ca}.

Let $\mathcal{T}$ denote the class of  all graph-theoretic trees such that the valency of each vertex is equal to 1 or 3.
If $T\in\mathcal{T}$  and $a,b,c,d\in T$, we let $ab|cd$ iff arcs $ab$ and $cd$ do not intersect.
To each $T\in\mathcal{T}$ we assign a structure $(B(T), R^{B(T)})$ such that $B(T)$ is the set of endpoints of $T$, and
$R^{B(T)}(a,b,c,d)$ iff $a,b,c,d$ are pairwise different and $ab|cd$. Structures $(B(T), R^{B(T)})$,
together with the one point structure, we call {\em boron trees}, and we denote the class of all these structures by $\mathcal{B}$. The {\em universal boron tree} is the \fra limit of $\mathcal{B}$.

Let $2^{<n}$ denote the set of binary sequences of the length $<n$, including the empty sequence.
Let $T'_n$ denote the binary tree, that is, a graph with the set of vertices equal to $2^{<n}$ and edges
exactly between vertices $s$ and $si$, $i=0,1$, $s\in  2^{<n}$. Let $T_n$ be the  graph obtained by removing 
the vertex $\emptyset $ from $T'_n$ and replacing edges $[\emptyset,0]$ and $[\emptyset,1]$ by 
the edge $[0,1]$, and denote $B_n=B(T_n)$.
Let $\leq^n$ be the lexicographical order on $B_n$, i.e. we let $s\leq^n t$ iff $s=t$ or $s(i)<t(i)$, where $i$ is the least
such that $s(i)\neq t(i)$.
For $s\in 2^{<n}$ we define the {\em height } as the length of $s$, i.e. $\htt(s)=|s|$. 
For a fixed $n$ and $s,t\in T'_n$, we let $s<t$ if $s$ is an initial segment of $t$, and let for $s,t\in T'_n$ the 
{\em meet} of $s$ and $t$, denoted $\meet(s,t)$, be the least upper bound of $s$ and $t$ with respect to 
the partial order $<$. 
Let $(A, R^A)\in\mathcal{B}$ and let $\phi\colon (A, R^A)\to (B_n, R^{B_n})$ be an embedding
(with respect to $R$, this is not necessarily a graph embedding).
We let $\leq_{lex}^A$ to be the order inherited from $\leq^n$ and define a ternary relation $S^A$ on $A$ as follows:  
\[S^A(a,b,c)  \iff  \phi(a), \phi(b)<_{lex}^{n} \phi(c) \text{ and } \htt(\meet(\phi(a),\phi(b)))>\htt(\meet(\phi(b),\phi(c))).\]
There are multiple ways to expand an $(A, R^A)\in\mathcal{B}$ by adding relations $S^A$ and $<^A_{lex}$.
Intuitively, the relation $S^A$ adds a root at an edge of the tree $T$ such that $B(T)=A$, viewed as a graph.
We illustrate the construction of an expansion in Figure \ref{fig:Sandlex}.
Structures $(A,R^A,S^A,\leq_{lex}^A)$, we call {\em ordered boron trees}, and we denote the class of all these structures by $\mathcal{SB}$. The {\em universal ordered boron tree} is the \fra limit of $\mathcal{SB}$. 
 From the work of Jasi\'nski \cite{J}, it follows that the automorphism group of the 
 universal ordered boron tree is extremely amenable.

Note that for $a<_{lex} b<_{lex} c<_{lex} d$ we have $R(a,b,c,d)$ iff 
$S(a,b,c)$ or ($\neg S(a,b,c)$ and $\neg S(b,c,d)$). Therefore, we can recover $R$ from $S$ and $<_{lex}$.

\begin{figure}[H] 
		\begin{tikzpicture}[scale=0.9] 
		\draw [thick] (0,0) -- (0,1);
		\draw [thick] (0,0) -- (-.7,-.9);
			\node[left] at (-.7,-.9) {$c$};
				\draw [thick] (0,0) -- (.7,-.9);
					\node[right] at (.7,-.9) {$b$};
				\draw [thick] (0,1) -- (-.7,1.9);
					\node[above] at (-.7,1.9) {$a$};
				\draw [thick] (0,1) -- (.7,1.9);
					\draw [thick] (.7,1.9) -- (1.5,2.7);
					\node[right] at (1.5,2.7) {$e$};
					\draw [thick] (.7,1.9) -- (1.4,1);
					\node[right] at (1.4,1)  {$d$};
					\draw [fill] (.35,1.45) circle [radius=0.1];
	
	\begin{scope}[xshift=10cm]
	\draw [fill] (0,0) circle [radius=0.1];
		\draw [thick] (0,0) -- (-1,.9);
				\draw [thick] (0,0) -- (1,.9);
					\draw [thick] (1,.9) -- (.3,2.7);
					\node[above] at (.3,2.7)  {$d$};
					\draw [thick] (1,.9) -- (1.7,2.7);
					\node[above] at (1.7,2.7)  {$e$};
							\draw [thick] (-1,.9) -- (-.3,2.7);
								\node[above] at (-.3,2.7) {$c$}; 
							\draw [thick] (-1,.9) -- (-1.7,2.7);
							\node[above] at (-1.7,2.7) {$a$};
							\draw [thick] (-.65,1.8) -- (-1,2.7);
							\node[above] at (-1,2.7) {$b$};
	\end{scope}				
		\end{tikzpicture}	
		\caption{Expanding $(A=\{a,b,c,d,e\}, R^A)$ by  $S^A$ and $<^A_{lex}$} \label{fig:Sandlex}
		\end{figure}
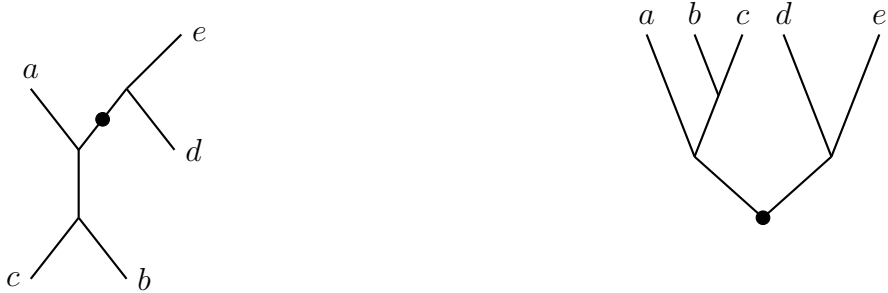

\begin{theorem}\label{boron}
	The family $\SB_1$ has CAP. 
\end{theorem}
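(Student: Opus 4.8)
The plan is to exhibit a cofinal subclass $\mathcal{D}\subseteq\SB_1$ on which the amalgamation property holds; since CAP for $\SB_1$ is by definition the existence of such a $\mathcal{D}$, this suffices. First I would record the structural facts I intend to use. An ordered boron tree $(A,R,S,<_{lex})$ is the same object as a planar rooted binary tree whose leaf-set is $A$, linearly ordered left-to-right by $<_{lex}$, with $S$ encoding the bracketing; in particular $\SB$ is \emph{not} a full order expansion of $\mathcal{B}$, since only planar-compatible orders occur, so the amalgamation must be carried out inside $\SB$ itself rather than through the observation for full order expansions. Because every partial automorphism $p$ preserves $<_{lex}$ and $A$ is finite, $p$ is $<_{lex}$-monotone, so each non-trivial orbit of $p$ is a strictly monotone sequence in $<_{lex}$. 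The decisive second fact is that the type of a single new leaf $x$ over a finite $(A,R,S,<_{lex})$ is rigid: once the $<_{lex}$-gap into which $x$ is inserted is fixed, the only remaining freedom is the height at which the branch carrying $x$ attaches to the existing tree, and the admissible attachment points lie on a single ``valley'' path, hence are \emph{linearly} ordered. It is this one-dimensionality of leaf types --- in sharp contrast with graphs or tournaments --- that makes $\SB$ amalgamate canonically and ultimately yields CAP.

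Next I would define the normal form cutting out $\mathcal{D}$. Starting from an arbitrary $(A,p)\in\SB_1$, I first apply a separation step in the spirit of Lemma \ref{le:SepOrb}: extend $p$ to $q$ so that distinct orbits of $p$ stay in distinct orbits of every further extension, and so that $A=\zdef(q)$. I then enrich the tree-skeleton around each orbit by adjoining finitely many auxiliary leaves (using the insertion freedom available inside $\SB$ together with Lemma \ref{le:FlexExt}) so that, for every orbit $\OOc=\{o_0,\dots,o_n\}$, the attachment type of the prospective next point $p(o_n)$ (and of $p^{-1}(o_0)$) over $\zdef(q)$ is already pinned down along its valley path. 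Declaring $\mathcal{D}$ to be the class of such saturated, orbit-separated $(A,q)$, cofinality is exactly the statement just sketched, namely that every $(A,p)$ embeds into a member of $\mathcal{D}$. The routine but necessary verifications here are that each enrichment can be performed inside $\SB$ and that saturation survives the embeddings used.

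The core of the argument is amalgamation within $\mathcal{D}$. Given $(A,p)\subseteq(B,p_B)$ and $(A,p)\subseteq(C,p_C)$ with $(A,p)\in\mathcal{D}$, I would build $D$ by first forming the canonical $\SB$-amalgam of the underlying ordered boron trees over $A$: points of $B\setminus A$ and of $C\setminus A$ are interleaved in $<_{lex}$ agreeing with $B$ and $C$ on $A$, and whenever a point of $B\setminus A$ and one of $C\setminus A$ must attach along the same valley path, their relative attachment heights are resolved by a fixed rule (say, $C$-points strictly below $B$-points). Because leaf types are one-dimensional, this prescription is always consistent and yields a well-defined $D\in\SB$ with $B\cap C=A$. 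I would then set $p_D=p_B\cup p_C$ and check it is a partial automorphism of $D$: it is a well-defined injection because $p_B$ and $p_C$ agree exactly on $A$, and it preserves $<_{lex}$ by monotonicity of $p_B,p_C$ together with the interleaving. Preservation of $R$ and $S$ is where the normal form does its work: orbit-separation ensures $p_D$ never has to reconcile attachment data from an orbit of $B$ with incompatible data from $C$, while saturation guarantees that the forced attachment heights of images under $p_B,p_C$ are already realized in $\zdef(q)$, so the canonical choice cannot clash with either map.

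The step I expect to be the main obstacle is exactly this last verification, that $p_D=p_B\cup p_C$ preserves $R$ and $S$ on the canonical amalgam. The subtlety is that $p_B$ may move a new leaf $x\in B\setminus A$ across the region into which $C$-points have been inserted, and one must ensure the height the amalgam assigns to $x$ and to $p_B(x)$ is the one demanded by $p_B$; a priori the partial automorphism could require $x$ and a $C$-point to share a height that the fixed rule separated. Preventing this is precisely the role of saturation: by freezing the attachment type of the next orbit element inside $\zdef(q)$ before amalgamating, the relative height of each moved point against the inserted $C$-points is already determined within $A$, leaving the canonical choice no room to conflict with $p_B$ or $p_C$. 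Getting the orbit-by-orbit bookkeeping of attachment heights exactly right, and checking it survives the interleaving, is the technical heart of the proof; the one-dimensionality of boron-tree leaf types is what makes it go through, and is the structural reason $\SB_1$ enjoys CAP while the analogous classes of ordered graphs and tournaments fail even WAP.
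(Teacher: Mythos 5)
Your overall skeleton (a cofinal ``normal form'' class plus a canonical amalgamation of trees) matches the paper's, but both halves of your argument have gaps, and the central idea of the paper's proof is missing. The most concrete failure is in the amalgamation step: you form a strong amalgam $D$ with $B\cap C=A$ and set $p_D=p_B\cup p_C$, asserting this is well defined ``because $p_B$ and $p_C$ agree exactly on $A$''. They do not. If $\OOc=\{o_0,\dots,o_n\}$ is an orbit of $p$ with $p(o_n)$ undefined, then in any nontrivial pair of extensions $p_B(o_n)\in B\setminus A$ and $p_C(o_n)\in C\setminus A$ are defined, and in a strong amalgam these are \emph{distinct} points; so $p_B\cup p_C$ assigns two values to $o_n$ and is not a function. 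Any genuine amalgam of $(B,p_B)$ and $(C,p_C)$ over $(A,p)$ must identify $p_B(o_n)$ with $p_C(o_n)$ and, inductively, must identify the whole orbit closures $A_B=\{y\in B\colon \exists x\in A\,\exists k\in\ZZ\ p_B^k(x)=y\}$ and $A_C$ over $A$. Hence the amalgam is never strong over $A$, and the real content of the theorem is to prove that $A_B$ and $A_C$ are isomorphic over $A$ via a canonical isomorphism. This is exactly what the paper does: from $(A,p)$ in normal form it builds a canonical extension $D_0=A_N$ and shows that $A_B$ and $A_C$ admit \emph{unique} embeddings into $D_0$ agreeing on $A$; only afterwards is the leftover material of $B$ and $C$ attached as ``triangles'' with a fixed interleaving rule (that last step is the counterpart of your left/right convention, and is the easy part).

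The uniqueness just mentioned is what your ``saturation'' is supposed to deliver, and it cannot be obtained the way you describe, because a single orbit cannot rigidify its own extension by having leaves adjoined around it. Concretely, let $\OOc=\{o_0,\dots,o_n\}$ be increasing and meet-increasing with meets $t_i=\meet(o_i,o_{i+1})$, and let $z$ be a point fixed by the partial automorphism, attached to the tree path from $t_{n-1}$ to $o_n$ at a node $h>t_{n-1}$. Preservation of $S$ forces the next point $o_{n+1}$ into the cone at $h$ containing $z$, but inside that cone both $o_{n+1}<_{lex}z$ and $z<_{lex}o_{n+1}$ give consistent one-point extensions in $\SB_1$, and these two extensions cannot be amalgamated over the given structure (an amalgam would have to identify a point $<_{lex}z$ with a point $>_{lex}z$). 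Moreover the ambiguity is permanent: whichever choice is made, the image of $h$ under the induced tree map is $\meet(o_{n+1},z)$, so $o_{n+2}$ is again forced into the cone containing $z$ and the same dichotomy recurs; adjoining further leaves only reproduces it one level up. What actually removes the ambiguity in the paper is a \emph{pair} of orbits: the simple normal form consists of an increasing orbit and a decreasing orbit whose consecutive meets interleave along a common spine (meet-intertwining), so that each orbit locks, via the relation $S$, exactly where the other's next point must attach; the general normal form then partitions the structure into such pairs together with fixed points trapped between them, amalgamates pair by pair, and reassembles. Without this two-orbit mechanism your cofinal class $\mathcal{D}$ does not exist as described, and with it the ``one-dimensionality of leaf types'' slogan is no longer the operative reason the proof works.
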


For  $A\in\SB$, denote by $T_A$ the binary tree 
such that $A=B(T_A)$. For every $A$ there exists  unique such a $T_A$.
The {\em root} of $T_A$, denoted by $\rho_A$, is the $<$-least element of $T_A$.
By $<_A$ we denote the usual tree partial ordering of being an initial segment on elements of 
$T_A$. In the sequel, the structure $A$ in symbols $\leq_{lex}^A$, $<^A$, $R^A$, and $S^A$ will be always clear from the context, so, in order to simplify notation, we will simply write $\leq_{lex}$, $<$, $R$, and $S$,
respectively.

Let $(A,p)\in\SB_1$. 
We say that a non-trivial orbit $O=\{a_0,\ldots, a_n\}$ of $p$ is {\em increasing} if
$a_0<_{lex}  \ldots <_{lex}  a_n$; analogously, we define a {\em decreasing} orbit.
Clearly, every orbit is either increasing, decreasing, or trivial.
Note that, setting $t_i=\meet(a_i, a_{i+1})$,  we either have $t_0< t_1<t_2<\ldots< t_{n-1}$ or
$t_{n-1}< \ldots< t_2< t_1$. In the first case, we  say that $O$ is {\em meet-increasing}, and in the second
that it is {\em meet-decreasing}.
If $(B,p)\in\SB_1$ extends $(A,p)$, then we will denote by $O_B$ the extension 
of $O$ in $B$.

Let $A=(A,p)\in\SB_1$. We will call two orbits $O=\{a_0,\ldots,a_m\}$ and $P=\{b_0,\ldots, b_n\}$ of $p$ 
{\em intertwining} if the $\leq_{lex}$-intervals $(\min\{a_0,a_m\}, \max\{a_0,a_m\})_{lex}$ and 
 $(\min\{b_0,b_n\}, \max\{b_0,b_n\})_{lex}$ intersect. Say that $O$ is {\em $\leq_{lex}$-contained} in $P$ if $(\min\{a_0,a_m\}, \max\{a_0,a_m\})_{lex}$
is contained in $(\min\{b_0,b_n\}, \max\{b_0,b_n\})_{lex}$. 
A point $x\in A$ is {\em meet-locked} by $O$ if
for every extension $(B,q)$ of $(A,p)$ such that $a_{-1}=q^{-1}(a_0)$ and $a_{m+1}=q(a_m)$ are defined,
denoting $t_i=\meet(a_i, a_{i+1})$, we have the following.
(1) If $O$ is increasing and meet-increasing, then $a_{m+1}<_{lex} x$ and $t_{-1}< \meet(x, a_{m+1})<t_m$, 
(2) if $O$ is decreasing and meet-increasing, then $x <_{lex} a_{m+1}$ and $t_{-1}< \meet(x, a_{m+1})<t_m$, 
(3) if $O$ is increasing and meet-decreasing, then $x<_{lex} a_{-1}$ and $t_{-1}<\meet(x, a_{-1})<t_m$, 
(4) if $O$ is decreasing and meet-decreasing, then $a_{-1} <_{lex} x$ and $t_{-1}<\meet(x, a_{-1})<t_m$.
Two orbits $O$ and $P$ are {\em meet-intertwining}
if there is $x\in O$ meet-locked by $P$ or there is $x\in P$ meet-locked by $O$.
Note that if $O$ and $P$ are meet-intertwining then one of them is increasing and the other one is decreasing.
Moreover, either both are meet-increasing or both are meet-decreasing.

We call a {\em cone} any set  ${\rm Cone}_t=\{s\in A\colon t\leq s\}$, for some $t\in T_A$.
The  {\em root} of the orbit $O$ is the meet $t_O\in T_A$ of all points
$a_0,\ldots, a_n$ (which, in fact is the meet of two first elements in the orbit, if the orbit is meet-increasing, or the last two, if it is meet-decreasing), and the cone ${\rm Cone}_O$ of $O$ is defined as ${\rm Cone}_{t_O}$.
Note that any two cones  are either disjoint or one is contained in the other.
Denote by ${\rm Cone}(p)$ the collection of all cones of orbits of $p$. 


For $A\in\SB$, by a {\em segment} we mean an ordered pair $(x,y)$ with $x,y\in T_A$ such that $x< y$ and there is no 
$z\in T_A$ satisfying $x< z< y$. 
For  $A, E\in\SB$ and a segment $(x,y)$ in $A$, let $K=A(x,y,E,\emptyset)\in \SB$  
be the result of attaching $E$ to $A$ on $(x,y)$ on the left.
Specifically, think that elements of each of $T_A$ and $T_E$ are binary sequences, in particular, if $x=s$ and
$y=t$, then $t=s0$ or $t=s1$.
We let $T_K$ to consist of the following binary sequences.
If $r\in T_A$ does not extend properly $x=s$, we let $r\in T_K$. 
We let $t\in T_K$.
If  $tr\in T_A$ for some $r$, we let $t1r\in T_K$ and if  $r\in T_E$, let  $t0r\in T_K$. This defines $K\in\SB$.
Analogously, define $K=A(x,y,\emptyset, F)\in \SB$ as the result of attaching $E$ to $A$ on $(x,y)$ on the right.
More generally, for  $A \in \mathcal{SB}, \mathcal{E}=(E_1,\ldots, E_m)$ and $\mathcal{F}=(F_1,\ldots,F_n)$ 
and a segment $(x,y)$ in $A$, we define 
$K=A(x,y,\mathcal{E}, \mathcal{F})\in \SB$ as the result of attaching  $E_1, \ldots, E_m$
to the segment $(x,y)$ on the left in a way that $E_1<_{lex} \ldots <_{lex} E_m$ and
attaching $F_1, \ldots, F_n$
to the segment $(x,y)$ on the right in a way that $F_1<_{lex} \ldots <_{lex} F_n$
and the root of each $E_i$ is below the root of each $F_j$.
In that case, we may also write $(x,y,\mathcal{E}, \mathcal{F})$ for 
$\{z\in T_K\colon x< \meet(y,z)< y\}$.

We say that $(A,p) \in \SB_1$ is in a {\em simple normal form} if
(1) there are orbits 
$P=\{a_0,\ldots, a_n\}<_{lex} Q=\{b_0,\ldots, b_n\}$, $n\geq 2$,
of $p$, such that
 for every $i=0,\ldots,n-1$, $\meet(a_i,a_{i+1})<\meet(b_i,b_{i+1})$
and for every $i=0,\ldots,n-2$, $\meet(b_i,b_{i+1})<\meet(a_{i+1},a_{i+2})$, 
(2) any non-trivial orbit $O=\{c_0,\ldots,c_l\}$ in $A$ is $\leq_{lex}$-contained in
$P$ or in $Q$ and it holds $l=n-1$, 
(3)  for any $x$ with $p(x)=x$ it holds $\max\{a_0,a_n\}<_{lex} x<_{lex} \min\{b_0,b_n\}$,
where $\min$ and $\max$ are taken with respect to the $<_{lex}$ order.

We say that $(A,p)$ is in a {\em  normal form} if
(1) $A=\zdef(p)$, and any non-constant orbit of $p$ has at least 3 elements,
(2) $p$ cannot be extended
to a partial automorphism $q$ such that some two orbits that did not intertwine (or meet-intertwine) in 
$p$ now they intertwine (or meet-intertwine, respectively) or they form  one orbit
in $q$, and (3) there is a partition $\mathcal{P}^A$ of $A$ into 
singletons $\{x\}$ and
closed $\leq_{lex}$-intervals that will be grouped into pairs  $([a,b], [c,d])$ so that
if $\{x\}\in\mathcal{P}^A$, then $p(x)=x$, and if $([a,b], [c,d])\in\mathcal{P}^A$, 
then the structure $p\restriction ([a,b]\cup [c,d])$ is in a simple normal form without a non-trivial orbit, witnessed by some
$P=\{a_0,\ldots, a_n\}$
and $Q=\{b_0,\ldots, b_n\}$ with $a=a_0$, $b=a_n$, $c=b_n$, $d=b_0$ (or $b=a_0$, $a=a_n$, $d=b_n$, $c=b_0$).
We will sometimes identify $([a,b], [c,d])\in\mathcal{P}^A$ with the set $[a,b]\cup [c,d]$.

\begin{lemma}
Any $(A,p)\in\SB_1$ can be extended to $(A',p')\in\SB_1$, which is in a normal form.
\end{lemma}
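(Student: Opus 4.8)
The plan is to reach the normal form through a finite sequence of extensions in $\SB_1$, each establishing one of the three defining conditions while preserving those already secured. I would first make the harmless reductions. Since elements of $\SB_1$ are identified with $(\zdef(p),p)$, I may assume $A=\zdef(p)$. Because $\leq_{lex}$ belongs to the signature of $\SB$, the partial automorphism $p$ preserves $\leq_{lex}$, so no non-trivial orbit of $p$ can be cyclic; every orbit is trivial, increasing, or decreasing, and in particular has a first and a last element. This is what makes the later bookkeeping possible. To obtain Condition~(1), I would lengthen each non-trivial orbit of length $2$ by one application of $p$: given such an orbit $\{a_0,a_1\}$ (with $a_1\in\rng(p)\setminus\dom(p)$), amalgamation in $\SB$ produces a leaf $a_2$ with $p\cup(a_1,a_2)$ a partial automorphism, and by Lemma~\ref{le:FlexExt} I am free to place $a_2$ in any $\leq_{lex}$-position realizing the correct type over $\rng(p)$. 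Repeating, every non-trivial orbit attains length $\ge 3$.

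The heart of the argument is the geometric reorganization yielding Condition~(3). Using the note that two orbits can be meet-intertwining only if one is increasing and the other decreasing, I would pair the non-trivial orbits into meet-intertwining pairs: for a non-trivial orbit $O$ with tree-meets $t_0<\dots<t_{n-1}$, I attach, along the segments lying between consecutive $t_i$ via the operation $A(x,y,\mathcal{E},\mathcal{F})$, a fresh orbit $O'$ of the same length and opposite monotonicity, choosing the attachment points so that the meets of $O'$ strictly interleave those of $O$. This realizes the alternation $\meet(a_i,a_{i+1})<\meet(b_i,b_{i+1})<\meet(a_{i+1},a_{i+2})$ of the simple normal form and places $P=O$, $Q=O'$ into a single block. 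When two original orbits already meet-intertwine, I instead use them as a block directly, after padding the shorter one by the length-extension step so that both have the common length $n$. All fixed points of $p$ are then moved, again by attaching, so as to lie $\leq_{lex}$-between the two orbits of some block as demanded by Condition~(3) of the simple normal form, and distinct blocks are placed in pairwise disjoint cones so that their $\leq_{lex}$-intervals are disjoint and their spines do not interleave. This produces the partition $\mathcal{P}^A$ into singletons and paired closed intervals, each pair in simple normal form without further non-trivial orbits.

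It remains to verify the saturation Condition~(2): that no extension $q$ of the resulting $(A',p')$ creates a new intertwining or meet-intertwining between two orbits, nor fuses two orbits into one. Within a block the two orbits already meet-intertwine, and since they have length $\ge 3$ with strictly alternating meets, the relevant meet-heights are pinned down by actual points of $A'$, so the meet-locked clauses hold in \emph{every} extension automatically. Across blocks, placement in disjoint cones forces any newly added meet to lie inside a single cone, so it cannot interleave the spines of two different blocks, and lex-separation likewise blocks the appearance of a new ordinary intertwining. Finally, fusing two orbits would require an extension carrying the last point of one orbit to the first point of another while preserving $\leq_{lex}$ and the meet structure; the disjoint-cone placement together with order-preservation makes the required bridge impossible. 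Tracking these three impossibilities yields Condition~(2).

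The step I expect to be the main obstacle is reconciling the \emph{pre-existing} meet-intertwining relations of $p$ with the rigid pairwise block structure. A single increasing orbit may meet-intertwine with several decreasing ones, and since extension cannot delete meets already present among the original points, one cannot simply separate them; one must instead show that extending the orbits to suitable equal lengths either forces the competing partners into one consistent block or detaches them, and that this process terminates. Because both \emph{intertwining} and \emph{meet-locked} are defined by quantifying over \emph{all} extensions, the delicate point is proving that a finite extension genuinely \emph{stabilizes} these relations — precisely the content of Condition~(2) — rather than merely deferring new intertwinings. This termination argument, carried out through the meet-height bookkeeping along each orbit, is the technical core, alongside the routine verifications that every attaching step remains inside $\SB$ and preserves the conditions already established.
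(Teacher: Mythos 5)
Your construction is left incomplete at exactly the point you flag in your final paragraph, and that point is not a peripheral technicality but the core content of the lemma. You propose to give every non-trivial orbit its own freshly attached partner of opposite monotonicity, and to place the resulting blocks ``in pairwise disjoint cones so that their $\leq_{lex}$-intervals are disjoint.'' This is impossible in general: if two original orbits already intertwine, i.e.\ their $\leq_{lex}$-intervals intersect among the points of $A$, then no extension can make those intervals disjoint, so all orbits linked by a chain of intertwinings or meet-intertwinings are forced into a \emph{single} partition element $([a,b],[c,d])$ of the normal form. But such a block has only two full-length spine orbits $P$ and $Q$; every other non-trivial orbit inside it must be $\leq_{lex}$-contained in $P$ or in $Q$ and have exactly one element fewer (condition (2) of the simple normal form). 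Your proposal handles only the case of two orbits that already meet-intertwine (``use them as a block directly''), and for longer or branching chains — one increasing orbit meet-intertwining with several decreasing ones, or a chain of ordinary intertwinings — you explicitly defer to an unproved termination argument. That deferred step is the missing idea, not a routine verification, so the proposal has a genuine gap.

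The paper closes this gap with a different organizing device. It defines an equivalence relation on the orbits of $p$: two orbits are equivalent iff they are linked by a finite chain of orbits in which consecutive ones intertwine or meet-intertwine. Each equivalence class either is a singleton consisting of a trivial orbit, or contains no trivial orbit. In the latter case one first extends $(A,p)$, if necessary, so that the class contains a meet-intertwining pair $P_0 <_{lex} Q_0$ (this is where your ``fresh partner'' move appears — but once per class, not once per orbit), and then extends $P_0$ to $P$ and $Q_0$ to $Q$ so that \emph{every} other orbit of the class becomes $\leq_{lex}$-contained in $P$ or in $Q$, finally extending the remaining orbits so that condition (2) of the simple normal form (the length requirement) holds. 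The argument is an induction on the number of equivalence classes. This absorption of an entire class into two spines is precisely the mechanism your pairwise construction lacks; with it, the stabilization problem you worry about does not arise, because the class structure is determined at the outset and each class is converted into a block in one step.
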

\begin{proof}
Conditions (1) and (2) can be easily satisfied. To have (3), 
consider the equivalence relation on orbits: $O$ and $P$ are equivalent iff there is a sequence of orbits
$O=O_1,\ldots, P=O_n$ such that for each $i$, $O_i$ and $O_{i+1}$ intertwine or meet intertwine.
An equivalence class $E$ either  is a singleton  containing a  constant orbit, or it does not contain a constant orbit.
In the second case, after extending $A$ if necessary, the class $E$ contains two meet-intertwining orbits 
$P_0 <_{lex}Q_0$ (there are usually many such choices).
Extend $P_0$ to $P$ and $Q_0$ to $Q$ so that every orbit in $E$ is contained in $P$
or in $Q$. Finally, extend the remaining orbits in $E$ so that (2) in the definition of simple normal form is satisfied.
We do the induction on the number of equivalence classes $E$.
\end{proof}

Let $(A,p) \in\SB_1$ be in a normal form 
and let $X\in\bP^A$. Then $(X,p_X) \in \SB_1$, where $p_X=p\restriction X$, is in a simple normal form.
The tree $T_X$ can be naturally identified with a 
subtree of $T_{A}$ (in fact, $T_X$ is the  closure of $X$ in $T_A$ under taking the meet),
let $\rho_X$ be the root of $T_X$, and let $\Cone_X=\Cone_{\rho_X}$.
To $X$ as above we  associate $X^* \in\SB$, and $(X^*,p_X^*) \in \SB_1$ in a simple normal form as follows.
If $X=\{x\}$, let $p^*(x)=x$. Otherwise, if $X=([a,b],[c,d])$, we let 
\[X^*=X\cup \{ z=\Cone_{Y}\colon z \text{ is  maximal }  \text{ in }
(\{ \Cone_{Z}\subsetneq \Cone_{X} \colon Z\in\bP^A\},\subseteq)\  \}
\]
%
and we let $p_X^*$ to be the extension of $p_X$ that is equal to the identity on $X^*\setminus X$.
The set of all $X^*$ obtained in this way we denote by $(\bP^A)^*$.
Definitions introduced above are illustrated in Example \ref{exdef}.

\begin{proof}[Proof of Theorem \ref{boron}]

	Let $(A,p)\in\SB_1$ and consider extensions $(B,q), (C,r) \in\SB_1$ of $(A,p)$. By $\phi$, $\psi$, we denote the identity embeddings of $(A,p)$ into $(B,q)$, $(C,r)$, respectively. We show that if $(A,p)$ is in a normal form, then we can  amalgamate $(B,q)$ and $(C,r)$ over $(A,p)$. Since the family of all elements in a normal form is cofinal in $\SB_1$, this will finish the proof.

	{\bf{(1)}} Suppose that $(A,p)$ is in a simple normal form.
	
	Let $P=\{a_0,\ldots, a_n\}$ and $Q=\{b_0,\ldots, b_n\}$ be as in the definition of the simple normal form. 
	Without loss of generality, $P$ is increasing, and hence $Q$ is decreasing.
	Set $t_i=\meet(a_i,a_{i+1})$ and let $s_i=\meet(b_i,b_{i+1})$. Note that all trees $T_{X_i}$ with
	$X_i=[a_i,a_{i+1}]_{lex}$ are isomorphic, and all trees $T_{Y_i}$ with
	$Y_i=[b_{i+1}, b_i]_{lex}$ are isomorphic as well.

	Pick some $N$  such that each of $q^{-N}(a_0)$, $q^{N}(a_n)$, $q^{-N}(b_0)$, $q^{N}(b_n)$,
	$r^{-N}(a_0)$, $r^{N}(a_n)$, $r^{-N}(b_0)$, $r^{N}(b_n)$ is undefined.
	Let $\overline{\phi}\colon T_A\to T_B$ and $\overline{\psi}\colon T_A\to T_C$ be the unique 
	meet-preserving extensions of $\phi$ and $\psi$.
	
	For every $k$, let $(A_k,p_k)$ be defined as follows. Take an extension $p'$ of $p$ such that
	for every $x\in[a_0,a_1)_{lex}\cup (b_1,b_0]_{lex}$, the values $q^{n+k}(x)$ and $q^{-k}(x)$ are defined
	and every orbit in $p'$ extends an orbit in $p$. Then let
	\[ p_k=p'\restriction [a_{-k}, a_{n+k}]_{lex}\cup\{(c,c)\colon  c \in A, \ a_n<_{lex} c <_{lex} b_n\}\cup p'\restriction [b_{-k},b_{n+k}]_{lex},\]
	where $a_i=q^i(a_0)$ and  $b_i=q^i(b_0)$, and let $A_k=\zdef(p_k)$.  
	Note that $p= p_0$ and that each $p_k$ 
	is in a simple normal form as witnessed by $P_k=\{a_{-k},\ldots, a_{n+k}\}$ and 
	$Q_k=\{b_{-k},\ldots, b_{n+k}\}$. 
	Consider  $D_0=A_N$, $s_0=p_{N}$ for $N$ as above.

	Let 
	\[ A_B=\{y\in B\colon (\exists x\in A, k\in\mathbb{Z})\ q^k(x)=y\}, \]
	and define $A_C$ similarly.
	Let $\alpha\colon A_B\to D_0$ and $\beta\colon A_C\to D_0$ be the unique embeddings that agree on $A$. 
	Denote by
	$\bar{\alpha}\colon T_{A_B}\to T_{D_0}$ and $\bar{\beta}\colon T_{A_C}\to T_{D_0}$
	the tree embeddings corresponding to $\alpha$ and $\beta$.
	We clearly have $\rho_B\leq\overline{\phi}(\rho_A)$ or $\rho_C\leq\overline{\psi}(\rho_A)$,
	where $\rho_A, \rho_B, \rho_C$ are roots of $T_A, T_B, T_C$, and both inequalities can be strict.
	Then, to obtain the required amalgam $(D,s)$, first consider
	$\widehat{T}_{D_0}$  obtained from $T_{D_0}$ by adding  a new point $v$, which
	satisfies $v< \rho_{D_0}$, where $\rho_{D_0}$ is the root of $T_{D_0}$.
	Next, fix a segment $(x,y)$  in $\widehat{T}_{D_0}$, and let
	$(x^B, y^B)=(\bar{\alpha}^{-1}(x),\bar{\alpha}^{-1}(y))$, if defined,  
	and $(x^C,y^C)=(\bar{\beta}^{-1}(x),\bar{\beta}^{-1}(y))$, if defined.
	Suppose that  $\mathcal{E}^B$, $\mathcal{E}^C$,
	$\mathcal{F}^B$, $\mathcal{F}^C$ are such that 
	$\{z\in T_B\colon x^B<\meet(y^B,z)< y^B\}$ can be identified with 
	$(x^B,y^B, \mathcal{E}^B,\mathcal{F}^B)$ and 
	$\{z\in T_C\colon x^C<\meet(y^C,z)< y^C\}$ can be identified with 
	$(x^C,y^C,\mathcal{E}^C,\mathcal{F}^C)$.
	Then replace $(x,y)$ in $\widehat{T}_{D_0}$
	by $(x,y,(\mathcal{E}^B\mathcal{E}^C), (\mathcal{F}^C\mathcal{F}^B))$, where 
	$(\mathcal{E}^B\mathcal{E}^C)$ and  $(\mathcal{F}^C\mathcal{F}^B)$ are concatenations of sequences
	$\mathcal{E}^B$ with $\mathcal{E}^C$ and $\mathcal{F}^C$ with $\mathcal{F}^B$, respectively.
	We additionally require that the root of every $T_{E^C_i}$ is above the root of every $T_{F^B_j}$ and
	the root of every $T_{F^C_i}$ is above the root of every $T_{E^B_j}$, where 
	$E^C_i\in \mathcal{E}^C$, etc.
	The obtained tree $T$ defines
	$D\in\SB$ such that $T_D=T$, 
	and it defines embeddings $\alpha\colon B\to D$ and $\beta\colon C\to D$ of structures in $\SB$.
	We let $s(a)=b$ iff $\alpha^{-1}(a),\alpha^{-1}(b)$ are defined and $q(\alpha^{-1}(a))=\alpha^{-1}(b)$,  or
	$\beta^{-1}(a),\beta^{-1}(b)$ are defined and $r(\beta^{-1}(a))=\beta^{-1}(b)$, or
	$p^N(a)=b$.
	For every segment $(x,y)$ in $\widehat{T}_{D_0}$ and $z\in T_D$, $x<z< y$, we will call the 
	subtree $\Cone_z$ of $T_D$ a {\em triangle} (coming from $B$ or from $C$).

	We have to show that $s$ is a partial automorphism of $D$. Clearly $<_{lex}$ is preserved.
	Let $\bar{s}_0$ be the meet-preserving extension
	of $s_0$ to $\widehat{T}_{D_0}$. Then, clearly, a triangle attached to a segment $(x,y)$ is mapped
	to a triangle attached to a segment $(\bar{s}_0(x),\bar{s}_0(y))$. The key observation is 
	that for every $k$, and in particular for $k=N$, and every $X\subseteq \dom(p_k)$, the tree 
	$T_X$ are isomorphic with the tree  $T_{s_0(X)}$ via the
	tree isomorphism extending the bijection $x\mapsto  s_0(x)$, $x\in X$. 
	Moreover, for $a,b\in D$ that lie in different triangles, $\meet(a,b)$ is equal to the meet of the roots of the
	triangles to which $a$ and $b$ belong. Note also that if $\rho \leq \rho'$ are roots of two triangles 
	then $\bar{s}(\rho)\leq \bar{s}(\rho')$.
	Therefore, if $x,y,z\in D$ lie in different triangles, we have $S(x,y,z)$ iff $S(s(x),s(y),s(z))$.
	Clearly, if all $x,y,z$ lie in the same triangle, then the conclusion holds. 
	If $x\leq_{lex} y \leq_{lex} z$ and $x$ and $y$ lie in a triangle $T$ and $z$ lies in a different triangle $S$,
	if $\rho_T$ and $\rho_S$ denote roots of $S$ and $T$, then $S(x,y,z)$ holds iff $\rho_S< \rho_T$ and
	$S(s(x),s(y),s(z))$ holds iff $\bar{s}_0(\rho_S)< \bar{s}_0(\rho_T)$, hence 
	$S(x,y,z)$ iff $S(s(x),s(y),s(z))$.
	The case when $y$ and $z$ lies in the same triangle and $x$ lies in a different one is analogous.

	{\bf{(2)}} Suppose that $(A,p)$ is in  a normal form.
	
	Without loss of generality, $(B,q)$ and $(C,r)$ are also in the normal form. 
	Let $\widehat{\bP}^B$ be the partition of $B$ into points and pairs of closed $\leq_{lex}$-intervals, which is a coarsening of $\bP^B$,
	and has the following property: 
	for every $([a,b],[c,d])\in\widehat{\bP}^B$ there is exactly  one  $([a_0,b_0],[c_0,d_0])\in\bP^A$ 
	such that $[a_0,b_0]\subseteq [a,b]$ and $[c_0,d_0]\subseteq [c,d]$.
	We define $(\bP^A)^*$ out of $\bP^A$, and the corresponding partial automorphisms $p_X^*$, in a way explained earlier. We define $(\widehat{\bP}^B)^*$ and $(\widehat{\bP}^C)^*$ similarly, but with respect to 
	 $\widehat{\bP}^B$ and  $\widehat{\bP}^C$.  Let
	\[X^*=X\cup \{ z=\Cone_{Y}\colon z \text{ is  maximal }  \text{ in }
(\{ \Cone_{Z}\subsetneq \Cone_{X} \colon Z\in \widehat{\bP}^B\},\subseteq)\  \},
\] and we let $p_X^*$ to be the extension of $p_X$ that is equal to the identity on $X^*\setminus X$.
The set of all $X^*$ obtained in this way we denote by $(\widehat{\bP}^B)^*$. We similarly define
	$(\widehat{\bP}^C)^*$.
	For a given  $X \in\bP^A$, let
	$X_B \in \widehat{\bP}^B$ and
	$X_C \in \widehat{\bP}^C$ be the unique sets containing $X$.
	Amalgamate $(X^*_B, q^*_{X_B})$ and $(X^*_C, r^*_{X_C})$ over $(X^*, p^*_{X})$, as we did in (1). We obtain $D_X\in\SB$, a partial automorphism $s_X$ of $D_X$, and a pair of embeddings 
	$\alpha_X\colon X^*_B\to (D_X,s_X)$ and $\beta_X\colon X^*_C\to (D_X,s_X)$
	 such that $\alpha_X \restriction X^*=\beta_X \restriction X^*$.  
	Let $\rho_X$ be the root of $T_X$, and 
	let $\rho_1^X, \ldots, \rho_{l_X}^X$ be an enumeration of all  $z=\Cone_Y$
	from the definition of $X^*$. To the tuple $(X^*, \rho_X, \rho_1^X, \ldots, \rho_{l_X}^X)$ we associate the tuple
	$(D_X, \rho_{D_X}, \rho_1^{D_X}, \ldots, \rho_{l_X}^{D_X})$, where $\rho_{D_X}$ is the root of $T_{D_X}$, and
	$\rho_i^{D_X}=\alpha_X(\rho_i^X)=\beta_X(\rho_i^X)$ for each $i$.
	By the definition of $(\bP^A)^*$, the tree $T_A$ is the disjoint union of 
	$\{T_{X^*}\colon X\in \bP^A\}$ after the identification of each $\rho_i^{X}$ 
	with a $\rho_{Y}$ for some $Y\in (\bP^A)$. 
	Finally, let $T_D$ be obtained by taking the disjoint union of 
	$\{T_{D_{X}}\colon X\in \bP^A\}$, and then identifying a $\rho_i^{D_{X}}$ with a $\rho_{D_{Y}}$ if and only if $\rho_i^{X}$ was identified with a $\rho_{Y}$, $X,Y\in \bP^A$. 
	This $T_D$  defines $D\in\SB$ 
	and let $s=\bigcup_{X\in\bP^A} s_X\restriction X$. Then $(D,s)\in \SB_1$. Clearly, $s$ preserves $<_{lex}$. 
	To show that it preserves $S$, observe first the following. 
	
	(i) If $x\in D_X$ and $y\in D_Y$ with $X\neq Y$, and $\rho_{D_X}$ and $\rho_{D_Y}$ are incomparable
	in $(T_D, \leq)$, then $\meet(x,y)=\meet(\rho_{D_X},\rho_{D_Y})=\meet(s(x), s(y))$.
	
	(ii)  If $x\in D_X$ and $y\in D_Y$ with $X\neq Y$ and $\rho_{D_X}\leq  \rho_{D_Y}$, then
	$\meet(x,y)=\meet(x,\rho_{D_Y})$ and $\meet(s(x), s(y))=\meet(s(x), \rho_{D_Y})$.
	Moreover, in that case, if $y'\in D_Y$, then $\meet(x,y),\meet(x,y')< \meet(y,y')$.
	
	Let $(x,y,z)$ be a $<_{lex}$ ordered triple of points in $D$. There are several cases to consider. 
	Clearly, if there is some  $X\in \bP^A$ such that $x,y,z\in D_X$, then $S(x,y,x)$ iff $S(s(x),s(y),s(z))$.
	If $x,y\in D_X$, $z\in D_Y$ with $X\neq Y$  and  $\rho_{D_X}$ and $\rho_{D_Y}$ are incomparable,
	then  both $S(x,y,z)$ and
	$S(s(x),s(y),s(z))$ hold.
	Similarly, if $x\in D_X$ and $y,z\in D_Y$ with $X\neq Y$ and  $\rho_{D_X}$ and $\rho_{D_Y}$ are incomparable
	then none of $S(x,y,z)$,
	$S(s(x),s(y),s(z))$ holds.
	In the case when $\rho_{D_X}\leq \rho_{D_Y}$, two of the $x,y,z$ belong to $D_X$ and the remaining point belongs to $D_Y$, $X\neq Y$, then using (ii) we get that $s$ preserves $S$ on $(x,y,z)$ because
	$p_X$ preserves $S$ (the point belonging to $D_Y$ we can replace 
	with an appropriate $\rho^{D_X}_i$). If  two of the $x,y,z$ belong to $D_Y$ and the remaining point belongs to $D_X$, use the second sentence of (ii) to get the conclusion.
	Finally, suppose that $x\in D_X$, $y\in D_Y$, $z\in D_Z$ with $X,Y,Z$ pairwise different. There are a few cases
	to consider, $\rho_{D_X}$  and $\rho_{D_Y}$ can be  comparable or not, and the  same for the other
	two pairs. Each time, reasoning similarly as above and using (i) and (ii), we get the required conclusion.
	
\end{proof}

\begin{example}\label{exdef}
	{\rm{
			Let $(A,p)\in\SB_1$ be as in Figure \ref{fig:M1} with $p(a_0)=a_1$, $p(a_1)=a_2$, $p(a'_0)=a'_1$, $p(a'_1)=a'_2$, etc.
			Then $\mathcal{P}^A=\{X=([a_0,a_2], [a'_2,a'_0]), Y=([b_0,b_3], [b'_3,b'_0]), Z=([d_0,d_2], [d'_2,d'_0])\}$.
			Figure \ref{fig:M2} illustrates $X^*,Y^*, Z^*$, where $p^*_Z(x)=x$ and $p^*_Z(y)=y$.}}

	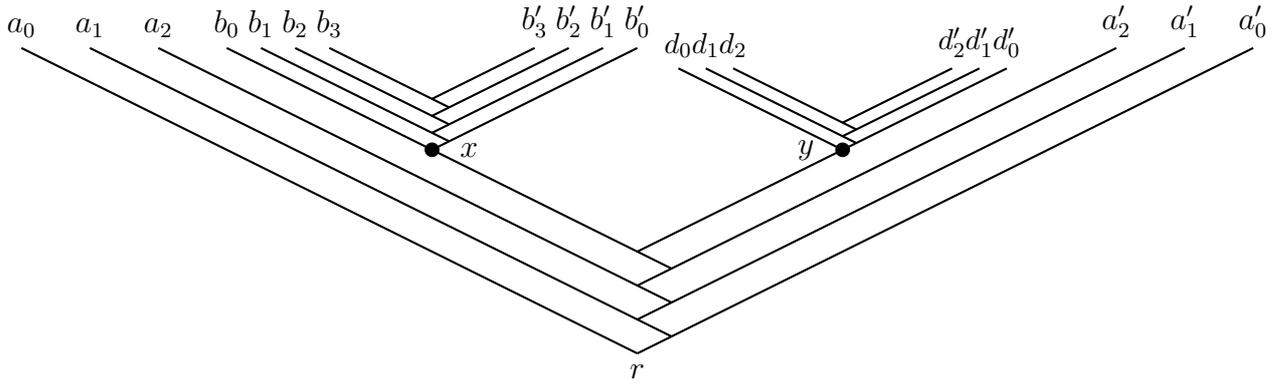
\begin{figure}[H]
		\begin{tikzpicture}[scale=0.9]
		\draw [thick] (0,0) -- (-9,4.5) ;
		\draw [thick] (.5,.25) -- (-8,4.5);
		\draw [thick] (.5,.75) -- (-7,4.5);
		\draw [thick] (0,0) -- (9,4.5);
		\draw [thick] (0,.5) -- (8,4.5);
		\draw [thick] (0,1) -- (7,4.5);
		\node[above] at (-9,4.5) {$a_0$}; 
		\node[above] at (-8,4.5) {$a_1$}; 
		\node[above] at (-7,4.5) {$a_2$};  
		\node[above] at (9,4.5) {$a'_0$}; 
		\node[above] at (8,4.5) {$a'_1$}; 
		\node[above] at (7,4.5) {$a'_2$}; 
		\draw [thick] (.5, 1.25) -- (-3,3);
		\draw [thick] (0,1.5) -- (3,3);
		\node[right] at (-2.75,3) {$x$};
		\node[left] at (2.75,3) {$y$};
		\node[below] at (0,0) {$r$};
		
		\draw [thick] (-3,3) -- (-6/2-3,3/2+3);
		\draw [thick] (.5/2-3,.25/2+3) -- (-5/2-3,3/2+3);
		\draw [thick] (.5/2-3,.75/2+3) -- (-4/2-3,3/2+3);
		\draw [thick] (.5/2-3,1.25/2+3) -- (-3/2-3,3/2+3);
		\draw [thick] (-3,3) -- (6/2-3,3/2+3);
		\draw [thick] (-3,.5/2+3) -- (5/2-3,3/2+3);
		\draw [thick] (-3,1/2+3) -- (4/2-3,3/2+3);
		\draw [thick] (-3,1.5/2+3) -- (3/2-3,3/2+3);
		\node[above] at (-6/2-3,3/2+3) {$b_0$}; 
		\node[above] at (-5/2-3,3/2+3) {$b_1$}; 
		\node[above] at (-4/2-3,3/2+3) {$b_2$}; 
		\node[above] at (-3/2-3,3/2+3) {$b_3$}; 
		\node[above] at (6/2-3,3/2+3) {$b'_0$}; 
		\node[above] at (5/2-3,3/2+3) {$b'_1$}; 
		\node[above] at  (4/2-3,3/2+3) {$b'_2$}; 
		\node[above] at (3/2-3,3/2+3) {$b'_3$}; 
		
		\draw [thick] (3,3) -- (-6/2.5+3,3/2.5+3);
		\draw [thick] (.5/2.5+3,.25/2.5+3) -- (-5/2.5+3,3/2.5+3);
		\draw [thick] (.5/2.5+3,.75/2.5+3) -- (-4/2.5+3,3/2.5+3);
		\draw [thick] (3,3) -- (6/2.5+3,3/2.5+3);
		\draw [thick] (3,.5/2.5+3) -- (5/2.5+3,3/2.5+3);
		\draw [thick] (3,1/2.5+3) -- (4/2.5+3,3/2.5+3);
		\node[above] at (-6/2.5+3,3/2.5+3) {\small{$d_0$}}; 
		\node[above] at  (-5/2.5+3,3/2.5+3) {\small{$d_1$}}; 
		\node[above] at (-4/2.5+3,3/2.5+3) {\small{$d_2$}}; 
		\node[above] at (6/2.5+3,3/2.5+3) {\small{$d'_0$}}; 
		\node[above] at (5/2.5+3,3/2.5+3) {\small{$d'_1$}}; 
		\node[above] at (4/2.5+3,3/2.5+3) {\small{$d'_2$}}; 
		
		\draw [fill] (-3,3) circle [radius=0.1];
		\draw [fill] (3,3) circle [radius=0.1];

		\end{tikzpicture}
		\caption{A structure $A$} \label{fig:M1}
	\end{figure}

	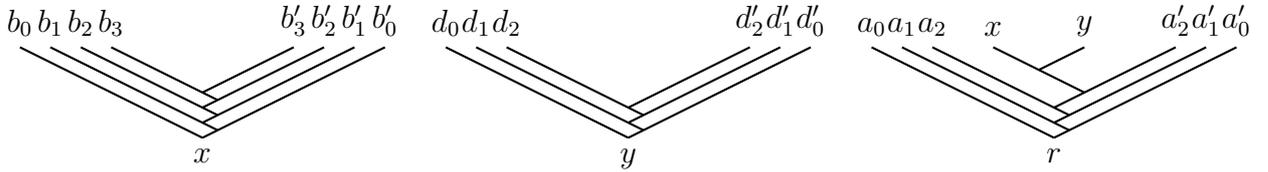
\begin{figure}[H]
		\begin{tikzpicture}[scale=.4]
		\draw [thick] (0,0) -- (-6,3);
		\draw [thick] (.5,.25) -- (-5,3);
		\draw [thick] (.5,.75) -- (-4,3);
		\draw [thick] (.5,1.25) -- (-3,3);
		\draw [thick] (0,0) -- (6,3);
		\draw [thick] (0,.5) -- (5,3);
		\draw [thick] (0,1) -- (4,3);
		\draw [thick] (0,1.5) -- (3,3);
		\node[above] at (-6,3) {$b_0$}; 
		\node[above] at (-5,3) {$b_1$}; 
		\node[above] at (-4,3) {$b_2$}; 
		\node[above] at (-3,3) {$b_3$}; 
		\node[above] at (6,3) {$b'_0$}; 
		\node[above] at (5,3) {$b'_1$}; 
		\node[above] at (4,3) {$b'_2$}; 
		\node[above] at (3,3) {$b'_3$}; 
		\node[below] at (0,0) {$x$}; 
		
		\begin{scope}[xshift=14cm]
		\draw [thick] (0,0) -- (-6,3);
		\draw [thick] (.5,.25) -- (-5,3);
		\draw [thick] (.5,.75) -- (-4,3);
		\draw [thick] (0,0) -- (6,3);
		\draw [thick] (0,.5) -- (5,3);
		\draw [thick] (0,1) -- (4,3);
		\node[above] at (-6,3) {$d_0$}; 
		\node[above] at (-5,3) {$d_1$}; 
		\node[above] at (-4,3) {$d_2$};  
		\node[above] at (6,3) {$d'_0$}; 
		\node[above] at (5,3) {$d'_1$}; 
		\node[above] at (4,3) {$d'_2$}; 
		\node[below] at (0,0) {$y$}; 
		\end{scope}
		
		\begin{scope}[xshift=28cm]
		\draw [thick] (0,0) -- (-6,3);
		\draw [thick] (.5,.25) -- (-5,3);
		\draw [thick] (.5,.75) -- (-4,3);
		\draw [thick] (0,0) -- (6,3);
		\draw [thick] (0,.5) -- (5,3);
		\draw [thick] (0,1) -- (4,3);
		\node[above] at (-6,3) {$a_0$}; 
		\node[above] at (-5,3) {$a_1$}; 
		\node[above] at (-4,3) {$a_2$};  
		\node[above] at (6,3) {$a'_0$}; 
		\node[above] at (5,3) {$a'_1$}; 
		\node[above] at (4,3) {$a'_2$}; 
		\draw [thick] (1, 1.5) -- (-2,3);
		\draw [thick]   (-.5,2.25) -- (1,3);
		\node[above] at (-2,3) {$x$};
		\node[above] at (1,3) {$y$};
		\node[below] at (0,0) {$r$};
		\end{scope}
		\end{tikzpicture}
		\caption{From left to right: $X^*, Y^*, Z^*$} \label{fig:M2}
	\end{figure}

\end{example}

\subsection{Ordered posets - a comeager conjugacy class}
In this part, we will show that the automorphism group of the universal  ordered poset
has a comeager conjugacy class.

A {\em poset} is a shortcut for a partially ordered set.
By an {\em ordered  poset},   we mean a structure of the form $(P,\prec, <)$, where $(P,\prec)$ is a finite poset, and $<$ is a linear ordering of $P$ extending $\prec$. We denote the class of all finite ordered posets by $\p$. Then the  {\em universal ordered poset} is the \fra limit of $\p$. 
Kuske-Truss \cite{KT} showed that the class of partial automorphisms of finite posets has CAP, and hence  the corresponding automorphism group has a comeager conjugacy class. The same turns out to be true for the class $\p_1$.

We will see that the proof of Kuske-Truss generalizes to our context.
\begin{theorem}\label{wapposet}
The class $\p_1$ has CAP. 
\end{theorem}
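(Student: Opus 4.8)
The goal is to show that $\p_1$, the class of partial automorphisms of finite ordered posets, has CAP. Following the stated strategy that ``the proof of Kuske-Truss generalizes'', the plan is to identify a cofinal subclass of $\p_1$ on which amalgamation can be performed explicitly, and to carry out that amalgamation by freely adjoining the two partial-order relations coming from the two extensions while respecting the common linear order $<$ on the overlap.

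First I would isolate a good normal form for elements of $\p_1$, analogous to the ``normal form'' used for $\SB_1$: given $(A,p)\in\p_1$ with $p$ a partial automorphism of an ordered poset $A=(A,\prec,<)$, I would extend $p$ to some $(A',p')$ in which the orbit structure is transparent, e.g. every non-trivial orbit is sufficiently long and the way distinct orbits relate in the order $\prec$ is already ``decided'' and cannot be forced together by passing to a larger structure. The key point is that these normal-form elements should be cofinal in $\p_1$, so that verifying AP on them suffices to establish CAP for all of $\p_1$. I would then take two extensions $(B,q),(C,r)$ of such a normal-form $(A,p)$, and without loss of generality assume they too are in normal form and that $B\cap C=A$.

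The heart of the argument is the amalgamation itself. On the underlying set $D=B\cup C$ I would define $s=q\cup r$; since $q$ and $r$ agree with $p$ on $A$, this is a well-defined bijection of its domain, and I must define an ordered-poset structure on $D$ making $s$ a partial automorphism. The linear order $<_D$ I would take to be any linear extension that restricts to $<_B$ on $B$ and $<_C$ on $C$ (possible since they agree on $A$); the crucial work is the partial order $\prec_D$. Here I would use the free/transitive-closure amalgamation: declare $b\prec_D c$ (for $b\in B\setminus A$, $c\in C\setminus A$) exactly when forced by transitivity through $A$, i.e. when there is $a\in A$ with $b\prec_B a\prec_C c$ (and symmetrically), and take the transitive closure. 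The two facts to check are that this $\prec_D$ is a genuine partial order (no new cycles are created — this is where the Kuske-Truss free amalgamation of posets is invoked) and that it is compatible with $<_D$, so $(D,\prec_D,<_D)\in\p$. Finally I must verify that $s$ preserves $\prec_D$ and $<_D$: on pairs both inside $B$ or both inside $C$ this is immediate, and on mixed pairs it follows because the ``forcing through $A$'' definition is itself $p$-invariant — this is exactly where the normal form, which guarantees that $q$ and $r$ move elements coherently relative to $A$, is needed.

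The main obstacle I expect is precisely this last compatibility check between the dynamics of $s$ and the freely-amalgamated order: one must ensure that applying $s$ (which may push an element out of $A$ into the ``new'' part of $B$ or $C$) does not create a $\prec_D$-relation in the image that was absent in the domain, nor destroy one. Controlling this requires the normal form to pin down, for each orbit meeting $A$, how its elements sit in $\prec$ relative to the other orbits, so that the ``relation forced through $A$'' is stable under $s$. This is the step where the ordered setting adds genuine content beyond Kuske-Truss, since one simultaneously tracks the partial order $\prec$, the linear order $<$, and the orbit combinatorics of $s$; I would expect the bulk of the proof to consist of a careful case analysis verifying $\prec_D$-preservation on mixed pairs, organized by which of $B\setminus A$, $A$, $C\setminus A$ the two points and their $s$-images lie in.
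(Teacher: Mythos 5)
Your skeleton (pass to a cofinal class of ``determined'' configurations, amalgamate the posets by transitive closure, interleave the linear orders) points in the same direction as the paper's proof, but there is a genuine gap at the central step: you amalgamate over $A$ itself, taking $B\cap C=A$ and identifying only $A$ inside $D=B\cup C$. The paper instead identifies the full orbit-saturations $A_B=\{q^n(a)\colon a\in A,\ n\in\ZZ\}\subseteq B$ and $A_C=\{r^n(a)\colon a\in A,\ n\in\ZZ\}\subseteq C$, pointwise via $q^n(a)\mapsto r^n(a)$, and this is not cosmetic: with only $A$ identified, $s=q\cup r$ need not preserve the transitive closure $\prec^D$. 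Indeed, suppose $b\prec^D c$ is forced through $a\in A$, i.e.\ $b\prec^B a\prec^C c$, and $s(b)=q(b)$, $s(c)=r(c)$ are defined. The natural chain for the images is $q(b)\prec^B q(a)$ together with $r(a)\prec^C r(c)$; but if the orbit of $a$ leaves $A$ --- which is unavoidable in an order class, where non-trivial orbits are never cyclic and can always be extended --- then $q(a)\in B\setminus A$ and $r(a)\in C\setminus A$ are \emph{distinct} points of your $D$, and the chain breaks. Concretely: let $A=\{a_0,a_1\}$ with $a_0\prec a_1$ and $p(a_0)=a_1$; in $B$ add $a_2^B=q(a_1)$ and a new orbit $b\mapsto b'$ with $b\prec^B a_1$ and $b'\prec^B a_2^B$ (no other relations to $A$); in $C$ add $a_2^C=r(a_1)$ and a new orbit $c\mapsto c'$ with $a_1\prec^C c$ and $a_2^C\prec^C c'$. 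Then $b\prec^D c$ through $a_1$, but in your amalgam the only element $\prec^D$-above $b'$ is $a_2^B$, which is unrelated to $c'$; so $s(b)=b'$ and $s(c)=c'$ are incomparable and $s$ is not a partial automorphism. No normal form imposed on $(A,p)$ can repair this, because the offending orbits $\{b,b'\}$ and $\{c,c'\}$ are new in $B$ and $C$.

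This is precisely what the paper's Steps 1--2 (every orbit, and then every \emph{pair} of orbits, is \emph{determined}) are for: determinedness guarantees that the saturations $A_B$ and $A_C$ are isomorphic over $A$ compatibly with the dynamics, so one may legitimately identify $a_i^B$ with $a_i^C$ and amalgamate over this larger common part; then the witness for a mixed relation can always be taken among the identified points, its $q$- and $r$-images coincide in $D$, and the transitivity chain survives applying $s$. The same issue affects your linear order: an arbitrary common linear extension of $<^B$ and $<^C$ will generally not be preserved by $s$ on mixed pairs; the paper interleaves the blocks cut out by the identified saturation points, $B_0^D<C_0^D<a_1<B_1^D<C_1^D<a_2<\cdots$, and the stability of this pattern under $s$ again rests on determinedness (plus the requirement that non-intertwining orbits cannot be made to intertwine in extensions). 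So the essential missing ingredients in your proposal are the extension lemmas producing determined pairs of orbits and, above all, the enlargement of the amalgamation base from $A$ to its orbit-saturation.
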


Below, we will always use the symbol $\prec$ for the poset relation, and $<$ for the linear order. For $A=(A,p)\in\p_1$, and an orbit $O=\{a_0, a_1, \ldots, a_{n}\}$ of $p$, we say that $O$ is \emph{$<$-increasing} if $a_0<\ldots< a_n$, otherwise, it is \emph{$<$-decreasing}.
As with boron trees, if $(B,q)\in\p_1$ extends $(A,p)$,  the orbit of $q$ extending $O$ will be denoted by $O_B$.

We say that a pair of orbits $(O,N)$ is {\em determined} if for any extensions $(B,q),(C,r)$ of $(A,p)$, such that for every $k\in\mathbb{Z}$ and $x \in O \cup N$ we have that $q^k(x)$ is defined iff $r^k(x)$ is defined, the following holds: $(O_B\cup N_B, q \restriction(O_B\cup N_B))$ and
$(O_C\cup N_C,r \restriction(O_C\cup N_C))$ are isomorphic via a mapping which is the identity on $O\cup N$.
Clearly, $(O,N)$ is determined iff $(N,O)$ is determined.
An orbit $O$ is {\em determined} if the pair $(O,O)$ is determined.

Let $(A,p)$ be given.
Let $O$ be an orbit in $(A,p)$ and $x\in O$. Denote
\[ t(O,x)=\{n\in\ZZ\colon p^n(x)\in O \text{ and } x\prec p^{n}(x)\}.\]
Note that if $O$ is $<$-increasing then $t(O,x)$ consists of non-negative integers and if $O$ is $<$-decreasing
then $t(O,x)$ consists of non-positive integers.
Moreover, if $n\in t(O,x)$ then by the transitivity of $\prec$, for every positive integer $k$, we have that
if $p^{kn}(x)\in O$ then $kn\in t(O,x)$.
More generally, if $(O,N)$ are orbits in $(A,p)$ and $x\in O$, $y\in N$, then let
\[t(O,N,x,y)=\{n\in\ZZ\colon p^n(y)\in N \text{ and } x\prec p^{n}(y)\}.\]
We say that $(O,N,x,y)$ is {\em positive determined} if 
orbits $O$ and $N$ are determined and
for any extensions $(B,q),(C,r)$ of $(A,p)$, such that for every $k\in\mathbb{Z}$  we have that $q^k(x)$ is defined iff $r^k(x)$ is defined and $q^k(y)$ is defined iff $r^k(y)$ is defined, we have that $t(O_B,N_B,x,y)\cap\NN=t(O_C,N_C,x,y)\cap\NN$. We similarly define when 
$(O,N,x,y)$ is {\em negative determined}. We let $(O,N,x,y)$ to be {\em determined}
 iff  
$(O,N,x,y)$ is both positive and negative determined.
The  $(O,x)$ is {\em determined} if for any extensions $(B,q),(C,r)$ of $(A,p)$, such that for every $k\in\mathbb{Z}$  we have that $q^k(x)$ is defined iff $r^k(x)$ is defined, it holds  $t(O_B,x)=t(O_C,x)$. 
Note that a pair of orbits $(O,N)$ is determined iff for every $x\in O$, $y\in N$, the $(O,N,x,y)$ and $(N,O,y,x)$
 are determined iff
for some $x\in O$, $y\in N$, the  $(O,N,x,y)$ and $(N,O,y,x)$ are
 determined. Similarly, $O$ is determined iff for some/every
$x\in O$, the  $(O,x)$ is determined.

An orbit $O$ will be called an {\em antichain} if for every extension $(B,q)$ of $(A,p)$ and for every/some $x\in O$, we have $t(O_B,x)=\emptyset$. 
An $X\subset \ZZ$ is called {\em positive eventually periodic} if there exist $N,k\geq 0$ such that
$X\cap[N,\infty)=\{N+kn\colon n\geq 0\}$. 
The number $k$ we will call the {\em positive period} of $X$.
We similarly define a {\em negative eventually periodic} set and the {\em negative period}.
We call a set {\em eventually periodic} if it is both positive and negative periodic.
We will call two orbits $O=\{a_0,\ldots,a_m\}$ and $N=\{b_0,\ldots, b_n\}$ of $p$ 
{\em intertwining} if the $<$-intervals $(\min\{a_0,a_m\}, \max\{a_0,a_m\})_<$ and 
 $(\min\{b_0,b_n\}, \max\{b_0,b_n\})_<$ intersect.

\begin{proof}[Proof of Theorem \ref{wapposet}]
The proof is  similar to the proof of Kuske-Truss \cite{KT}. 

{\bf{Step 1.}} Every $(A,p) \in \p_1$ can be extended to some $(B,q)\in\p_1$, in which every orbit is determined.
Moreover, we can do it so that we do not add new orbits.

Take an orbit $O$ and $x\in O$, and without loss of generality suppose that $O$ is $<$-increasing. If $O$ is an antichain
then it is already determined. Otherwise, let (perhaps after passing to an extension) $k\in t(O,x)$. Now  for every positive integer $n$, we have that
if $p^{kn}(x)\in O$ then $kn\in t(O,x)$. Using this remark, we obtain  $(B,q)$, an extension  of $(A,p)$, such that for every  $i=1,\ldots,k-1$,
if there is an extension $(B_1,q_1)$ of $(B,q)$ with $n_ik+i\in t(O_{B_1},x)$, for some $n_i\geq 0$, then, in fact,
for every extension $(B_2,q_2)$ of $(B,q)$, it holds $n_ik+i\in t(O_{B_2},x)$, as long as $n_ik+i\in O_{B_2}$.
To obtain such a $(B,q)$, we construct a sequence of extensions $(A_1,p_1),\ldots, (A_{k-1}, p_{k-1})$ of $(A,p)$ 
such that $(A_i,p_i)$ has the required property for $i$, and we take $(B,q)=(A_{k-1}, p_{k-1})$.
Then clearly $O_B$ is determined.

{\bf{Step 2.}} Every $(B,q) \in \p_1$ can be extended to some $(D,s)\in\p_1$, in which every pair of orbits is determined.
For this we find a $(D,s)$ such that for every pair of orbits $(O,N)$, $x\in O$, $y\in N$, there is an almost periodic
set $X\subseteq\ZZ$ such that  for every extension $(D_1,s_1)$ of $(D,s)$, if $m_1,m_2$ are the least such that 
${s_1}^{m_1}(y)$ and ${s_1}^{-m_2}(y)$ are undefined,  we have $X\cap(-m_1,m_2)=t(O_{D},x)$.

{\bf{Step 2a.}} The $(B,q) $ obtained in Step 1 can be extended to some $(C,r)\in\p_1$ such that all pairs of orbits in which both orbits are antichains, are determined. Moreover, we can do it in a way that for each such a pair we add  four new orbits,
none of which is an antichain.

We fix a pair $(O,N)$ of such orbits and let $x\in O$.
As in Kuske-Truss, after possibly extending $N$, find $y\in N$ and $0<n$ with $q^n(y)\in\rng(q)\setminus\dom(q)$
such that for every 
$z\in\zdef(q)\setminus N$, it holds: $z\prec y$ iff $z\prec q^n(y)$, $y\prec z$ iff $q^n(y)\prec z$,
$z$ is incomparable with $y$ iff $z$ is incomparable with $q^n(y)$.
For this pick some $y_0\in N$ and, possibly extending $q$, using the pigeonhole principle,  choose $k_1<k_2$ sufficiently large so that $q^{k_2}(y_0)\in\rng(q)\setminus\dom(q)$,
 $z\prec q^{k_1}(y_0)$ iff $z\prec q^{k_2}(y_0)$, $q^{k_1}(y_0)\prec z$ iff $q^{k_2}(y_0)\prec z$,
$z$ is incomparable with $q^{k_1}(y_0)$ iff $z$ is incomparable with $q^{k_2}(y_0)$.
Take $y=q^{k_1}(y_0)$ and $n=k_2-k_1$.

 Let us proceed to the construction. Take $\{a_i\colon  0\leq i\leq n\}$  and $\{b_i\colon  0\leq i\leq n\}$
 disjoint from each other and from
$\zdef(q)$ and such that for all $0\leq i<j\leq n$, $z\in\zdef(q)\setminus N$ :
 \begin{enumerate}
 \item $a_i$ and $a_j$ are incomparable except for $a_0\prec a_n$,
\item $b_i$ and $b_j$ are incomparable except for $b_n\prec b_0$,
\item $a_i\prec q^i(y)\prec b_i$,
\item $z\prec b_i$ iff $z\prec q^i(y)$,
\item $a_i\prec z$ iff $q^i(y)\prec z$,
\item $a_i\prec b_i$, $a_0\prec b_n$, $a_n\prec  b_0$,
\item $a_0<\ldots<a_n<\zdef(q)<b_n<\ldots<b_0$.
\end{enumerate}
Denote the obtained structure  by $(B_1,\prec, <)$. Let $q_1$ extend $q$ by $q_1(a_i)=a_{i+1}$ and
$q_1(b_{i+1})=b_i$, $i=0,\ldots, n-1$.
It is straightforward to see that $\prec$ is transitive, $<$ extends $\prec$,  $p$ preserves $\prec$ and $<$.
As in Kuske-Truss, we have that in any extension $(B_2,q_2)$ of $(B_1,q_1)$,
\[t(O_{B_2},N_{B_2},x,y)=\{kn+i\colon k\in\NN, 0\leq i<n, x\prec q^i(y)\}\cap B_2,\]  hence $(O,N,x,y)$ is positive determined.
Indeed, let  $(B_2,q_2)$ be an extension of $(B_1,q_1)$ and denote 
$X=\{kn+i\colon k\in\NN, 0\leq i<n, x\prec q^i(y)\}\cap B_2$.
Since $a_0\prec a_n$ and $a_i\prec q^i(y)$, we get $t(O_{B_2},N_{B_2},x,y)\supseteq X$. 
Since $b_n\prec b_0$ and $q^i(y)\prec b_i$, using (4), we get $t(O_{B_2},N_{B_2},x,y)\subseteq X$.

 By considering $(B_1, q_1^{-1})$ and repeating the argument above, we further extend $(B_1,q_1) $ to 
 obtain $(C,r)$ in which  $(O,N)$ is determined.

{\bf{Step 2b.}}  The $(C,r) $ obtained in Step 2a can be extended to some $(D,s)\in\p_1$ such that all pairs of orbits such that at least one of them is not an antichain are determined. 
Moreover, we can do it in a way that we do not add new orbits.

We fix a pair $(O,N)$ of such orbits and let $x\in O$ and $y\in N$.
  If for every extension $(C_1,r_1)$ of $(C,r)$, $t(O,N,x,y)=\emptyset$ and  $t(N,O,y,x)=\emptyset$,
   then  $(O,N)$ is already determined. Therefore, 
 by passing to an extension if necessary,  we assume that $t(O,N,x,y)\neq\emptyset$ or $t(N,O,y,x)\neq\emptyset$.
  Note that at least one of the sets $t(O,N,x,y)$ and $t(N,O,y,x)$ is empty. Without loss of generality,
  let us assume that $x\prec y$ and $t(N,O,y,x)=\emptyset$.

If $k\in t(O,x)$ and $l\in   t(N,y)$, then for every $n_1,n_2\geq 0$, if $q^{kn_1+ln_2}(y)\in N$, then
$kn_1+ln_2\in t(O,N,x,y)$. Hence if there is  a positive number $k\in t(O,x)$ or a positive number $l\in   t(N,y)$,
 reasoning as in Step 1, we can find an extension in which $(O,N,x,y)$ is positive determined. Similarly, if 
  there is  a negative number $k\in t(O,x)$ or a negative number $l\in   t(N,y)$,
 we can find an extension in which $(O,N,x,y)$ is negative determined. 
  Therefore, without loss of generality, what is left to be shown is the following. Suppose that $O$ and $N$ are
  $<$-increasing,  at least one of them is not an antichain, and $(O,N,x,y)$ is positive determined.
  Then we can extend $(C,r)$ to $(C_1,r_1)$ so that $(O_{C_1},N_{C_1},x,y)$ is negative determined.

Without loss of generality, $O$ is not an antichain. Take some $k$ such that $x\prec r^k(x)$. Then, clearly, for every $n>0$, it holds
$x\prec r^{nk}(x)$. Take an extension $(C_1,r_1)$ such that for every $0\leq i<k$ either for some $n_i$, we have
${r_1}^{-(n_ik+i)}(y)\in C_1$ and $x\prec {r_1}^{-(n_ik+i)}(y)$  does not hold, or, for every extension $(C_2,r_2)$ of 
$(C_1,r_1)$ 
and every $n>0$, $x\prec {r_1}^{-(nk+i)}(y)$  holds .
Then $(O_{C_1}, N_{C_1},x,y)$ is negative determined. Indeed, note that if $i$ and $n_0$ are such that 
$x\prec {r_1}^{-(n_0k+i)}(y)$   does not hold, then in every extension $(C_2,r_2)$ of $(C_1,r_1)$ and $n_1>n_0$,
$x\prec {r_1}^{-(n_1k+i)}(y)$,   does not hold either by the choice of $k$.

We apply this procedure to every pair of orbits such that at least one of them is not an antichain. 
The resulting extension, which we denote by $(D,s)$, is as required.

	\medskip

	We will show that $\p_1$ has the CAP,  i.e. we will show that for every
	 $(A_0,p_0)\in\p_1$
	there is $(A,p)\in\p_1$ extending $(A_0,p_0)$ such that for any $(B,q),(C,r)\in \p_1$ extending $(A,p)$
	there exists $(D,s)\in\p_1$, which is an amalgam of $(B,q)$ and $(C,r)$ over $(A,p)$.
	For this fix $(A_0,p_0)\in \p_1$ and extend it to an
	$(A,p)\in \p_1$ such that any pair of orbits in $A$ is determined, and there is no extension $(A_1,p_1)$ of $(A,p)$ in which some two orbits in $A$ that did not intertwine, become one orbit or they intertwine in $A_1$.
	Fix $(B,q),(C,r)\in \p_1$ extending $(A,p)$. 
	Without loss of generality, we have that $A\subseteq\dom(q)\cap\rng(q)$, and similarly,
	$A \subseteq\dom(r)\cap\rng(r)$, as well as that for every $a\in A$ and $n\in\mathbb{Z}$,
	$q^n(a)$ is defined iff $r^n(a)$ is defined.
	
	Enumerate the set $\{b\in B\colon (\exists a\in A, n\in\mathbb{Z}) \ b=r^n(a)\}$ into
	a $<$-increasing sequence 
	$a^B_1,a^B_2,\ldots, a^B_k$, and similarly,
	enumerate the set $\{c\in C\colon (\exists a\in A, n\in\mathbb{Z}) \ c=r^n(a)\}$ into
	$a^C_1,a^C_2,\ldots, a^C_k$ so that it is $<$-increasing.
	By the assumptions on $A$, for any $a,b\in A$ and $m,n\in\mathbb{Z} $,
	if $x_B=q^m(a)$, $y_B=q^n(b)$, $x_C=r^m(a)$, $y_C=q^n(b)$
	are defined, then $x_B< y_B$ iff $x_C< y_C$.
	Denote $B_i=\{b\in B\colon a_i^B< b< a^B_{i+1}\}$, $i=1,\ldots, k-1$, $B_0=\{b\in B\colon b< a^B_1\}$,
	$B_k=\{b\in B\colon a_k^B< b\}$, and similarly define $C_i$'s.
	
	We first amalgamate $B$ and $C$ over $A$ in $\p$. Let $D$ be the disjoint union
	of $B$ and $C$ with  $a_i^B$ and $a_i^C$ identified. 
	Set $a_i=a_i^B=a_i^C$, and let 
	\[B_0^D< C_0^D< a_1< B_1^D< C_1^D<a_2<\ldots< a_k< B_k^D< C_k^D.\]
	Denote by $\prec^B$ the partial ordering relation on $B$, by $\prec^C$ the partial ordering relation on $C$, and let $\prec^D$ be the transitive closure of $\prec^B \cup \prec^C$.
	Then $(D,\prec^D)$ is a partial ordering such that the linear ordering $<^D$ extends $\prec^D$, and so $D$ is an amalgam of $B$ and $C$ over~$A$.
	
	We finally let $s(x)=y$ iff  $x=b_1, y=b_2$ for some $b_1,b_2\in B$ and $q(b_1)=b_2$,
	or $x=c_1, y=c_2$ for some $c_1,c_2\in C$ and $r(c_1)=c_2$.  This is a partial automorphism.
	In particular, if for some $b \in B$,  $c \in C$, it holds $b\prec^D c$ and $s(b), s(c)$ are defined,
	then there is $a\in \{a_1,\ldots, a_k\}$ such that $b\prec^B a$ and $a \prec^C c$, or
	$b\prec^C a$ and $a \prec^B c$. 
	Without loss of generality, it holds
	$b\prec^B a$ and $a \prec^C c$. Note that 
	$q(a)$ and $r(a)$ are defined and hence we have 
	$q(b)\prec^B q(a)$ and $r(a)\prec^C r(c)$. This implies $s(b)\prec^D s(c)$.
	Hence $(D,s)$ is the required amalgam.
	
\end{proof}

\section{The two-dimensional case. Conjugacy classes.}

We  provide a condition, which we will use to obtain many examples of ordered
structures $M$ such that ${\rm Aut}(M)$ has no comeager $2$-dimensional diagonal conjugacy class.

Given a partial automorphism $p$ of a structure $A$, and  $a\in A$, say that   $a\in A$ is {\em  locked} by $p$ if
there are  $x\leq a\leq y$, $x,y\in A$ such that $p(x)=y$ or $p(y)=x$.

\begin{proposition}\label{nowap}
	Let $\f$ be an \fra order class.
	Suppose that for every $(A,p)\in\f_1$  and $a\in A$ not locked by $p$,
	there are extensions $(B,r), (C,s)\in\f_1$ of $(A,p)$ such that $r(a)< a$ and $a< s(a)$.
	Then  $\f_2$ has no WAP.
\end{proposition}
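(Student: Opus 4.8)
The plan is to \emph{disprove} WAP directly from its definition, using as a witness the one-point structure $A=(\{a\},\emptyset,\emptyset)\in\f_2$ (a single element with both partial automorphisms empty). Fix an arbitrary embedding $\phi\colon A\to A'$ into some $(A',q_1,q_2)\in\f_2$, and write $a$ also for $\phi(a)\in A'$. I must produce extensions $(B,\cdot,\cdot),(C,\cdot,\cdot)\in\f_2$ of $(A',q_1,q_2)$ that cannot be amalgamated over $A$; that is, for which there is no $(D,t_1,t_2)\in\f_2$ with embeddings $\gamma\colon B\to D$, $\delta\colon C\to D$ satisfying $\gamma(a)=\delta(a)$. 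The mechanism is clearest when $q_1=\emptyset$: then $a$ is not locked by $q_1$, so the Proposition's hypothesis (applied in the first coordinate) yields extensions of $q_1$ pushing $a$ down and up; keeping $q_2$ fixed gives $B=(B,r,q_2)$ with $r(a)<a$ and $C=(C,s,q_2)$ with $a<s(a)$. Here the second partial automorphism plays no role; it is needed only to defeat an adversary who \emph{locks} $a$ with $q_1$.

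First I would reduce the general case to this one. Using that the set of points locked by $q_1$ is a finite union of closed $<$-intervals, hence bounded, I would pass to a common extension $(A'',p_1,p_2)\in\f_2$ of $(A',q_1,q_2)$ containing a point $e=p_2^{\,m}(a)$ (for some $m\ge 0$) that is \emph{not} locked by $p_1$. Granting such an $e$, the Proposition's hypothesis applied to $(A'',p_1)\in\f_1$ and the unlocked point $e$ produces extensions $r\supseteq p_1$ with $r(e)<e$ and $s\supseteq p_1$ with $e<s(e)$; promoting to $\f_2$ by retaining $p_2$ gives $B=(B,r,p_2)$ and $C=(C,s,p_2)$, both extending $(A'',p_1,p_2)\supseteq(A',q_1,q_2)\supseteq A$. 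Note that $e\notin\dom(p_1)$ automatically, since any point of $\dom(p_1)$ is straddled by its own jump and hence locked.

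The contradiction then closes cleanly. Suppose $(D,t_1,t_2)$ amalgamated $B$ and $C$ over $A$ via $\gamma,\delta$ with $\gamma(a)=\delta(a)=:a^*$. Since $e=p_2^{\,m}(a)$ holds in both $B$ and $C$ and both embeddings commute with the second partial automorphism, $\gamma(e)=t_2^{\,m}(a^*)=\delta(e)=:e^*$. Commuting with the first partial automorphism and preserving $<$ then forces, from $B$, $t_1(e^*)=\gamma(r(e))<\gamma(e)=e^*$, and from $C$, $t_1(e^*)=\delta(s(e))>\delta(e)=e^*$, which is absurd. Hence no amalgam exists, and since $(A',q_1,q_2)$ and $\phi$ were arbitrary, $\f_2$ has no WAP. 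The main obstacle is the reduction step: guaranteeing a $p_2$-reachable point $e$ lying \emph{outside} the bounded region locked by $p_1$. This is exactly the place where the flexibility hypothesis (the only genericity assumed) must be combined with amalgamation in $\f$ — one extends the $p_2$-orbit of $a$ far enough to leave the finitely many locking intervals of $p_1$ — and it is the only step requiring more than bookkeeping about embeddings commuting with the $p_i$.
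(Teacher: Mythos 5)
Your final two steps are exactly the paper's mechanism and are fine: once you have a point $e$, reachable from the base point $a$ by a word in the two partial automorphisms, that is unlocked by one of them, the hypothesis gives two one-sided extensions of that map, and the commutation of embeddings with word evaluation yields the contradiction. The genuine gap is the reduction step, and it cannot be repaired along the route you propose, because in the WAP game the structure $A'$ (and the embedding $\phi$) is chosen adversarially. Nothing prevents the adversary from taking $A'=(\{z_1<a<z_2\},q_1,q_2)$ with $q_1(z_1)=z_2$ and $q_2(a)=a$ (such a structure exists in the classes the proposition is applied to, e.g.\ finite ordered graphs or ordered posets). Fixed points and locking both persist under extension: in \emph{every} extension $(A'',p_1,p_2)$ one has $p_2^m(a)=a$ for all $m$, and $a$ stays locked by $p_1\supseteq q_1$. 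So there is no point $e=p_2^m(a)$ of the kind you need, and ``extending the $p_2$-orbit far enough'' is impossible --- powers of the second map alone can never be guaranteed to escape the locked region of the first. Worse, the adversary can fix $a$ with \emph{both} maps, $q_1(a)=q_2(a)=a$; then every word $w(p_1,p_2)$ evaluated at $a$ returns $a$ in every extension, so no argument of your shape can produce non-amalgamable extensions. Indeed, for finite ordered graphs one checks directly that any two extensions of this $A'$ \emph{do} amalgamate over $\{a\}$: amalgamate the linear orders side-by-side around the common fixed point, take the unions of the partial maps, and add no cross edges. Hence your chosen witness, the one-point structure with both maps empty, is genuinely not a witness of the failure of WAP; the flaw is in the very first move, not just in bookkeeping.

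The paper avoids both problems. Its witness is any $(A,p,q)\in\f_2$ containing a point $x$ with $x<p(x)$ (such exists by the hypothesis applied to an unlocked point); this feature persists in every extension and cannot be neutralized, since $x$ is then never a fixed point of any extension of $p$. Given the adversary's $(A',p',q')$, the paper builds by induction a word $w\in F_2$ in \emph{both} generators and an extension $(A'',p'',q'')$ such that $y=w(p'',q'')(x)$ is unlocked by at least one of $p''$, $q''$. The escape move is purely order-theoretic: if the current point $y_n$ is locked via a straddle $z_1\leq y_n\leq z_2$ with $f(z_1)=z_2$, $f\in\{(p'')^{\pm1},(q'')^{\pm1}\}$, then extending $f$ so that $f(y_n)$ is defined (always possible, by ultrahomogeneity of the \fra limit) forces $f(y_n)\geq z_2$ by order-preservation, pushing the point past that straddle; since $p'$ and $q'$ are finite, the strictly increasing sequence of values leaves the locked region of one of the two maps after finitely many steps. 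This alternating-word construction is precisely the content missing from your argument, and it is the heart of the paper's proof.
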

\begin{proof}
	It suffices to show that for a given 
	$(A',p',q')\in\f_2$ and $x\in A$ such that $x< p'(x)$ 
	there exists $(A'', p'',q'')\in\f_2$ which is an extension of $(A',p',q')$, and a word $w(s,t)\in F_2$, such that $w(p'',q'')(x)$ is defined and not locked by $p''$ or $q''$. 
	Indeed, let $(A, p, q)\in\f_2$ and $x\in A$ be such that $x< p(x)$ (wlog there is such an $x$ in $A$), let
	$(A', p',q')\in\f_2$ be an arbitrary extension of $(A,p,q)$, and let $(A'', p'',q'')\in\f_2$ and $w(s,t)$ be as above.
	Then $y=w(p'',q'')(x)$ is not locked (wlog) by $p''$.
	Apply the assumptions of the proposition to $(A'',p'')$ and $y$ to find the corresponding $(B,r), (C,s)\in\f_1$.
	Then $(B, p'',r), (C, p'', s)$ cannot be amalgamated over~$(A,p,q)$.

	We construct the required $(A'',p'',q'')$ and $w$ inductively.
	Let $(A''_0,p''_0,q''_0)=(A',p',q')$, and $w_0=1$. Suppose that we already constructed $(A''_n,p''_n,q''_n)$ and $w_n(s,t)$, and 
	suppose  that $w_{n-1}(p''_{n-1},q''_{n-1})(x)=w_{n-1}(p''_{n},q''_{n})(x)< w_n(p''_{n},q''_{n})(x)$.
	Denote $y=w_n(p''_{n},q''_{n})(x)$ and, if $y$ is  locked by $p''_{n}$ or $q''_{n}$, proceed as follows.
	Let $z_1\leq y\leq z_2$, and $f\in \{p''_{n} , (p''_{n})^{-1}, q''_{n} , (q''_{n})^{-1} \}$ be such that
	$f(z_1)=z_2$. Take
	$(A''_{n+1},p''_{n+1}, q''_{n+1})$ to be an extension of $(A''_n,p''_n,q''_n)$ such that $|A''_{n+1}\setminus A''_n|\leq 1$ and $f(y)$ is defined; clearly, $z_2<f(y)$. Let $u=s$ if $f=p''_n$, let  
	$u=s^{-1}$ if $f=(p''_{n})^{-1}$, let  $u=t$ if $f=q''_n$ , and let  $u=q^{-1}$ if $f=(q''_{n})^{-1}$, and set  
	$w_{n+1}(s,t)=uw_n(s,t)$.
	Clearly, $w_{n}(p''_n,q''_n)(x)=w_{n}(p''_{n+1},q''_{n+1})(x)< w_{n+1}(p''_{n+1},q''_{n+1})(x)$.
	Since $p'$ and $q'$ are finite and the sequence $(w_{n}(p''_n,q''_n)(x))_n$ is increasing,
	after finitely many steps we will obtain $N$ such that $w_N(p''_N,q''_N)(x)$ is not locked by 
	$p''_N$ or is not locked by $q''_N$. Set $(A'',p'',q'')=(A''_N,p''_N,q''_N)$, $w=w_N$,  and $y=w_N(p'',q'')(x)$.
\end{proof}

\begin{corollary}\label{indep}
	Suppose that $\KK$ is a \fra class, and let $\f$ be a full order expansion of $\KK$. Then $\f_2$ has no WAP.
\end{corollary}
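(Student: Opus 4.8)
The plan is to apply Proposition \ref{nowap} directly: it suffices to verify its hypothesis for $\f$. So fix $(A,p)\in\f_1$ and a point $a\in A$ that is \emph{not locked} by $p$, and produce extensions $(B,r),(C,s)\in\f_1$ of $(A,p)$ with $r(a)<a$ and $a<s(a)$.

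First I would unpack what being not locked gives. Taking $x=a$ or $y=a$ in the definition shows immediately that $a\notin\zdef(p)$ (if $p(a)$ or $p^{-1}(a)$ were defined, it would form a locking witness together with $a$, regardless of its position relative to $a$). More usefully, I would show that $p$ respects the side of $a$: for every $d\in\dom(p)$, if $d<a$ then $p(d)<a$, and if $d>a$ then $p(d)>a$. Indeed, $d<a<p(d)$ would give the witness $x=d\le a\le y=p(d)$ with $p(x)=y$, and $p(d)<a<d$ would give the witness $x=p(d)\le a\le y=d$ with $p(y)=x$; both contradict that $a$ is not locked. In particular $\max\{p(d):d\in\dom(p),\ d<a\}<a<\min\{p(d):d\in\dom(p),\ d>a\}$.

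Next I would build $r$ by adjoining a single new point $a'$ as the value $r(a)$. For $r:=p\cup\{(a,a')\}$ to be a partial automorphism of the $\KK$-reduct, $a'$ must realize over $\rng(p)$ the $p$-image of $\qftp_A(a/\dom(p))$; this is the only constraint, since $a'\notin\dom(r)$, so the $\KK$-relations of $a'$ with $a$ and with $A\setminus\rng(p)$ are unconstrained. Using the amalgamation property of $\KK$ — amalgamating $A$ with a fresh copy $\rng(p)\cup\{a'\}$ of $\dom(p)\cup\{a\}$ over $\rng(p)$ — I obtain $B^-\in\KK$ carrying such an $a'$. Because $\f$ is a full order expansion, I am free to extend $<$ from $A$ to $B^-$ by any linear order making $r$ order preserving; order-preservation forces only that $a'$ lie in the interval $\bigl(\max\{p(d):d<a\},\ \min\{p(d):d>a\}\bigr)$, which by the previous paragraph is an interval straddling $a$. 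Hence I may insert $a'$ into the fresh order slot immediately below $a$, obtaining $(B,r)$ with $r(a)=a'<a$; inserting $a'$ immediately above $a$ gives $(C,s)$ with $a<s(a)=a'$.

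With these two extensions in hand the hypothesis of Proposition \ref{nowap} is satisfied, so $\f_2$ has no WAP. The only delicate point is the realization step: one must be sure the new point $a'$ can be taken genuinely distinct from $A$ and then positioned freely in the order. This is exactly where the full order expansion is used — placing $a'$ in a previously unoccupied order slot makes it automatically distinct from every element of $A$ — while the side-preservation extracted from being not locked is what guarantees that the two desired slots (just below and just above $a$) are compatible with order-preservation of $r$ and $s$. Everything else reduces to the routine bookkeeping that $r$ and $s$ respect both the $\KK$-relations and $<$.
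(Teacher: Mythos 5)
Your overall strategy is exactly the intended one: the paper gives no explicit proof of Corollary \ref{indep} precisely because it is meant to follow by verifying the hypothesis of Proposition \ref{nowap}, via the same one-new-point construction the paper carries out explicitly for posets in Corollary \ref{poset}. Your extraction of side-preservation from non-lockedness (for $d\in\dom(p)$, $d<a$ forces $p(d)<a$, and symmetrically), and the observation that order-preservation then only forces $r(a)$ into an interval straddling $a$, are correct and are the heart of the matter.

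There is, however, a genuine gap at the realization step. The amalgamation property of $\KK$ only gives you \emph{some} amalgam of $A$ with the fresh copy $\rng(p)\cup\{a'\}$ over $\rng(p)$; nothing prevents that amalgam from identifying $a'$ with an element of $A$. This is not a vacuous worry: by your own side-preservation argument, $a$ itself occupies exactly the order position over $\rng(p)$ that $a'$ is required to occupy, so if in addition $\qftp_A(a/\rng(p))$ equals the $p$-image of $\qftp_A(a/\dom(p))$, then the structure $A$ itself, with $a'\mapsto a$, is a legitimate amalgam; bare AP does not let you choose a different one, and this degenerate choice yields $r(a)=a$ rather than $r(a)<a$. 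Your attempted patch --- ``placing $a'$ in a previously unoccupied order slot makes it automatically distinct from every element of $A$'' --- is circular: whether $a'$ is a new element is decided by the $\KK$-amalgamation \emph{before} any order is chosen; if it has been identified with some $c\in A$, there is no slot left to place it in. What you actually need is strong amalgamation for $\KK$, and it is available but requires an argument: for Proposition \ref{nowap} (and the corollary) to apply at all, $\f=\KK*\mathcal{LO}$ must itself be a \fra class, and AP of $\f$ forces SAP of $\KK$ --- given $B^-,C^-\supseteq A^-$ in $\KK$ with $A^-\neq\emptyset$, expand by linear orders putting $B^-\setminus A^-$ entirely below $A^-$ and $C^-\setminus A^-$ entirely above $A^-$, amalgamate in $\f$, and note that these orders make any identification of new points impossible (the case $\rng(p)=\emptyset$ reduces to the nonempty case by first extending $p$ by the identity on some $b\neq a$, which cannot lock $a$). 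With SAP of $\KK$ in hand --- consistent with the paper's standing practice of working with full order expansions with SAP --- your construction goes through verbatim.
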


\begin{corollary}\label{poset}
	The class $\mathcal{P}_2$ has no WAP 
\end{corollary}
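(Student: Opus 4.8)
The plan is to deduce this directly from Proposition \ref{nowap} applied to $\f=\mathcal{P}$, the \fra order class of finite ordered posets $(P,\prec,<)$ in which $<$ is a linear extension of $\prec$. Note that $\mathcal{P}$ is \emph{not} a full order expansion, since $<$ is constrained to extend $\prec$, so Corollary \ref{indep} does not apply and the hypothesis of Proposition \ref{nowap} must be checked by hand. Thus it suffices to show: for every $(A,p)\in\mathcal{P}_1$ and every $a\in A$ not locked by $p$, there are extensions $(B,r),(C,s)\in\mathcal{P}_1$ of $(A,p)$ with $r(a)<a$ and $a<s(a)$.

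First I would unwind what ``not locked'' buys us. If $a$ is locked, then some jump of $p$ weakly straddles $a$ in the linear order, so $a$ not locked means that for every $v\in\dom(p)$ the endpoints $v$ and $p(v)$ lie strictly on the same side of $a$ with respect to $<$ (in particular $a\notin\zdef(p)$, so the image $s(a)$ may be defined freely). This ``same-side'' property is the crucial fact.

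Next I would build $(C,s)$ with $a<s(a)$; the construction of $(B,r)$ is symmetric, obtained by reflecting $<$. Let $C=A\cup\{a'\}$ with $a'$ a fresh point and $s=p\cup\{(a,a')\}$. For the linear order, make $a'$ the immediate successor of $a$. For the poset relation, let $\prec^C$ be the transitive closure of $\prec^A$ together with the forced relations $a'\prec p(v)$ (for $a\prec^A v$) and $p(v)\prec a'$ (for $v\prec^A a$), $v\in\dom(p)$; concretely this amounts to $a'\prec^C c\iff(\exists v)(a\prec^A v\wedge p(v)\preceq^A c)$ and dually for $c\prec^C a'$.

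Then I would verify three things: (i) $\prec^C$ is a partial order; (ii) $<^C$ extends $\prec^C$; (iii) $s$ is a partial automorphism of $C$. Transitivity and antisymmetry in (i) follow by chasing the defining formula and using that $p$ preserves $\prec^A$; for instance $a'\prec^C c\prec^C a'$ would produce, via $a\prec v_2$, $p(v_2)\preceq c\preceq p(v_1)$, $v_1\prec a$, the contradiction $a\prec a$. The main obstacle, and the only place the hypothesis is genuinely used, is the interaction of the forced $\prec$-comparabilities with sliding $a$ upward, i.e.\ steps (ii) and the $<$-preservation half of (iii): from $a'\prec^C c$ one extracts $v$ with $a\prec^A v$ and $p(v)\preceq^A c$, and the same-side property turns $a<^A v$ into $a<^A p(v)\le^A c$, hence $a<^A c$; since $a'$ is the immediate successor of $a$ this gives $a'<^C c$, as required, and the same computation shows $s$ preserves $<$ (e.g.\ $a<^C v\iff a<^A v\iff a<^A p(v)\iff a'<^C p(v)$). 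Once these checks are in place, $(A,p)$, $a$, $(B,r)$, $(C,s)$ witness the hypothesis of Proposition \ref{nowap}, and the conclusion that $\mathcal{P}_2$ has no WAP follows at once.
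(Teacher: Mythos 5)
Your proof is correct and follows essentially the same route as the paper: both verify the hypothesis of Proposition \ref{nowap} by a one-point extension in which the new poset relations involving the added point are defined by pulling back through $p$ (your formula $a'\prec^C c\iff(\exists v)(a\prec^A v\wedge p(v)\preceq^A c)$ is, after the substitution $c=p(v)$, the same as the paper's definition of $\prec^Q$). The only differences are cosmetic: you place the new point immediately adjacent to $a$ and carry out the order/transitivity checks explicitly, whereas the paper allows an arbitrary consistent placement and leaves those verifications to the reader.
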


\begin{proof}
	Let $P=(P,\prec^P, <^P)$ be an ordered poset, and let $p$ be a partial automorphism of $P$.
	Let $(Q,<^Q)$ be an  extension of $(P,<^P)$ such that $|Q\setminus P|=1$.
	Let $x\in P$, $y\in Q\setminus P$, and suppose that $q=p\cup\{(x,y)\}$ is a partial automorphism of
	$(Q,<^Q)$. Then there is $\prec^Q$ extending $\prec^P$ such that $(Q, \prec^Q, <^Q)$ is an ordered poset, and $q$
	is a partial automorphism of $(Q,  \prec^Q, <^Q)$. Indeed, define $\prec^Q$ as follows: for $a<^Q y$, $a\in P$, we set $a\prec^Q y$ iff there is
	$a\leq^Q b<^Q y$,  $b\in \rng(p)$,
	such that (1) $a \prec^P b$ if $a\neq b$ and (2) $\alpha^{-1}(b) \prec^P x$. 
	
	Similarly, for $y<^Qd$, $d\in P$, we set $y\prec^Q d$ iff 
	there is $y<^Q c\leq^Q d$, $c\in\rng(p)$, such that (1) $x\prec^P \alpha^{-1}(c)$ and  (2) $c\prec^P d$ if $c\neq d$.
	
	Thus, assumptions of Proposition \ref{nowap} are satisfied.
\end{proof}

It is straightforward to verify that $\p_2$ has JEP. Therefore Corollary \ref{poset} implies that
the automorphism group of the universal  ordered poset has all   $2$-dimensional diagonal conjugacy classes meager.

We show that there is no comeager $2$-dimensional diagonal conjugacy class in the automorphism group of the universal ordered boron tree.
In fact, since $\SB_2$ has JEP, which is not hard to check, this will imply that all 
 $2$-dimensional diagonal conjugacy classes of the group are meager.

\begin{theorem}\label{nowapbor}
	The class $\SB_2$ has no WAP.
\end{theorem}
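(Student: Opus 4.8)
The plan is to deduce Theorem \ref{nowapbor} from Proposition \ref{nowap}, applied to the \fra order class $\f=\SB$ (with $\leq_{lex}$ as the distinguished linear order). By that proposition it suffices to establish the following flexibility property for boron trees: for every $(A,p)\in\SB_1$ and every leaf $a\in A$ that is not locked by $p$, there exist extensions $(B,r),(C,s)\in\SB_1$ of $(A,p)$ with $r(a)<_{lex}a$ and $a<_{lex}s(a)$. Everything else in the argument is formal and identical to the proof of Proposition \ref{nowap}: one starts from some $x$ with $x<_{lex}p(x)$, applies a word $w\in F_2$ in the two partial automorphisms to push the image strictly up in $\leq_{lex}$ until it reaches an unlocked position $y$, and then observes that any amalgam over the base would be forced to identify the two images of $y$ while simultaneously sending it below and above itself. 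In particular the termination of the pushing-up step follows, as there, from the finiteness of the original orbits, whose lex-hulls the value must eventually escape.

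To verify the flexibility property I would argue directly with the tree structure, using the left/right cone-attachment operations $A(x,y,\mathcal{E},\mathcal{F})$ from the proof of Theorem \ref{boron}. Since $a$ is unlocked it lies in no non-trivial orbit, and hence $a\notin\zdef(p)$, so $r(a)$ may be chosen freely subject only to the requirement that $r=p\cup\{(a,a')\}$ be a partial automorphism, i.e.\ that the fresh leaf $a'$ realise over $\rng(p)$ the quantifier-free type that $a$ realises over $\dom(p)$. I would realise $a'$ by grafting a new triangle carrying $a'$ onto a segment of $T_A$ near the node at which $a$ branches off, choosing a left attachment to obtain $a'<_{lex}a$ and a right attachment to obtain $a'>_{lex}a$. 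The unlocked hypothesis guarantees that $a$ lies outside the lex-hull of every orbit, which provides the room on both sides needed to attach $a'$ while matching the prescribed meets $\meet(a',u)$, $u\in\rng(p)$, and hence the required $R$- and $S$-relations.

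The main obstacle will be exactly this last point: because in a boron tree $\leq_{lex}$ is not free but is determined by the tree together with $S$, the type of $a'$ over $\rng(p)$ pins $a'$ into a specific $\leq_{lex}$-gap of $\rng(p)$ with a prescribed branching height, and one must show that this gap can be made to straddle $a$, so that both $a'<_{lex}a$ and $a'>_{lex}a$ are attainable. I expect to handle this by first passing to a suitable finite extension of $A$ that separates the relevant cones (much as Lemma \ref{le:SepOrb} is used in the full-order-expansion case), thereby creating an explicit segment immediately to the left and immediately to the right of $a$ within the image-gap, and then applying the attachment construction there. Should the clean statement of the flexibility property prove too strong for a particular unlocked $a$, it suffices to arrange during the word-pushing phase that the terminal value $y$ lands at a leaf for which both attachments are available, which the abundance of admissible positions in the universal ordered boron tree makes possible.
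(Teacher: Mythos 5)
Your overall strategy --- push the point with a word in the two partial automorphisms until it is ``unlocked,'' then produce two one-step extensions that cannot be amalgamated over the base --- has the right shape, and it is the shape of the paper's proof. But the specific reduction you propose, namely applying Proposition \ref{nowap} verbatim to $\f=\SB$, fails at its first step: the flexibility hypothesis of that proposition is \emph{false} for ordered boron trees. The obstruction is what the paper calls being \emph{meet-locked}. Concretely, let $p$ have the single orbit $a_0\mapsto a_1\mapsto a_2$, increasing and meet-increasing, so that $a_0$ and $a_1$ branch off a common spine at nodes $t_0<t_1$, both on the lex-smaller side, and let $a$ be a leaf branching off that spine at a node $u$ with $t_0<u<t_1$, on the lex-\emph{greater} side. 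Then $a>_{lex}a_0,a_1,a_2$, so if nothing of $A$ lies lex-above $a$, the point $a$ is not locked by $p$ in the sense of Proposition \ref{nowap}. Yet in \emph{every} extension $(B,q)$ with $q(a)$ defined, preservation of $S$ and $<_{lex}$ forces $\meet(q(a),a_2)$ to lie strictly above $t_1$ on the path of $a_2$, with $q(a)$ on the lex-greater side there; such a point sits inside the spine-continuation subtree at the node $u$, hence $q(a)<_{lex}a$ is forced. So one of your two required extensions (the one with $a<_{lex}s(a)$) does not exist, and Proposition \ref{nowap} cannot be invoked. This is exactly why the paper does not cite that proposition but instead introduces the stronger notions of meet-locked and cone-locked, proves the two-sided flexibility only for points that are not \emph{cone}-locked (via a lengthy case analysis verifying that the grafting map preserves $S$), and then reworks the word-pushing phase: its termination can no longer follow from ``the values increase and the original data are finite,'' since escaping a $p$-locking region by powers of $q$ may land the point inside a new meet-locked region of $p$. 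The paper controls this by showing that the cone-locked points of $p$ form a union of at most $2m_{p}$ lex-intervals which cannot proliferate under the relevant extensions (observations $(*)$ and $(**)$ in the proof), giving termination after at most $n_{p}+n_q$ steps.

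Your own hedges do not repair this gap. The analogue of Lemma \ref{le:SepOrb} is unavailable because $\SB$ is not a full order expansion with SAP: $\leq_{lex}$ is interdefinable with $S$ and the tree structure (the paper notes $R$ is recoverable from $S$ and $<_{lex}$), so one cannot freely reposition a realization of a quantifier-free type inside a prescribed lex-interval. And the fallback of ``arranging during the word-pushing phase that the terminal value lands at a leaf for which both attachments are available'' is not a remark one can add at the end --- it is the entire content of the theorem: with the weak lex-interval notion of locked, the pushing phase of Proposition \ref{nowap} terminates at a point that may well be meet-locked, so both the target of the induction (not cone-locked rather than not locked) and its termination argument must be redone, which is precisely what the paper's proof does.
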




Let $(A,p)\in\SB_1$, let $x\in A$, and let $O=\{a_0, \ldots, a_n\}$ be an orbit of $p$, $n\geq 2$. Suppose that 
$O$ is increasing and meet-increasing, other 3 cases being similar. We have therefore
$a_0<_{lex} \ldots<_{lex} a_n$,
and if $t_i=\meet(a_i,a_{i+1})$, then $t_0< t_1< \ldots< t_{n-1}$. 
Now the following two claims easily follow from the definition of the relation $S$.
We will use them frequently.

{\bf{Claim 1.}}
If $a_n<_{lex} x$ with $x\in {\rm Cone}_O$ 
and $(B, q)\in\SB_1$ is an extension of $(A,p)$ such that $q(x)$ is defined, then
$t_{i-1}< \meet(x,a_n)< t_i$ implies $t_i< \meet(q(x),a_n)< t_{i+1}$, for $i=1,\ldots, n-2$,
$t_{n-2}<\meet(x,a_n)< t_{n-1}$ implies $t_{n-1}< \meet(q(x),a_n)$, and 
$t_{n-1}< \meet(x,a_n)$ implies $t_{n-1}< \meet(q(x),a_n)$.

{\bf{Claim 2.}}
If $x\in {\rm Cone}_O$ is such that $x<_{lex} a_0$ and $(B, q)\in\SB_1$ is an extension of $(A,p)$
such that $q^{-1}(x)$ is defined, then $\meet(q^{-1}(x), a_0))< t_0$ (in particular, $q^{-1}(x)\notin \Cone_O$).

\smallskip

A point $x\in A$ is {\em locked} by $O$ if  
for every extension $(B,q)$ of $(A,p)$ such that $q^{-1}(a_0)$ and $q(a_m)$ are defined,
$x$ belongs to the $\leq_{lex}$-interval with endpoints  $q^{-1}(a_0)$ and $q(a_m)$. 
It is {\em locked} by $p$ if it is locked by some orbit of $p$.
This definition of locked, which we will use only in this section to discuss $\SB$, is slightly different than the one  we used earlier in  this section.
A point $x\in A$ is {\em cone-locked} by the cone $C_O$, if it is contained in $C_O$,
and it is locked or meet-locked by $O_A$. 
Finally, say that a point $x\in A$ is {\em cone-locked} by $p$ if it is
cone-locked by some $C\in\Cone(p)$.

\begin{proof}[Proof of Theorem \ref{nowapbor}]
	Let us start with some observations.  Take $(A,p)\in\SB_1$ such that every orbit has at least 3 points and let $a\in A$. If $a$ is not cone-locked by a cone from $\Cone(p)$,
	then there are extensions $(B,q), (C,r)\in\SB_1$ of $(A,p)$ such that $q(a)<_{lex} a$ and $a<_{lex} r(a)$.  
	To see this, simply take $v$,  the immediate predecessor of $a$ in $T_A$ with respect to $<$, add a new point $b$ to obtain $B$ such that  $b$ is the immediate predecessor of $a$ in $B$ with respect to $<_{lex}$,  $v<\meet(b,a)< a$, and 
	$q(a)=b$. 
	The claim below shows that this gives the required $(B,q)$. We will then similarly define $(C,r)$.
	\medskip
	
	{\bf{ Claim.}} The map $q$ preserves $S$ 
	\begin{proof}[Proof of the Claim.]
		Since for every $x\in A$, $\meet(b,x)=\meet(a,x)$, equivalently, we have to show the following. 
		
		\noindent $(\triangle)$ For every $a,b\in A$, we have
		$S(a,b,c)$ iff $S(p(a), p(b),c)$, and similarly,  $S(a,c,b)$ iff $S(p(a), c,p(b))$ and $S(c,a,b)$ iff $S(c, p(a), p(b))$.
		

		We have to consider a number of cases. Denote $m=\meet(a,p(a))$, $n=\meet(b,p(b))$,
		$m_1=\meet( p(a), p^2(a))$ and $n_1=\meet( p(b), p^2(b))$
		(if necessary, extend $(A,p)$ so that $p^2(a)$ and  $p^2(b)$ are  defined). Let $O_a$ and $O_b$ be orbits to which $a$ and $b$ belong, respectively. Without loss of generality, suppose that $O_a$ is increasing and meet-increasing.
		We will frequently use the following simple observations:
		
		(i) If $x<_{lex} a$ and  $m< \meet(x,a)$ or $a <_{lex} x <_{lex} p^2(a)$, then $x$ is $O_a$ locked.
		
		(ii) If $p(a)<_{lex} x$ and $m< \meet(x, p(a))< m_1$, then $x$ is meet-locked by $O_a$.
		
		(iii) If $c< \meet(m, n)$ then $(\triangle)$ holds.
		
		We have to consider the following cases.
		
		\begin{enumerate}
			\item It holds $b <_{lex} a$ and $\meet(b,a)< m$. In that case, we can have (a) $p(b) <_{lex} b$, or
			(b) $b<_{lex} p(b)$,  and $\meet(p(b),a)< m$, or (c) $b<_{lex} p(b)$,  and $m<\meet(p(b),a)$,
			in which case, by (i),  $O_a$ and $O_b$ intertwine.
			
			\item It holds $b <_{lex} a$ and $m< \meet(b,a)$,  or $a <_{lex} b <_{lex} p(a)$, in which case,
			by (i),  $O_a$ and $O_b$ intertwine.
			
			\item It holds $p(a)<_{lex} b$ and $m<\meet(p(a),b)$. In this case, $p^2(a)<_{lex} p(b)$.  
			We can have that: 
			
			(a) It holds $p(a)<_{lex} b <_{lex} p^2(a)$, in which case $O_a$ and $O_b$ intertwine.
			
			(b) It holds  $p^2(a)  <_{lex} b$ and $m_1< \meet(b, p^2(a))$, in which case $p^2(a)  <_{lex} p(b)$ and 
			$m<\meet(p(b), p^2(a))$. In fact, we have $m_1<\meet(p(b), p^2(a))$, as otherwise $O_b$ would be 
			an increasing orbit meet intertwining with $O_a$, which is impossible.
			
			(c) It holds $p^2(a)  <_{lex} b$ and  $m<\meet(b, p^2(a))< m_1$, in which case, by (ii), $O_a$ and $O_b$ meet intertwine and hence $m_1<\meet(p(b), p^2(a))$.

			\item     It holds $p^2(a)  <_{lex} b$ and $\meet(b, p^2(a))< m$. Then $p^2(a)  <_{lex} p(b)$ and either 
			$\meet(p(b), p^2(a))< m$ (with $b<_{lex} p(b)$ or $p(b)<_{lex} b$) or  
			$m<\meet(p(b), p^2(a))< m_1$, in which case $O_a$ and $O_b$ meet intertwine.
		\end{enumerate}
		
		This reduces checking to the following cases.
		
		Case 1: $O_a$ and $O_b$ intertwine. Without loss of generality, $a <_{lex} b <_{lex} p(a)$ (meaning that, if  instead
		$a <_{lex} p^n(b) <_{lex} p(a)$, for  $n=1$ or $n=2$, then the reasoning will be essentially the same).
		This has two subcases:
		(1a) $m< \meet(a,b)$, in which case $m_1<\meet(p(a),p(b))$ and 
		(1b)  $m<\meet(b, p(a))$, in which case $m_1<\meet(p(b), p^2(a))$. 
		
		Taking into account (i), (ii) and (iii), all we have to do is to  directly verify that $(\triangle)$ holds in (1a) and (1b) for a $c$ such that
		$p^2(a)  <_{lex} c$ and $m_1< c$.
		
		Case 2: $O_a$ and $O_b$ meet intertwine. Without loss of generality,  $m< b< m_1$ (again, meaning that, if  instead
		$m < p(b) < m_1$, then the reasoning will be essentially the same).
		
		Taking into account (i), (ii) and (iii), all we have to do is to  directly verify that $(\triangle)$ holds when
		$p^2(a) <_{lex} c <_{lex} p(b)$ and either $m_1<\meet(c,p^2(a) )$ or $m_1<\meet(c,p(b) )$.

		Case 3: $b, p(b) <_{lex} a$ and $\meet(b, a), \meet(p(b),a)< m$.
		
		Then possibilities on $c$ are: (3a) $b, p(b)<_{lex} c<_{lex} a$ and $\meet(c, a)< m$,
		(3b) $p(a)<_{lex}~c$ and $m<\meet(p(a),c)$,
		(3c) $p(a)<_{lex} c$ and $\meet(b,a),\meet(p(b),a)<\meet(p(a),c)< m$
		(3d) $p(a)<_{lex} c$ and $\meet(p(a),c)$ is between $\meet(b,a)$ and $\meet(p(b),a)$
		(this cannot happen though, otherwise $c$ would be meet-locked by $O_b$).

		Case 4: $p(a) <_{lex}   b, p(b)$ and $\meet(b, p(a)), \meet(p(b),p(a))< m$.
		This is very similar to Case 3.
		
		Case 5:  $p^2(a) <_{lex}   b, p(b)$ and $m_1< \meet(b, p^2(a)), \meet(p(b),p^2(a))$.
		Let $p=\max\{\meet(b, p^2(a) ), \meet(p(b), p^2(a) )\}$.
		Then possibilities on $c$ are: 
		(5a) $p^2(a) <_{lex}  c <_{lex}  b$ and $p<\meet(c, p^2(a) )$,
		(5b) $p^2(a) <_{lex}  c <_{lex}  b$ and $\meet(c, p^2(a) )\leq p$,
		(5c) $b,p(b)<_{lex}  c $ and $m_1<  \meet(c, p(b))< n$.	
\end{proof}

	Now we show that for a given 
	$(A,p,q)\in\SB_2$ and $x\in A$ such that $x< p(x)$ 
	there exists $(A', p',q')\in\SB_2$, an extension of $(A,p,q)$, and a word $w(s,t)\in F_2$, such that  $w(p',q')(x)$ is defined and not
	cone-locked by $p'$ or $q'$. Then an argument  presented in the first paragraph of the proof of Proposition \ref{nowap}, will finish the proof. Without loss of generality, every non-trivial orbit of $p$ and $q$ consists of at least three points.
	
	As for any $(A,p)\in\SB_1$, $A$ is a substructure of the \fra limit $M$ of $\SB$, we  consider 
	\[cl_{p}=\{x\in M\colon x \text{ is cone-locked by an orbit of } p\}.\]
	Note that for every orbit $O$ of $p$, the set $\{x\in M\colon x \text{ is cone-locked by } O\}$ is the union of two 
	$\leq_{lex}$-intervals,	one of them constituted  of points locked by $p$, and the other one of points meet-locked by $p$.
	This implies that $cl_{p}$ is the union of at most $2m_{p}$ disjoint $\leq_{lex}$-intervals, where $m_{p}$ is the number of orbits in $p$.
	Denote this collection of $\leq_{lex}$-intervals by
	$\mathcal{I}_{p}$, and  its cardinality  by $n_{p}$. Observe that the following hold:
	
	$(*)$  For every $I\in \mathcal{I}_{p}$ and $x\in I$, there is an extension $(A',p')$ of $(A,p)$ so that $(p')^m(x)<_{lex} I<_{lex}(p')^n(x)$ for some $m,n\in \mathbb{Z}$.

	$(**)$ For every $(A',p') \in\SB_1$ extending $(A,p)$ with 
	$A'\setminus A=\{(p')^{\epsilon}(a),\ldots, (p')^{\epsilon k}(a)\}$ for some $a\in A$,
	$\epsilon\in\{-1,1\}$, and $k\in\mathbb{N}$, and for every  $I\in \mathcal{I}_{p'}$, there is 
	$J\in \mathcal{I}_{p}$ such that $J\subseteq I$. In particular, $n_{p'}\leq n_{p}$.

	We construct the required $(A',p',q')$ and $w$ inductively.
	Let $(A'_0,p'_0,q'_0)=(A,p,q)$ and $w_0=1$. Suppose that we already constructed $(A'_n,p'_n,q'_n)$ and $w_n(s,t)$  and 
	suppose  that $w_{n-1}(p'_{n-1},q'_{n-1})(x)=w_{n-1}(p'_{n},q'_{n})(x)<_{lex} w_n(p'_{n},q'_{n})(x)$.
	Denote $y=w_n(p'_{n},q'_{n})(x)$ and if $y$ is  cone-locked by $p'_{n}$ and $q'_{n}$, proceed as follows.
	Let $I_{p,y}\in  \mathcal{I}_{p'_{n}}$ be the $\leq_{lex}$-interval containing $y$, and similarly define
	$I_{q,y}$. If the right endpoint of $I_{p,y}$ is $<_{lex}$-greater or equal than the right endpoint of $I_{q,y}$, there must exist $k\in\mathbb{Z}$ such that $y<_{lex} (q'')^k(y)\notin I_{p,y}$ in some extension $(A'',p'',q'')$ of $(A'_n,p'_n,q'_n)$. Take the smallest such $k$, set $w_{n+1}(s,t)=t^kw_n(s,t)$ and let $(A'_{n+1},p'_{n+1},q'_{n+1})$ be a minimal such extension. This can be done by observation $(*)$.
	Similarly,  If the right endpoint of $I_{q,y}$ is $<_{lex}$-greater than the right endpoint of $I_{p,y}$, there must exist $k\in\mathbb{Z}$ such that, in some extension  $(A'',p'',q'')$, we have $y<_{lex} (q'')^k(y)\notin I_{q,y}$. Take the smallest such $k$, set $w_{n+1}(s,t)=s^kw_n(s,t)$ and let $(A'_{n+1},p'_{n+1},q'_{n+1})$ be a minimal such extension. 
	
	Observation $(**)$ implies that after at most $n_{p}+n_q$ many steps,
	this construction will stop, i.e., that for some  $n\leq n_{p}+n_q$, we will have that 
	$w_n(p'_n,q'_n)(x)$ is not cone-locked by $p'_n$ or by $q'_n$. 
\end{proof}



Directed ultrahomogeneous graphs were classified
by Cherlin \cite{C}  and their precompact Ramsey expansions were described by Jasi\'nski-Laflamme-Nguyen van Th\'e-Woodrow \cite{JLNW}. See  page 74 in \cite{C} for the list of them, see also page 5 in~\cite{JLNW}.
By Kechris-Pestov-Todorcevic \cite{KPT} and Nguyen Van Th\'e \cite{NVT}, automorphism groups of
those expansions are extremely amenable. The notation we will use comes from \cite{JLNW}.

\begin{theorem}\label{direct}
Let $M$ be a 
	precompact Ramsey expansion of a directed ultrahomogeneous graph, and let $\f=\age(M)$.
	Then $\f_2$ does not have WAP.
\end{theorem}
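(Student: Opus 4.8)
The plan is to reduce everything to Proposition \ref{nowap}: since $M$ is a precompact Ramsey expansion, $\f=\age(M)$ is an \fra order class, so it suffices to verify the hypothesis of that proposition, and then ``$\f_2$ has no WAP'' follows at once. Thus I fix $(A,p)\in\f_1$ and a point $a\in A$ that is not locked by $p$, and I look for extensions $(B,r),(C,s)\in\f_1$ of $(A,p)$ with $r(a)<a$ and $a<s(a)$. The two constructions are symmetric, so I would only describe $(B,r)$, obtaining $(C,s)$ by inserting the new point just above $a$ rather than just below.

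First I would record the order-geometry forced by unlockedness. If $a\in\zdef(p)$ then $a$ is locked (if $a\in\dom(p)$, one of the pairs $(a,p(a))$, $(p(a),a)$ straddles $a$; similarly if $a\in\rng(p)$), so $a\notin\zdef(p)$. Moreover, if some $c\in\dom(p)$ satisfied $c<a<p(c)$ or $p(c)<a<c$ then $a$ would again be locked; hence $c<a\iff p(c)<a$ for every $c\in\dom(p)$. Consequently $\{d\in\rng(p):d<a\}=\{p(c):c<a\}$, so $a$ and the position prescribed for its image both fall in the single gap $(\max\{p(c):c<a\},\ \min\{p(c):c>a\})$ of $\rng(p)$; in particular placing a point realizing the $p$-pushforward of $\qftp_A(a/\dom(p))$ immediately below $a$ violates no order constraint.

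To produce $(B,r)$ I set $r=p\cup\{(a,b)\}$ for a new point $b$. For $r$ to be a partial automorphism I need only that $p\cup\{(a,b)\}$ be an isomorphism of $\dom(p)\cup\{a\}$ onto $\rng(p)\cup\{b\}$, i.e.\ that $b$ realize over $\rng(p)$ the image under $p$ of $\qftp_A(a/\dom(p))$; the relations of $b$ to $A\setminus\rng(p)$ are irrelevant to this. So I let $A''$ be the structure on $\rng(p)\cup\{b\}$ obtained by transporting the substructure $\dom(p)\cup\{a\}$ of $A$ through $p\cup\{(a,b)\}$ (then $A''\in\f$, being isomorphic to a substructure of $A$), and I amalgamate $A$ and $A''$ over their common part $\rng(p)$ with the relative order pre-set to $b<a$. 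By the previous paragraph $a$ and $b$ lie in the same gap of $\rng(p)$, so the choice $b<a$ is compatible with the base. For the primitive digraphs whose precompact Ramsey expansion is a full order expansion this is precisely Corollary \ref{indep}, so nothing further is needed; for the remaining entries of Cherlin's list \cite{C} I would invoke the explicit descriptions in \cite{JLNW}: the digraph reducts amalgamate freely or strongly (omitting the prescribed tournaments), the order is free within a congruence class subject to convexity, so the edge between $a$ and $b$ and the block of $b$ can be chosen so that $b$ sits in $a$'s block immediately below it, completing the amalgam inside $\f$.

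The hard part will be this last step for the imprimitive (composition-type) and semigeneric expansions, where the linear order must be convex for an equivalence relation and certain finite configurations are omitted. There I would have to check, using the JLNW normal forms block by block, that inserting $b$ immediately below $a$ in $a$'s block simultaneously preserves convexity, avoids the omitted substructures, and is consistent with $b<a$. The only genuine danger is creating a forbidden configuration; but any such configuration among $b$ and points of $\rng(p)$ is, via the isomorphism $p\cup\{(a,b)\}$, a copy of a configuration among $a$ and points of $\dom(p)$ already present in $A\in\f$, and the relations of $b$ to $A\setminus\rng(p)$ are supplied (harmlessly) by free or strong amalgamation. Hence a legal completion with $b<a$ always exists, and the symmetric construction yields $(C,s)$ with $a<s(a)$, which verifies the hypothesis of Proposition \ref{nowap} and proves the theorem.
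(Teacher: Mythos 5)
Your overall route is the same as the paper's: reduce to Proposition \ref{nowap}, then run through the Cherlin/JLNW classification, with Corollary \ref{indep} disposing of the classes that are full order expansions; your uniform analysis of unlockedness (that $a\notin\zdef(p)$, that $c<a\iff p(c)<a$ for all $c\in\dom(p)$, hence that $a$ and its prescribed image lie in a single gap of $\rng(p)$) is correct and is a nice packaging of what the paper does case by case.

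The genuine gap is in the cases that are \emph{not} full order expansions, and it is twofold. First, your taxonomy of the hard cases is off: the generic partial order (and its twisted variant $\mathcal{P}(3)$) is itself one of the homogeneous directed graphs, and there the linear order is constrained to be a linear \emph{extension} of the digraph relation --- it is neither ``free within a congruence class subject to convexity'' nor free in any other sense, so this case falls outside both halves of your dichotomy; it is exactly the case the paper must treat separately (Corollary \ref{poset}). Second, the general principle you offer --- any forbidden configuration among $b$ and $\rng(p)$ is a transported copy of one already in $A$, while the relations of $b$ to $A\setminus\rng(p)$ are ``supplied (harmlessly) by free or strong amalgamation'' --- is insufficient precisely where the work lies. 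In the poset case, strong amalgamation of the reducts is by transitive closure, which \emph{forces} relations between $b$ and $A\setminus\rng(p)$: for instance $d\prec b$ is forced whenever $d\prec c$ and $p^{-1}(c)\prec a$ for some $c\in\rng(p)$. Whether these forced relations are compatible with the pre-set position of $b$ immediately below $a$ is not formal; it needs unlockedness a second time ($p^{-1}(c)<a$ gives $c<a$, hence $d<a$, hence $d<b$), and carrying out this verification is exactly the content of the paper's Corollary \ref{poset}, where $\prec^Q$ is written down explicitly. In the semigeneric case, the parity constraint on pairs-of-pairs ties $b$'s edges to points inside and outside $\rng(p)$ within one forbidden configuration, so neither of your two claims covers it; the missing idea, supplied in the paper's sketch, is to place $b$ in the same non-edge class as $a$, which forces $b$'s edges to each other class to be a global copy or global flip of $a$'s. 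So your construction (transport $\qftp_A(a/\dom(p))$ and amalgamate over $\rng(p)$ with $b<a$ pre-set) is the right one, but the justification you give for why a legal completion always exists would fail exactly in the cases where the theorem requires real, class-specific work.
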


\begin{proof}[Sketch of the proof]
			
Proposition \ref{nowap} applies to the age of each of these structures.			
A number of those structures are directly taken care of by Corollary \ref{indep}. These are rational numbers and 
precompact Ramsey expansions 
of: the random tournament $T^\omega$, $\Gamma_n$-- the random directed graph that does not embed the edgeless graph on $n$ vertices, $n\leq\omega$, $\mathcal{T}$ -- the random directed graph that does not embed finite
tournaments from some fixed set $\mathcal{T}$.
Moreover, structures $S(2)^*$ and $S(3)^*$, that is,
precompact Ramsey expansions of  $S(2)$ and $S(3)$,
are first-order simply bi-definable to structures $Q_2$ 
and $Q_3$, discussed by Nguyen van Th\'e \cite{NVT}, whose age is of the form as in Corollary \ref{indep}.

Furthermore, proofs for the precompact Ramsey expansion of the structures of the form $T[I_n]$, $I_n[T]$, where $I_n$ is the edgeless graph on $n$ vertices, $n\leq \omega$, and $T$ is a homogeneous tournament, as well as  of $\widehat{\mathbb{Q}}$,
$\widehat{T^\omega}$ and of the complete $n$-partite random directed graph, $n\leq\omega$,  are  essentially the same as those for the structures taken care of by Corollary~\ref{indep}
(i.e. perhaps there are some additional unary predicates, which do not change the proof in an essential way). Let us discuss
here one of these structures.
We describe ${\rm Age}(T[I_n]^*)$ of the expansion $T[I_n]^*$ of $T[I_n]$, 
where $T$ is a generic
tournament and $I_n$ is the edgeless  directed graph on $n< \omega$
vertices with the usual ordering (inherited from the natural numbers).
Consider the language $L=\{E,<, L_1,\ldots, L_n\}$, where $E,<$ are binary predicates and $L_1,\ldots, L_n$ are unary predicates. We will use $E$ for the edge relation and $<$  for the linear
order. 
We let the ${\rm Age}(T[I_n]^*)$ to consist of substructures of structures whose universe is of the form $S\times I_n$, where $S$ is a linearly ordered tournament (the choice of a linear ordering is arbitrary). A pair $((x,i),(y,j))$ is an edge in $S\times I_n$ iff the pair $(x,y)$ is an edge in $S$. The ordering we put on  $S\times I_n$ is lexicographic with respect to the order on $S$ and on $I_n$. Finally, we set $L_i(x,j)$ iff $i=j$.
 It is clear how to modify the proof from Corollary \ref{indep} to prove
that assumptions of Proposition \ref{nowap} are satisfied for ${\rm Age}(T[I_n]^*)$ as well.
			
We have already discussed the universal ordered poset in Corollary \ref{poset}.
The precompact Ramsey expansion $\mathcal{P}(3)^*$ of the `twisted' universal ordered poset $\mathcal{P}(3)$ is  first-order simply bi-definable to the \fra limit of the family $\mathcal{K}_0$ of ordered posets, additionally
equipped with~3  subsets (described using unary predicates) forming a partition of the universe of the ordered poset,
see the bottom of the page 21  in \cite{JLNW}.
			
Proposition \ref{nowap} also applies to the age of $\mathcal{S}^*$, the precompact Ramsey expansion  of the
semigeneric directed graph $\mathcal{S}$, which is rather straightforward to check.
In fact, for given  $(A,p)$  and $a\in A$ not locked by $p$,
the required $q(a)$ and $r(a)$ (notation taken from the statement of Proposition \ref{nowap})
can be chosen in the same equivalence class with respect to  the non-edge equivalence relation in which $a$ is.

\end{proof}

\section{The two-dimensional case. Similarity classes}

Slutsky \cite{Sl} showed that every $2$-dimensional topological similarity class in $\Aut(\mathbbm{Q})$ is meager. In this section, we extract from Slutsky's arguments a general condition on a structure $M$ that implies that every $2$-dimensional topological similarity class in $\Aut(M)$ is meager (Theorem \ref{th:MeagerSim}). 

Let $\f$ be a \fra class. Generalizing the terminology introduced in \cite{Sl}, for $A,B \in \f$ with $B \subseteq A$, and a partial automorphism $q$ of $A$ such that $ \zdef(q) \cap B=\emptyset$, we say that $B$ is \emph{free from} $q$ if for every $n$, every relation symbol $R$ in the signature of $\f$ of arity $n$, for all $x_1, \ldots, x_n \in B \cup \dom(q)$ we have $R(x_1, \ldots, x_n)$ iff  $R(y_1, \ldots, y_n)$, where $y_i=x_i$ if $x_i \in B$, and $y_i=q(x_i)$ if $x_i \in \dom(q)$. In other words, we can extend $q$ so that $q(x)=x$ for every $x \in B$.

We say that $\f$ has \emph{liberating automorphisms} if for any partial automorphisms $p,q$ of $A \in \f$ with no cyclic orbits there exists $N \in \NN$ such that, for every $N'>N$, $p$ can be extended to a partial automorphism $p'$ of an element of $\f$ so that $(p')^n[A]$ is free from $q$ for all $n$ with $N \leq n \leq N'$. 
 
Let $\f$ be an \fra order class with an order relation $<$, and let $M$ be the limit of $\f$. Let $p$ be a partial automorphism of $M$. For a convex $A \subseteq M$ (i.e., $x,y \in A$, and $x<z<y$ entails that $z \in A$), we say that $p$ is \emph{increasing on} $A$ if for every $x \in A$, $p$ can be extended so that $p(x)>x$; it is \emph{decreasing on} $A$ if for every $x \in A$, $p$ can be extended so that $p(x)<x$; it is \emph{monotone on} $A$ if it is increasing or decreasing on $A$.  We say that an extension $p'$ of $p$ \emph{does not change monotonicity} of $p$ if there are no new fixed points in $p'$, and $p'$ is increasing (decreasing) on $(a,b)$ iff $p$ is increasing (decreasing) on $(a,b)$, for $a,b \in \zdef(p)$.
We say that $p$ is \emph{eventually increasing} if there exist $x,y \in \supp(p)$ such that $z<p(z)$ for every $z \in \supp(p)$ such that either $z \leq x$ or $y \leq z$ (i.e., the first and the last orbits of $p$ are increasing.) 


Let $(p,q)$ be a pair of partial automorphisms of $M$. We say that $x$ is in a \emph{final segment} (or \emph{initial segment}) of $(p,q)$ if there exists a common fixed point $y$ of $p$ and $q$ such that $p$ is monotone on $[x,y)$ (or $(y,x]$.) We say that $(p,q)$ is \emph{elementary} if both $p$ and $q$ are eventually increasing, and the only common fixed points of $p$ and $q$ are the minimum $\min(\dom(p))=\min(\dom(q))$, and the maximum $\max(\dom(p))=\max(\dom(q))$ of their domains. A pair $(p,q)$ is \emph{piecewise elementary} if we can find $E_0 \leq \ldots \leq E_n$, called \emph{elementary components} of $(p,q)$, such that $\bigcup_i E_i=\zdef(p) \cup \zdef(q)$, and $(p,q)$ is elementary when restricted to each $E_i$.

\begin{lemma}
\label{le:ElPair}
Let $\f$ be a full order expansion with SAP. Let $(p,q)$ be an elementary pair. Then there exists an extension $(p',q')$ of $(p,q)$, and $w \in F(s,t)$, such that $p'$ does not change monotonicity of $p$, and $w(p',q')[\zdef(q')]$ is in the unique final segment of $(p',q')$.
\end{lemma}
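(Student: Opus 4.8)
The plan is to adapt the ``escape'' construction from the proofs of Proposition \ref{nowap} and Theorem \ref{nowapbor}: pump the support of $q'$ upward by a word until it enters the final segment, using order-preservation to reduce the problem to a single point. Since $(p,q)$ is elementary its only common fixed points are $\min=\min(\dom(p))$ and $\max=\max(\dom(p))$, and $p$ is eventually increasing, so there is a largest $c<\max$ with $p$ increasing on $I:=[c,\max)$; this is the unique final segment. Throughout I would only lengthen orbits, never adding points below $\min$ or above $\max$ and never creating a fixed point, so that $(p',q')$ stays elementary, keeps $\min,\max$ as its only common fixed points, keeps $I$ as its final segment, and $p'$ does not change the monotonicity of $p$.

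The engine is an upward-mobility fact: every $y\in(\min,\max)$ that is not a common fixed point can be moved strictly upward by one of $p^{\pm1},q^{\pm1}$. Since partial automorphisms preserve $<$, whenever $g\in\{p,q\}$ moves $y$ exactly one of $g,g^{-1}$ raises it (if $g(y)<y$ then $g^{-1}(y)>y$, and symmetrically), and $y$ is moved by at least one of $p,q$ as it is not a common fixed point. When $q$ moves $y$ I lift with $q^{\pm1}$, extending $q'$ freely; here I use that $\f$ is a full order expansion with SAP, via the observation recalled after the definition of full order expansion together with Lemma \ref{le:FlexExt}. When $q$ fixes $y$, then $p$ moves it, and I lift with $p$ or $p^{-1}$ according to the existing monotonicity of $p$ at $y$, extending $p'$ only in the direction $p$ already goes, so that no monotonicity changes and no new common fixed point appears.

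Now I would build $w$ as in Proposition \ref{nowap}, one generator at a time. Let $z_0$ be the least element of $\zdef(q')$ above $\min$; it is not a common fixed point, hence liftable. Below $I$ there are only finitely many ``barrier'' intervals coming from the finite data of $p,q$, namely the maximal intervals on which $p$ fails to increase (those around interior fixed points of $p$ and the decreasing intervals), exactly as with the intervals $\mathcal{I}_p$ counted in the proof of Theorem \ref{nowapbor}. Applying at each step the generator that raises the current position, I carry $z_0$ strictly upward; since there are finitely many barriers and each is crossed in finitely many steps, $z_0$ eventually enters $I$. I then set $w$ to the accumulated word and extend $(p',q')$ so that $w(p',q')$ is total on $\zdef(q')$. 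Because $w(p',q')$ preserves $<$ and $w(p',q')(z_0)\in I$, every $z\ge z_0$ satisfies $w(p',q')(z)\in I$ (and stays below $\max$, as $\max$ is a preserved fixed point); hence the whole support of $q'$ is carried into $I$, the boundary point $\min$ being the only, harmless, exception since it is fixed.

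The step I expect to be delicate is the single lifting move, where three demands meet at once: raise the point past a barrier, leave the monotonicity of $p$ untouched, and avoid manufacturing a new common fixed point (which would shift the final segment). The resolution is to lift with $q'$ whenever $q$ moves the point, and to resort to $p^{\pm1}$ only at points fixed by $q$, where $p$ necessarily moves them and the lift merely follows $p$'s existing direction; the remaining points, termination from the finiteness of the barrier set and the sufficiency of a single word for all of $\zdef(q')$ by order-preservation, are routine and follow the patterns already used in Proposition \ref{nowap} and Theorem \ref{nowapbor}.
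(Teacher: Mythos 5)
Your high-level strategy (drive one point upward with a word built generator by generator, then use order-preservation to handle all of $\zdef(q')$ at once) is the same as the paper's, but the engine you propose for moving the point does not work, and the termination claim that replaces the paper's induction is false as stated. The problem is that in a dense order, infinitely many \emph{strictly} upward moves need not cross any given point, so "finitely many barriers, each crossed in finitely many steps" begs the question. Concretely: elementarity allows $q$ to have an interior fixed point $u$ (it only forbids \emph{common} interior fixed points), and $u$ traps your algorithm. Once the current point $y$ lies below $u$, every $q$-move keeps it below $u$, since order-preservation forces $q^{\pm 1}(y) < q^{\pm 1}(u) = u$ in any extension; and your priority rule ("lift with $q^{\pm 1}$ whenever $q$ moves $y$, use $p$ only when $q$ fixes $y$") never switches to $p$, because the current point always lies in a $q$-orbit and hence is always moved by $q$. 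The run produces $b < q(b) < q^2(b) < \cdots < u$ forever. Relatedly, your barriers are misidentified: you count only "maximal intervals on which $p$ fails to increase," but fixed points of $q$ are barriers in exactly the same way (they block all $q$-moves), and the appeal to the interval-counting $(**)$ of Theorem \ref{nowapbor} has no counterpart established in this setting.

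What is missing is precisely the crossing mechanism that constitutes the paper's proof. A fixed point $z$ of one generator can only be crossed by the \emph{other} generator, and only from within a bounded window just below $z$ (e.g., from $(q^{-1}(z),z)$ when $q(z)>z$); so one must first maneuver the tracked point into that window by a controlled extension of $p$ that respects monotonicity, then apply $q^{\pm 1}$. The paper organizes this as an induction over the enumeration $a_0 < \cdots < a_m$ of $\zdef(p)\setminus\{\min,\max\}$: at stage $i$, either a power $s^{\epsilon k}$ already pushes the tracked point past $a_i$, or one uses the word $s^{-l}t^{-\epsilon}s$ — place $p_i(b)$ inside the window $((q_i)^{\epsilon}(c),c)$, jump over $c$ with $t^{-\epsilon}$, then transport back with $s^{-l}$ — where $c=a_i$ if $q$ moves $a_i$, and otherwise (the delicate case you flag but do not resolve) $c=(p_i)^l(a_i)$ is a copy of $a_i$ pulled back by a power of $p$ into an interval free of $\zdef(p_{i-1})\cup\zdef(q_{i-1})$, exploiting that $a_i$ cannot be fixed by both $p$ and $q$. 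The accompanying invariant, that the only new element of $\zdef(p_i)\cup\zdef(q_i)$ above $a_i$ is the tracked image, is what guarantees the final segment is not disturbed by the extensions. Without this window construction and this invariant, your proposal reduces to the unsubstantiated assertion that upward mobility implies eventual arrival, which is exactly the gap.
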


\begin{proof}
Without loss of generality, we can assume that $\min(\supp(p))<\supp(q)$. Let $a_0< \ldots < a_{m}$ be the enumeration of $\zdef(p) \setminus \{\min(\zdef(p)), \max(\zdef(p))\}$. We can assume that $a_0$ is not a fixed point of $p$.

We construct $w_i \in F(s,t)$, $i\leq m$, and extensions $(p_i,q_i)$ of $(p_{i-1},q_{i-1})$ such that $w_i \ldots w_0(p_i,q_i)(a_0)>a_i$. Moreover, we require that 
the only new element in $\zdef(p_i)$ or $\zdef(q_i)$ above $a_{i}$ is $w_i \ldots w_0(p_i,q_i)(a_0)$. 

Put $w_{-1}=\emptyset$, $p_{-1}=p$, $q_{-1}=q$. Fix $0 \leq i \leq m$, and suppose that $w_j, p_j,q_j$ have been already constructed for $j<i$. Set $b=w_{i-1}\ldots w_0(p_{i-1},q_{i-1})(a_0)$. If there is an extension $p_i$ of $p_{i-1}$ such that $(p_i)^{\epsilon k}(b)>a_{i}$ for some $k \in \NN$, and $\epsilon \in \{-1,1\}$, we take the least such $k$, and put $w_i=s^{\epsilon k}$, $q_i=q_{i-1}$.

Suppose otherwise. 
If $a_{i+1}$ is not a fixed point of $q$, put $c=a_{i}$, and $l=0$. Otherwise, as $(p_{i-1}, q_{i-1})$ is elementary, it is not a fixed point of $p_{i-1}$, and so we can extend $p_{i-1}$ to some $p_i$ by adding only elements below $a_{i}$, so that, for some $l \in \ZZ$, $b<(p_i)^l(a_{i})<a_{i}$, and $(b,(p_i)^l(a_{i}))$ has empty intersection with both $\zdef(p_{i-1})$ and $\zdef(q_{i-1})$.
Put $c=(p_i)^l(a_{i})$.

Let $\epsilon \in \{-1,1\}$ be such that there exists an extension $q_i$ of $q_{i-1}$ with $(q_i)^{\epsilon}(c)<c$. 
Because $\f$ is a full order expansion with SAP, there is an extension of $p_i$, which we will also denote by $p_i$, such that $p_i(b) \in ((q_i)^{\epsilon}(c),c)$. But then $(q_i)^{-\epsilon}(p_i(b))>c$. Thus, 
for $w_i=s^{-l}t^{-\epsilon}s$, we have that $w_i\ldots w_0(p_i,q_i)(a_0)>a_{i}$ and $w_i\ldots w_0(p_i,q_i)(a_0)$ is the only new element in $\zdef(p_i)$ or $\zdef(q_i)$ above $a_{i}$.

\end{proof}


\begin{lemma}
\label{le:Pom}
Let $\f$ be a full order expansion with SAP, and such that $\f^-$ has liberating automorphisms. Let $M$ be the limit of $\f$. Let $(p,q)$ be a piecewise elementary pair of partial automorphisms of $M$ such that, for some $w \in F(s,t)$, $w(p,q)(x)$ is in a final segment of $(p,q)$ for every $x \in \zdef(q)$. Then for any $v \in F(s,t)$, and $N \in \NN$, there is $N' \geq N$, and a pair $(p',q')$ extending $(p,q)$ such that $vs^{N'}w(p',q')(x)$ is in a final segment of $(p,q)$ for $x \in \zdef(q')$.
\end{lemma}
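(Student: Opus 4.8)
The plan is to reduce to a single point, push it deep into a final segment with a large power of $p$, and then arrange the extension so that the remaining word $v$ can only act on it through its $s$-part. Fix $x \in \zdef(q)$ and set $b_x = w(p,q)(x)$; by hypothesis $b_x$ lies in a final segment, so there is a common fixed point $y_x$ of $p$ and $q$ with $p$ increasing on an interval $[z_x,y_x)$ containing $b_x$ (we are in the eventually increasing situation, so ``monotone'' means increasing here). Since $p$ and $q$ preserve the linear order, neither has a non-trivial cyclic orbit, so the liberating-automorphisms hypothesis on $\f^-$ is applicable to them. As $\zdef(q)$ is finite, only finitely many final segments arise, and it suffices to carry out the construction below simultaneously in each; I describe it for one $x$. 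Throughout, the extension will add new points only inside the segments $[z_x,y_x)$, so it will not change monotonicity, $y_x$ will remain a common fixed point, and every final segment of $(p,q)$ will remain a final segment of $(p',q')$; in particular $w(p',q')(x)=b_x$.

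Let $A$ be a finite substructure of $M$ containing $\zdef(p)\cup\zdef(q)$ together with every $b_x$ and the points of the $w$-evaluations, and let $L$ bound the length of $v$. First I would apply the liberating-automorphisms property of $\f^-$ to $(A^-,p,q)$, obtaining $N_0$ such that for every $M_1>N_0$ the reduct $p$ extends to some $p'$ with $(p')^n[A]$ free from $q$ for all $N_0\le n\le M_1$. I then choose $N'\ge\max(N,\,N_0+L+1)$, set $M_1=N'+L$, and take the corresponding $p'$; freeness lets us extend $q$ to $q'$ fixing each iterate $(p')^n(b_x)$ with $N_0<n\le M_1$. Next, using that $\f$ is a full order expansion with SAP, I would realize this abstract extension back inside $M$ by the interval-placement observation about such expansions from the Definitions section: all new iterates are inserted into $[z_x,y_x)$, in increasing $<$-order, above every element of $(\dom(q)\cup\rng(q))\cap[z_x,y_x)$ but below $y_x$. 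Because $y_x$ is a $q$-fixed point, placing the iterates above all existing sub-$y_x$ points of $q$ makes the order-part of freeness automatic, so $q'$ genuinely fixes them; and placing them increasingly keeps $p'$ increasing on $[z_x,y_x)$, preserving monotonicity.

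Now put $a_x=(p')^{N'}(b_x)=s^{N'}w(p',q')(x)$, and let $e$ be the exponent sum of $s$ in the reduced word $v$. Reading $v$ from the right, every letter $t^{\pm1}$ acts as the identity on the current point, since that point is always an iterate $(p')^{N'+j}(b_x)$ with $|j|\le L<N'-N_0$, hence a $q'$-fixed new point placed inside $[z_x,y_x)$; meanwhile each $s^{\pm1}$ shifts the iterate index by $\pm1$. Therefore $v(p',q')(a_x)=(p')^{N'+e}(b_x)$, an iterate of index in $[N'-L,N'+L]\subseteq(N_0,M_1]$, so it is one of the points we placed in $[z_x,y_x)$. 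Thus $vs^{N'}w(p',q')(x)$ lies in the final segment $[z_x,y_x)$ of $(p,q)$. Performing the construction for all $x\in\zdef(q)$ at once, with the same $N'$ and the same extension $(p',q')$, completes the proof.

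I expect the main obstacle to be the coherence of this single extension $(p',q')$: one must simultaneously secure freeness of the high $p'$-iterates from $q$ via liberating automorphisms on the reduct, insert those iterates into the correct final segments with unchanged monotonicity using SAP and the full order expansion, and guarantee that the order-part of freeness holds so that $q'$ really does fix them — all uniformly over the finitely many points of $\zdef(q)$, so that one $N'$ and one pair $(p',q')$ serve every $x$.
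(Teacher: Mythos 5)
Your proposal follows essentially the same route as the paper's proof: first make the order part of freeness automatic by arranging the relevant images of $\zdef(q)$ above the $q$-points of their final segments (your interval placement in $[z_x,y_x)$ plays the role of the paper's bound $s^{N_0}w(p,q)(x)>\supp(q)\cap E_i$), then invoke liberating automorphisms of $\f^-$ together with the full-order-expansion/SAP property to obtain a window of roughly $2\left|v\right|$ consecutive high $p'$-iterates that are free from $q$, extend $q$ to $q'$ fixing that window, and finally read $v$ letter by letter, with $t^{\pm 1}$ acting as the identity and $s^{\pm 1}$ shifting the iterate index by one, so that the evaluation never leaves the window. The remaining differences are bookkeeping only (your per-point iterates $(p')^{n}(b_x)$ and window $[N'-L,N'+L]$ versus the paper's sets $s^{N_0+N_1+n}w(p',q)[\zdef(q)]$ with $n\le 2\left|v\right|+N$ and $N'=N_0+N_1+\left|v\right|+N$), so the argument is correct and matches the paper's.
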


\begin{proof}
Let $E_0 \leq \ldots  \leq E_n \subseteq M$ be elementary components of $(p,q)$. Because $w(p,q)(x)$ is in a final segment of $(p,q)$ for every $x \in \zdef(q)$, and $p$ is eventually increasing on each $E_i$, we can assume that there is $N_0 \in \NN$ such that
\[ s^{N_0}w(p,q)(x)>\supp(q) \cap E_i \]
for every $i \leq n$, and $x \in \supp(q) \cap E_i$. Then 
\begin{equation}
\label{eq:1}
x \leq s^{N_0}w(p,q)(y) \mbox{ if and only if } q(x) \leq s^{N_0}w(p,q)(y)
\end{equation}
for every $x,y \in \zdef(q)$.

Because $\f^-$ has liberating automorphisms, and $\f$ is a full order expansion with SAP, we can find $N_1 \in \NN$, and an extension $p'$ of $p$ such that $s^{N_0+N_1+n}w(p',q)[\zdef(q)]$ is free from $q$ in $\f$ for $n \leq 2 \left| v \right|+N$, and (\ref{eq:1}) still holds for $\zdef(q)$. But this means that $s^{N_0+N_1+n}w(p',q)[\zdef(q)]$ is free from $q$ in $\f$, and we can extend $q$ to $q'$ so that
\[ q'(s^{N_0+N_1+n}w(p',q')(x))=s^{N_0+N_1+n}w(p',q')(x) \]
for $n \leq 2\left| v \right|+N$ and $x \in \zdef(q)$. It is easy to see that then
\[ vs^{N_0+N_1+\left|v \right|+N}w(p',q')(x) \geq w(p',q')(x) \]
for $n \leq N$, and so $vs^{N_0+N_1+\left|v \right|+N}w(p',q')(x)$ is in a final segment of $p'$, for $x \in \zdef(q')$. Therefore $N'=N_0+N_1+\left| v \right|+N$ is as required. 
\end{proof}

\begin{lemma}
\label{le:PieceElPair}
Let $\f$ be a full order expansion with SAP, and such that $\f^-$ has liberating automorphisms. Then for every piecewise elementary pair $(p,q)$ there exists a piecewise elementary pair $(p',q')$ extending $(p,q)$, and $w \in F(s,t)$ such that $w(p',q')(x)$ is in a final segment of $(p',q')$ for $x \in \zdef(q')$.
\end{lemma}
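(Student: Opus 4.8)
The plan is to induct on the number $n+1$ of elementary components $E_0 \le \cdots \le E_n$ of the piecewise elementary pair $(p,q)$. The base case $n=0$ is immediate: then $(p,q)$ is elementary, and Lemma \ref{le:ElPair} produces an extension $(p',q')$ and a word $w$ placing all of $\zdef(q')$ in the unique final segment. The structural fact driving the induction is that consecutive elementary components meet in a point that is a common fixed point of both $p$ and $q$. Since $p$ and $q$ preserve $<$, each such common fixed point $c$ splits $M$ into two pieces, each invariant under $p$, under $q$, and hence under every word $w(p,q)$. Thus the restriction of $(p,q)$ to any union of consecutive components is again piecewise elementary, no word can carry a point out of its own component, and — because $\f$ is a full order expansion with SAP — extensions can be built independently above and below $c$.

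For the inductive step I split off the top component $E_n$, writing $c=\min E_n$. Applying the inductive hypothesis to the restriction of $(p,q)$ to $E_0\cup\cdots\cup E_{n-1}$, realized (using SAP) as an extension of $(p,q)$ adding points only below $c$, yields an extension and a word $w$ such that $w(p,q)(x)$ lies in a final segment for every $x\in\zdef(q)$ with $x<c$. Applying Lemma \ref{le:ElPair} to the restriction to $E_n$ (an elementary pair), realized as an extension adding points only above $c$, yields a word $u$ placing $\zdef(q)\cap E_n$ in the final segment of $E_n$ without changing monotonicity. I then combine these into a single word of the shape $W=u\,s^{N'}w$, and argue componentwise. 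On the bottom components the word $w$ already does the work; I invoke Lemma \ref{le:Pom} applied to the restriction below $c$, with $v=u$, to obtain $N'$ together with an extension which — via the liberating automorphisms of $\f^-$ — freezes the landing zones of $w$ (extending $q$ so that it fixes them) and pushes them high with $s^{N'}$, so that prepending $u$ leaves every bottom point inside its final segment. On $E_n$, since $w$ and $s^{N'}$ fix $E_n$ setwise, I choose $u$ (again by Lemma \ref{le:ElPair}) so that it places the resulting points of $\zdef(q')\cap E_n$ in the final segment of $E_n$.

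The main obstacle, and the genuinely delicate point, is that a single word must simultaneously land every point of $\zdef(q')$ in a final segment of \emph{its own} component, whereas a word tuned to one component acts nontrivially on the others. This is exactly what the freezing mechanism of Lemma \ref{le:Pom} is designed to overcome: once a component's points have been placed in their final segments, the liberating automorphisms let us extend $q$ to fix them, after which every later segment of $W$ acts on them only through its $p$-content, i.e.\ as an order-preserving shift; buffered by a sufficiently large power $s^{N'}$, such a shift keeps them inside the increasing final region, which by ultrahomogeneity of $M$ extends arbitrarily far upward. The subtle issue I expect to require the most care is a potential circular dependency: the buffer $s^{N'}$ needed to protect the already-placed bottom components depends on $|u|$ (through Lemma \ref{le:Pom}), while $u$ must be chosen after the buffer has acted on $E_n$. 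I would break this by first fixing $u$ for the current configuration of $E_n$, then invoking Lemma \ref{le:Pom} — which supplies an $N'$ as large as desired — and re-deriving $u$ for the post-buffer configuration, using that the extensions furnished by Lemma \ref{le:ElPair} do not change monotonicity, so the re-derivation stays within the same bounded word-length regime and does not force a larger buffer.

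Iterating this step $n$ times produces an extension $(p',q')$ of $(p,q)$ and a single word $w\in F(s,t)$ with $w(p',q')(x)$ in a final segment of $(p',q')$ for every $x\in\zdef(q')$, which is the assertion of the lemma.
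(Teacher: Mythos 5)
Your overall strategy --- induction on the number of elementary components, splitting off one elementary component, and combining a word from Lemma \ref{le:ElPair}, a word from the inductive hypothesis, and a buffer power of $s$ glued in via Lemma \ref{le:Pom} --- is the same as the paper's, but with the decomposition reversed: you split off the \emph{top} component $E_n$ and induct on the bottom, whereas the paper splits off the \emph{bottom} elementary component $E_0$ and inducts on the top part $E_1$. This reversal is precisely what creates the circular dependency you yourself flag (the buffer $N'$ depends on $\left|u\right|$ through Lemma \ref{le:Pom}, while $u$ must be chosen after $s^{N'}w$ has acted on $E_n$), and your proposed escape does not work. Re-deriving $u$ for the post-buffer configuration via Lemma \ref{le:ElPair} produces, in general, a \emph{longer} word: the word in Lemma \ref{le:ElPair} is built by climbing past the elements $a_0,\ldots,a_m$ of the configuration one at a time, so its length grows with the number of points, and the post-buffer configuration contains the new points $s^{N'}w[\zdef(q')\cap E_n]$ together with all extension points needed to define them. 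A longer $u$ forces, via Lemma \ref{le:Pom}, a larger $N'$, which changes the configuration on $E_n$ again; you give no argument that this loop stabilizes. The claim that monotonicity-preservation of the extensions keeps the re-derivation ``within the same bounded word-length regime'' is a non sequitur: not changing monotonicity bounds no word length.

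The paper's choice of split is exactly what dissolves this circularity. There the elementary word $w_0$ targets the bottom component $E_0$ and is fixed \emph{once and for all, before} Lemma \ref{le:Pom} is invoked; its effectiveness on $E_0$ after the prefix $s^N w_1$ has acted is secured not by re-deriving $w_0$ but by monotonicity and order-preservation: since $p_0$ does not change monotonicity of $p$ and is increasing on the initial segment of $E_0$, one can choose $N$ with $s^N w_1(p_0,q_0)(d_0)>d_0$, where $d_0=\min(\supp(p_0))$ --- and any larger $N$ still works --- after which $w_0$, being order-preserving where defined, sends the displaced points above its image of $d_0$, hence into the upward-closed final segment. With $w_0$ fixed, Lemma \ref{le:Pom} is then applied to the inductively handled part with $v=w_0$, returning some $N'\geq N$, with no feedback into $w_0$. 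Your argument could be repaired in the same spirit (fix $u$ for $E_n$ in advance and handle the displaced points $s^{N'}w[\zdef(q')\cap E_n]$ by this order-preservation argument rather than by re-derivation), but as written, the step resolving the crux of the inductive step is a genuine gap.
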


\begin{proof}
We prove the lemma by induction on the number $r$ of elementary components of $(p,q)$. For $r=1$, this follows from Lemma \ref{le:ElPair}. 
Suppose that that the lemma is true for some $r$, and fix a piecewise elementary pair $(p,q)$ with $r+1$ elementary components.

Let us write $E=E_0 \cup E_1$ so that $E_0 \leq E_1$, $(p,q)$ is elementary when restricted to $E_0$, and there are $r$ elementary components in $(p,q)$ when restricted to $E_1$. Using Lemma \ref{le:ElPair}, and the inductive assumption, we can fix an extension $(p',q')$ of $(p,q)$, so that, for $(p_0,q_0)$ denoting the restriction of $(p',q')$ to $E_0$, and $(p_1,q_1)$ denoting the restriction of $(p',q')$ to $E_1$, the following holds. The mapping $p_0$ does not change monotonicity of $p$ restricted to $E_0$, the pair $(p_1,q_1)$ is piecewise elementary, and there exist $w_0,w_1 \in F(s,t)$ such that $w_0(p_0,q_0)(x)$ is in the unique final segment of $(p_0,q_0)$ for $x \in \zdef(q_0)$, and $w_1(p_1,q_1)(x)$ is in a final segment of $(p_1,q_1)$ for $x \in \zdef(q_1)$. 

Let $d_0=\min(\supp(p_0))$. As $p_0$ does not change monotonicity of $p$ restricted to $E_0$, and so $p_0$ is increasing on the initial segment of $p_0$, in the case that $w_1(p_0,q_0)(d_0)<d_0$, we can assume that there exists $N \in \NN$ such that $s^N w_1(p_0,q_0)(d_0)>d_0$. Then $w_0 s^N w_1(p_0,q_0)(x)$ is in the final segment of $p_0$ for $x \in \zdef(q_0)$.

Moreover, applying Lemma \ref{le:Pom}, we can assume that $N$ is large enough so that $w_0s^N w_1(p_1,q_1)(x)$ is in a final segment of $p_1$ for $x \in \zdef(q_1)$. Thus, the pair $(p',q')$ extends $(p,q)$, and, $w(p',q')(x)$ is in a final segment of $p'$ for $x \in \zdef(q')$, if $w=w_0s^N w_1$.
\end{proof}

\begin{theorem}
Let $\f$ be a full order expansion with SAP, and such that $\f^-$ has liberating automorphisms. Let $M$ be the limit of $\f$. Then there are comeagerly many pairs in $\Aut(M)^2$ generating a non-discrete group.
\end{theorem}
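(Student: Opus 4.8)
The plan is to reduce the statement to a density assertion and prove that by a single construction that combines Lemma~\ref{le:PieceElPair} with a conjugation trick. Since the pointwise stabilizers $V_A=\{f\in\Aut(M):f\restriction A=\Id\}$ of finite sets $A\subseteq M$ form a neighbourhood basis at the identity consisting of open subgroups, a subgroup $\langle g,h\rangle\le\Aut(M)$ is non-discrete exactly when for every finite $A\subseteq M$ there is a nontrivial element of $\langle g,h\rangle$ fixing $A$ pointwise. Hence the set of pairs generating a non-discrete group is $\bigcap_A U_A$, where
\[ U_A=\{(g,h)\in\Aut(M)^2:\ \exists\,w\in F(s,t),\ w(g,h)\neq\Id\ \text{and}\ w(g,h)\restriction A=\Id\}. \]
For a fixed word $w$ each of the two conditions depends on only finitely many coordinates, so $U_A$ is open; as $M$ is countable there are only countably many finite $A$. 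By the Baire category theorem it therefore suffices to show that each $U_A$ is dense, and the conclusion will be a dense $G_\delta$. I will use throughout that, because $M$ carries an invariant linear order, neither automorphisms nor partial automorphisms of $M$ have nontrivial cyclic orbits, so the liberating-automorphisms hypothesis is always applicable.

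To establish density of $U_A$, fix a basic open set given by a pair $(p,q)\in\f_2$; enlarging $\dom(q)$ we may assume $A\subseteq\zdef(q)$, and we may assume $q\neq\Id$. First I would extend $(p,q)$ to a piecewise elementary pair: one partitions $\zdef(p)\cup\zdef(q)$ at its finitely many common fixed points and, using that $\f$ is a full order expansion with SAP, adjoins short increasing orbits just inside each boundary so that both maps become eventually increasing on each block and no new common fixed points are created. Applying Lemma~\ref{le:PieceElPair} yields an extension $(p_1,q_1)\supseteq(p,q)$ and a word $w\in F(s,t)$ such that, writing $W=w(p_1,q_1)$, the image $W(x)$ lies in a final segment of $(p_1,q_1)$ for every $x\in\zdef(q_1)$, in particular for every $a\in A$. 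Thus each point of $W[A]$ sits on an interval on which $p_1$ is increasing up to a common fixed point $y$ of $p_1$ and $q_1$; crucially, since $y$ is a common fixed point and $\supp(q_1)$ is finite, $\supp(q_1)$ is bounded strictly below $y$ on that side.

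The core is then a conjugation argument. Because $p_1$ is increasing toward $y$ while $\supp(q_1)$ stops below $y$, a sufficiently high power of $p_1$ carries $W[A]$ above $\supp(q_1)$, which removes all order obstructions to fixing the image by $q_1$; simultaneously, since $\f^-$ has liberating automorphisms and $\f$ is a full order expansion with SAP, one finds (exactly as in the proof of Lemma~\ref{le:Pom}) an integer $m$ and an extension $p'$ of $p_1$ such that $(p')^m[W[A]]$ is free from $q_1$ in $\f$. Extend $q_1$ to $q'$ so that $q'$ fixes $(p')^m[W[A]]$ pointwise, and pass to a total extension $(g,h)\in\Aut(M)^2$ of $(p',q')$ via ultrahomogeneity of $M$. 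Then $\sigma=(s^{-m}ts^m)(g,h)=g^{-m}hg^m$ fixes $W[A]$ pointwise, because $h$ fixes $g^m(W(a))$ for every $a\in A$, while $\sigma$ is a conjugate of $h\neq\Id$ and hence nontrivial. Setting
\[ v=w^{-1}\,s^{-m}\,t\,s^{m}\,w\in F(s,t), \]
we get $v(g,h)=W^{-1}\sigma W$, so $v(g,h)(a)=W^{-1}\sigma(W(a))=a$ for every $a\in A$, whereas $v(g,h)$ moves $W^{-1}\!\big(g^{-m}(b)\big)$ whenever $q_1$ moves $b$. Hence $(g,h)$ extends $(p,q)$ and lies in $U_A$, proving density.

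I expect the principal difficulty to be the freeness step of the third paragraph, where both hypotheses are genuinely needed: the order part of the freeness is obtained by pushing $W[A]$ past $\supp(q_1)$ inside the final segment (this is what forces us to route the argument through Lemma~\ref{le:PieceElPair} rather than fixing $A$ directly, since in the order class $\f$ itself one cannot freely prescribe $q$ on points locked by $q$), while the relational part comes from liberating automorphisms of $\f^-$ transferred through SAP. One must also carry along enough of the orbits so that every intermediate value of $v$ is defined within the finite pair, guaranteeing that $v(g,h)\restriction A=\Id$ is actually computed and that $v(g,h)\neq\Id$ persists to all further extensions. The reduction to piecewise elementary pairs and the verification of nontriviality are routine by comparison.
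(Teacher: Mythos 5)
Your proposal is correct and follows essentially the same route as the paper's own proof: reduce to a density statement via Baire category, pass to a piecewise elementary pair, apply Lemma~\ref{le:PieceElPair}, use liberating automorphisms together with SAP (as in Lemma~\ref{le:Pom}) to push the relevant image into a position free from $q$, extend $q$ by the identity there, and conclude with the conjugated word $w^{-1}s^{-m}ts^{m}w$ --- which is exactly the paper's word $(s^{N}w')^{-1}t(s^{N}w')$. The only differences are presentational: you spell out the reduction that the paper dismisses as ``easy to verify'' and make the nontriviality of the witnessing element explicit (as a conjugate of $h\neq\Id$), whereas the paper arranges the identity on all of $\zdef(q')$ rather than just on $W[A]$; these are inessential variations.
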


\begin{proof}
Consider the following condition: for every pair $(p,q)$ of partial automorphisms of $M$ there exists an extension $(p',q')$ of $(p,q)$, and $w \in F_2$ such that $w(p',q')(x)=x$ for $x \in \zdef(q)$. It is easy to verify that if it holds, then the set of pairs $(f,g) \in \Aut(M)^2$ generating a non-discrete group contains a dense $G_\delta$ subset of $\Aut(M)^2$, that is, it is comeager in $\Aut(M)^2$.

We verify this condition. Fix a pair $(p,q)$ of partial automorphisms of $M$. Without loss of generality, we can assume that it is piecewise elementary. By Lemma \ref{le:PieceElPair}, we can also assume that there exists $w' \in F(s,t)$ such that $w'(p,q)(x)$ is in a final segment of $p$ for $x \in \zdef(q)$. But then, using our assumptions on $\f$, and the fact that $p$ is eventually increasing on each elementary component of $(p,q)$, we can find $N \in \NN$, and an extension $(p',q')$ of $(p,q)$ such that
\begin{equation}
\label{eq:2}
x \leq s^{N}w(p',q')(y) \mbox{ if and only if } q(x) \leq s^{N}w(p',q')(y)
\end{equation}
for every $x,y \in \zdef(q')$, and $s^N w'(p',q')[\zdef(q')]$ is free from $q'$ in $\f$. Therefore we can put
\[ q'(s^N w(p',q')(x))=s^N w(p',q')(x) \]
for $x \in \zdef(q')$. Then for $w=(s^N w)^{-1}t(s^N w)$, we have $w(p',q')(x)=x$ for $x \in \zdef(q')$.

\end{proof}

\begin{theorem}
\label{th:MeagerSim}
Let $\f$ be a full order expansion with SAP, and such that $\f^-$ has liberating automorphisms. Let $M$ be the limit of $\f$. Then every  $2$-dimensional topological similarity class in $\Aut(M)$ is meager.
\end{theorem}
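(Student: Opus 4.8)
The plan is to combine the theorem immediately preceding this one --- that comeagerly many pairs in $\Aut(M)^2$ generate a non-discrete group --- with an argument that defeats a single fixed pair by means of a suitable convergent sequence of words. Fix $(f_0,g_0)\in\Aut(M)^2$ and let $C$ be its $2$-dimensional topological similarity class; I want to show $C$ is meager. The basic observation I would record first is that similarity is transported by the witnessing isomorphism: if $(f,g)\in C$ via a topological isomorphism $\phi\colon\overline{\langle f_0,g_0\rangle}\to\overline{\langle f,g\rangle}$ with $\phi(f_0)=f$ and $\phi(g_0)=g$, then $\phi(w(f_0,g_0))=w(f,g)$ for every $w\in F_2$, and since $\phi$ is a continuous group isomorphism fixing the identity, for any sequence of words $(u_n)$ one has $u_n(f_0,g_0)\to\mathrm{id}$ if and only if $u_n(f,g)\to\mathrm{id}$. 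In particular discreteness of the generated group is a similarity invariant.

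I would then split into two cases. If $\overline{\langle f_0,g_0\rangle}$ is discrete, then every $(f,g)\in C$ generates a discrete group as well, so $C$ is contained in the set of pairs generating a discrete group; by the preceding theorem this set is meager (its complement is comeager), and $C$ is meager. So the substantive case is when $\overline{\langle f_0,g_0\rangle}$ is non-discrete, and here I would pick a sequence of pairwise distinct nontrivial reduced words $(u_n)$ with $u_n(f_0,g_0)\to\mathrm{id}$. Fix a point $a\in M$. By the observation above, $C$ is contained in $\{(f,g):u_n(f,g)\to\mathrm{id}\}$, hence (taking the finite set $\{a\}$) in the set of pairs with $u_n(f,g)(a)=a$ for all but finitely many $n$. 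Thus it suffices to prove that
\[
\mathcal{D}=\{(f,g)\in\Aut(M)^2:u_n(f,g)(a)\neq a\ \text{for infinitely many }n\}
\]
is comeager, since then $C\subseteq\Aut(M)^2\setminus\mathcal{D}$ is meager.

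Writing $\mathcal{D}=\bigcap_m\bigcup_{n\geq m}\{(f,g):u_n(f,g)(a)\neq a\}$, each inner set is open, so the whole problem reduces to showing that $\bigcup_{n\geq m}\{(f,g):u_n(f,g)(a)\neq a\}$ is dense for every $m$: for every basic open set, determined by a finite partial pair $(p,q)$, I must find some $n\geq m$ and an extension of $(p,q)$ along which $u_n$ is defined at $a$ and moves it. This density is the main obstacle, and it is precisely where the structural hypotheses are used. I would exploit that $\f$ is a full order expansion with SAP together with the fact that $\f^-$ has liberating automorphisms, along the lines of the free placements produced in the proofs of Lemmas \ref{le:Pom} and \ref{le:PieceElPair}: liberating automorphisms let me extend $p$ so that the relevant iterates of its domain are free from $q$, while SAP and fullness provide room to realize the trajectory of $a$ under $u_n$ step by step. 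Concretely, reading the letters of $u_n$ from the right and sending the current point at each step to a fresh point placed above everything already used (possible because $(p,q)$ is finite, and freeness prevents the trajectory from being forced back onto $a$ or earlier points), I obtain a finite partial pair $(p',q')$ extending $(p,q)$ on which $u_n$ is everywhere defined along the trajectory and $u_n(p',q')(a)$ is a new point above $a$. By ultrahomogeneity $(p',q')$ extends to a genuine pair $(f,g)$, for which $u_n(f,g)(a)\neq a$. As there are infinitely many $n\geq m$ available, density follows, $\mathcal{D}$ is comeager, and the theorem is proved. The only delicate point in executing this is the bookkeeping that keeps each intermediate map an honest partial automorphism in the expanded signature, which is routine given SAP and the full order expansion.
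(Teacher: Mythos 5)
Your setup (transporting word-convergence through the witnessing isomorphism, and splitting into the discrete and non-discrete cases, with the discrete case absorbed into the meager complement of the set provided by the preceding theorem) is fine, but the non-discrete case contains a genuine error: the set $\mathcal{D}$ is \emph{never} comeager, for any choice of the point $a$. Indeed, $\mathcal{D}$ is disjoint from the nonempty open set $U_a=\{(f,g)\in\Aut(M)^2\colon f(a)=a \mbox{ and } g(a)=a\}$, because if $f$ and $g$ both fix $a$ then $u_n(f,g)(a)=a$ for \emph{every} word $u_n$. Since a nonempty open subset of a Polish space is non-meager, $\mathcal{D}^c$ is non-meager, and the containment $C\subseteq \mathcal{D}^c$ yields nothing. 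Your density argument breaks exactly at this point: for a condition $(p,q)$ with $p(a)=a=q(a)$ (more generally, for any condition that already determines the whole $u_n$-trajectory of $a$ and returns it to $a$), the trajectory never becomes undefined, so there is no step at which you may redirect it to a fresh point. Replacing the fixed $a$ by an existential quantifier over all points does not save the argument either: the complement of $\{(f,g)\colon u_n(f,g)\to\mathrm{id}\}$ requires a \emph{single} $x$ that is moved by $u_n(f,g)$ for infinitely many $n$, whereas the fresh-point construction only produces, for each $m$ separately, some point $x_m$ and some $n\geq m$ moving it; this quantifier swap is precisely the hard part and is not addressed.

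A useful sanity check shows that no argument of this shape can work without using non-precompactness: in $S_\infty$, comeagerly many pairs generate infinite precompact (hence non-discrete) subgroups, yet $S_\infty$ has ample generics, so some $2$-dimensional topological similarity class is comeager; your argument, which uses the order only to place fresh points "above everything" and never uses unboundedness of orbits, would apply essentially verbatim there and give a contradiction. The paper's own proof is entirely different and much shorter: by the preceding theorem, comeagerly many pairs generate a non-discrete group; since a non-trivial automorphism of an ordered structure has only infinite non-trivial orbits, these groups are moreover non-precompact; and then the external criterion of \cite[Theorem 4.4]{KwMa} converts this into meagerness of every $2$-dimensional topological similarity class. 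The Baire-category work that your sketch attempts to reproduce lives inside that cited theorem, and the unboundedness of orbits (non-precompactness) is exactly the ingredient it needs and your proposal omits.
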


\begin{proof}
By the above theorem, there are comeagerly many pairs in $\Aut(M)^2$ generating a non-discrete group. As automorphisms of order structures have only infinite non-trivial orbits, in fact, there are comeagerly many pairs in $\Aut(M)^2$ generating a non-discrete and non-precompact group. By \cite[Theorem 4.4]{KwMa}, every $2$-dimensional topological similarity class in $\Aut(M)$ is meager.
\end{proof}

Recall that a \fra class $\f$ has \emph{free amalgamation} if for every $A,B,C \in \f$ with $A \subseteq B,C$, the structure $D=B \cup C$ is an amalgam of $B$ and $C$ over $A$. In other words, no tuple in $D$ involving at the same time elements from $B \setminus A$ and from $C \setminus A$ is related in $D$. A typical example of a class with free amalgamation is the class of finite graphs.

\begin{lemma}
\label{le:FreeAm}
If $\f$ is a \fra class with free amalgamation or the class of finite tournaments, then $\f$ has liberating automorphisms.
\end{lemma}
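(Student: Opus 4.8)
The plan is to treat both cases uniformly by extending $p$ forward along its orbits one ``layer'' at a time, arranging that the far iterates $(p')^n[A]$ consist entirely of fresh points whose relationship to the old structure $A$ (and in particular to $\zdef(q)\subseteq A$) is completely \emph{uniform}; uniformity is exactly what forces coherence with the arbitrary partial automorphism $q$. First I would fix the combinatorics. Since $p$ has no cyclic orbits, every orbit is a finite chain $\{a^i_0,\ldots,a^i_{m_i}\}$ with $a^i_0\notin\rng(p)$ and $a^i_{m_i}\notin\dom(p)$; set $M=\max_i m_i$. I extend $p$ to $p'$ by adjoining, for each orbit and each $k\ge 1$, a new point $c^i_k$ with $p'(a^i_{m_i})=c^i_1$ and $p'(c^i_k)=c^i_{k+1}$, continuing for as many layers as are needed so that $(p')^{N'}$ is defined on $A$. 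Writing $A'_k=A\cup\{c^i_l:l\le k\}$, the structure on $A'_k$ is forced on every tuple that can be pulled back into $A$ via $p'$; the only undetermined data are the relations between a new point $c^i_l$ and an orbit bottom $a^{i'}_0$, since orbit bottoms are the only points outside $\rng(p')$.

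Next I would resolve this freedom and check that $A'_k\in\f$. Passing from $A'_{k-1}$ to $A'_k$ is precisely an amalgamation of $A'_{k-1}$ with the shifted copy $\rng(p'\restriction A'_{k-1})\cup\{c^i_k\}$ over their common part $\rng(p'\restriction A'_{k-1})$, and the undetermined relations are exactly those across the amalgamation seam. In the free-amalgamation case I take the free amalgam (no relation across the seam), so iterating keeps $A'_k\in\f$. In the tournament case free amalgamation is unavailable, but any orientation of the cross-arcs yields a tournament, so I orient every undetermined arc uniformly, declaring each new point to beat every orbit bottom; again $A'_k$ is a tournament, hence in $\f$. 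In both cases $p'$ is a partial automorphism extending $p$.

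Then I would verify the freeness. For $n>2M$ every element of $(p')^n[A]$ is a point $c^i_{\mathrm{idx}}$ with $\mathrm{idx}>M$, hence disjoint from $A\supseteq\zdef(q)$. For such an $x$ and any $w=a^{i'}_{j'}\in A$ (so $j'\le M<\mathrm{idx}$), pulling back $j'$ steps gives $R(x,w)\iff R(c^i_{\mathrm{idx}-j'},a^{i'}_0)$, a seam relation; an entirely analogous pullback reduces any mixed higher-arity tuple to one spanning the seam of some layer. By our uniform choice this seam value is ``no relation'' (free amalgamation) or ``$x$ beats $w$'' (tournaments), \emph{independently} of which point $w\in A$ was taken. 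Consequently $x$ relates to every point of $A$ in the same way, so for each $w\in\dom(q)$ we get $R(x,w)\iff R(x,q(w))$ and $R(w,x)\iff R(q(w),x)$; tuples lying entirely in $(p')^n[A]$ or entirely in $\dom(q)$ are trivial (the latter because $q$ is a partial automorphism). Thus $(p')^n[A]$ is free from $q$ for all $N\le n\le N'$ with $N=2M$, the required bound.

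The step I expect to be the main obstacle is the tournament case: because no relation can be made to vanish, coherence with an arbitrary $q$ — which may carry $w\in\dom(q)$ to a point $q(w)$ in a completely different $p$-orbit and at a different depth — is not automatic. The uniform orientation is exactly what forces $x$ to treat $w$ and $q(w)$ identically no matter how $q$ mixes orbits. The secondary technical point is confirming, in the free-amalgamation case, that the iterated free amalgam kills mixed relations of \emph{every} arity, which follows by locating the amalgamation step at which a given mixed tuple first spans the seam.
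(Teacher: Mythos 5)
Your proposal is correct and follows essentially the same route as the paper: both build the extension layer by layer, realizing each step as an amalgamation of the current structure with its shifted copy over their overlap, and resolve the seam relations uniformly (freely in the free-amalgamation case, by a fixed orientation of cross-arcs in the tournament case), so that far iterates of $A$ relate to $A$ in a completely uniform way and are therefore free from $q$. The only differences are cosmetic: you orient the tournament arcs from new points to orbit bottoms where the paper orients them the other way, and your threshold should be $N=2M+1$ rather than $2M$ (your own verification needs $n>2M$), which is harmless since the definition only asks for some $N$.
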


\begin{proof}
Suppose that $\f$ has free amalgamation. Let $(p,q)$ be a pair of partial automorphisms of $A \in \f$ with no cyclic orbits. Without loss of generality, we can assume that $A=\zdef(p)$. Let $N \in \NN$ be such that orbits in $p$ have size at most $N/2$. Set $C=\dom(p) \setminus \rng(p)$, and fix $N'>N$. We put $p_0=p$, and construct partial automorphisms $p_i$ and sets $D_i$, $0<i \leq N'$ in the following manner. Assuming that $p_i$ is already constructed, with an aid of free amalgamation, we extend $p_i$ to a partial automorphism $p_{i+1}$ by defining it on $D'=\rng(p_i) \setminus \dom(p_i)$ in such a way that no relation involves at the same time elements from $C$ and $D=p_{i+1}[D']$ (where $D$ is disjoint from $\zdef(p_i)$.) To be more precise, put $B =\rng(p_i)$. Then, for every relation $R$ of arity $n$, and every $n$-tuple $\bar{b}$ in $B \cup D$, whether $R(\bar{a})$ holds or not, is entirely determined by the requirement that $p_{i+1}$ is supposed to be a partial automorphism.  Moreover, regardless of how we amalgamate $B \cup C$ and $B \cup D$ over $B$, to get a structure $E$ with underlying set $B \cup C \cup D$, $p_{i+1}$ will be a partial automorphism of $E$. Thus, $E$ obtained by freely amalgamating these structures works as $\zdef(p_{i+1})$. Finally, we put $D_{i+1}=D$.

Observe that no relation involves at the same time elements from $p_{N'}^{n}[A]$ and $A$ for $N \leq n \leq N'$, which means, because $\zdef(q) \subseteq A$, that each $p_{N'}^{n}[A]$ is free from $q$.  Indeed, by the construction of $p_i$, for $i>0$ and $x \in D_i$, no relation involves $x$ and elements from $C$. And then the same is true about any $p^n_{i+1}(x)$ and $p^i_{i+1}[C]$, $i \leq n \leq N'$.
As $p_{N'}^n[A] \subseteq \bigcup_{N \leq i \leq N'} D_i$, and $A \subseteq \bigcup_{i \leq N} p^i[C]$, this means that no relation involves at the same time elements from $p_{N'}^{n}[A]$ and $A$, for $N \leq n \leq N'$.

For finite tournaments, we proceed almost exactly as above. The only difference is that for every $x \in \dom(p_i) \setminus \rng(p_i)$, $y \in \rng(p_i) \setminus \dom(p_i)$, we choose $(x,y)$ as the arrow between $x$ and $y$. Then $(x,y)$ is an arrow for every $x \in \zdef(q)$ and $y \in p_{N'}^{n}[\zdef(q)]$, $N \leq n \leq N'$.
\end{proof}
 
\begin{corollary}
\label{th:MeagerSimFree}
Suppose that $\f$ is a full order expansion such that $\f^-$ has free amalgamation, or the class of finite ordered tournaments. Let $M$ be the \fra limit of $\f$. Then every  $2$-dimensional topological similarity class in $\Aut(M)$ is meager.
\end{corollary}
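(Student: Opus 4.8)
The plan is to deduce the corollary directly from Theorem \ref{th:MeagerSim} together with Lemma \ref{le:FreeAm}. Theorem \ref{th:MeagerSim} requires two hypotheses on $\f$: that it is a full order expansion with SAP, and that its reduct $\f^-$ has liberating automorphisms. The latter is immediate: by assumption $\f^-$ either has free amalgamation or is the class of finite tournaments, and in both cases Lemma \ref{le:FreeAm} supplies liberating automorphisms. So the only genuine task is to verify that $\f$ has SAP.

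For this I would first record that $\f^-$ itself has SAP. If $\f^-$ has free amalgamation, then for $A \subseteq B, C$ one forms $D=B \cup C$ with $B \cap C=A$ and no relation linking $B \setminus A$ to $C \setminus A$; this is a strong amalgam. If $\f^-$ is the class of finite tournaments, one again keeps $B \cap C = A$ and simply orients the arrows between $B \setminus A$ and $C \setminus A$ in an arbitrary way, again a strong amalgam. (Tournaments do not have free amalgamation, which is exactly why Lemma \ref{le:FreeAm} handles them as a separate case, but they do admit strong amalgamation.)

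Next I would lift SAP from $\f^-$ to the full order expansion $\f$. Given $(A,<) \subseteq (B,<), (C,<)$ in $\f$, strongly amalgamate the reducts in $\f^-$ to obtain $D$ with $B \cap C=A$; it then remains to equip $D$ with a linear order extending both $<\rest B$ and $<\rest C$ and agreeing with $<$ on $A$. Since the two orders coincide on $A$, each element of $B \setminus A$ and of $C \setminus A$ lies in one of the convex gaps determined by $A$, and the same gaps arise from both orders. Within each gap one places all the $B$-elements (in their $<\rest B$ order) before all the $C$-elements (in their $<\rest C$ order), and orders the gaps according to $A$. This is a linear order restricting to $<\rest B$ and to $<\rest C$, so $(D,<)$ is a strong amalgam; and since $\f$ is a full order expansion, $(D,<) \in \f$. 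Hence $\f$ has SAP.

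With both hypotheses of Theorem \ref{th:MeagerSim} verified, that theorem yields that every $2$-dimensional topological similarity class in $\Aut(M)$ is meager, which is the assertion. I do not expect a real obstacle here: the corollary is essentially an application of Theorem \ref{th:MeagerSim} and Lemma \ref{le:FreeAm}, and the only step needing any care—the construction of a common linear extension in the full order expansion—is routine.
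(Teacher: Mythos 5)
Your proposal is correct and follows exactly the route the paper intends: Corollary \ref{th:MeagerSimFree} is stated as an immediate consequence of Theorem \ref{th:MeagerSim} together with Lemma \ref{le:FreeAm}, which is what you do. Your additional verification that $\f$ has SAP (strong amalgamation for $\f^-$ in both cases, then the routine lifting of SAP to the full order expansion) is sound and fills in a step the paper leaves implicit.
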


\begin{corollary}
Suppose that $\f$ is a full order expansion of a class with free amalgamation, and let $M$ be the \fra limit of $\f$. Then every  $2$-dimensional topological similarity class in $\Aut(M)$ is meager.
\end{corollary}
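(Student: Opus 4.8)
The plan is to reduce this to Theorem~\ref{th:MeagerSim}, whose conclusion is exactly what we want, by checking its two hypotheses for $\f$: that $\f$ is a full order expansion with SAP, and that $\f^-$ has liberating automorphisms. The second hypothesis is immediate, since $\f^-$ is assumed to have free amalgamation and Lemma~\ref{le:FreeAm} then supplies liberating automorphisms. So the only real work is to confirm that $\f$ has SAP.

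First I would recall that free amalgamation is a special case of strong amalgamation: in the free amalgam $D=B\cup C$ of $B,C$ over $A$ the underlying sets meet exactly in $A$, so $\f^-$ has SAP. Next I would lift SAP to the full order expansion $\f=\f^-*\LO$. Given $(A,<_A)\subseteq(B,<_B),(C,<_C)$ in $\f$, I would take the free amalgam $D^-$ of the reducts over $A^-$ (underlying set $B\cup C$, with $B\cap C=A$) and then merge the orders: as $<_B$ and $<_C$ both extend $<_A$ and therefore agree on the intersection $A$, and as finite linear orders amalgamate strongly, there is a linear order $<_D$ on $B\cup C$ extending both. Then $(D^-,<_D)\in\f$ is the desired strong amalgam of $(B,<_B)$ and $(C,<_C)$ over $(A,<_A)$.

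Having verified both hypotheses, I would simply invoke Theorem~\ref{th:MeagerSim} to conclude that every $2$-dimensional topological similarity class in $\Aut(M)$ is meager; indeed this is precisely the free-amalgamation instance of Corollary~\ref{th:MeagerSimFree}. There is essentially no obstacle here: the only non-formal point is the SAP check for the order expansion, and that rests on the elementary fact that two linear orders which agree on a common substructure can always be amalgamated into one.
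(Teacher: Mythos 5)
Your proposal is correct and follows essentially the same route as the paper: the corollary is the free-amalgamation instance of Corollary~\ref{th:MeagerSimFree}, obtained by combining Lemma~\ref{le:FreeAm} (free amalgamation gives liberating automorphisms) with Theorem~\ref{th:MeagerSim}. Your explicit verification that SAP lifts from $\f^-$ to the full order expansion (via strong amalgamation of linear orders agreeing on the common part) is a routine detail the paper leaves implicit, and you handle it correctly.
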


\begin{remark}
Compare the above corollary with Theorem \ref{th:NoComeager1} and Corollary \ref{indep}.
\end{remark}

\end{document}